\documentclass[ a4paper,10pt]{amsart}

\usepackage[foot]{amsaddr}

\usepackage[T1]{fontenc}

\usepackage[utf8]{inputenc}

\usepackage{lmodern}

\usepackage[english]{babel}

\usepackage{amsmath}
\makeatletter
\def\maketag@@@#1{\hbox{\m@th\normalfont\normalsize#1}}
\makeatother
\usepackage{amssymb}
\usepackage[alphabetic]{amsrefs}
\usepackage{mathtools}

\usepackage{mathrsfs}

\usepackage{xypic}

\usepackage{tikz-cd}

\usepackage{bookmark}

\usepackage{hyperref} 
\hypersetup{
	colorlinks=true,
	linkcolor=blue,
	filecolor=magenta,      
	urlcolor=cyan,
}

\usepackage{cleveref}

\overfullrule=10pt

\usepackage{enumitem}
\usepackage{graphicx}
\usepackage{tikz}
\usepackage{tikz-cd}
\usetikzlibrary{matrix,arrows,decorations.pathmorphing}
\usepackage[all,cmtip]{xy}

\setcounter{tocdepth}{1}

\newtheorem*{thm-no-num}{Theorem}
\newtheorem*{cor-no-num}{Corollary}
\newtheorem{thm}{Theorem} [section]

\newtheorem{prop}[thm]{Proposition} 
\newtheorem{lm}[thm]{Lemma} 
\newtheorem{cor}[thm]{Corollary} 
\theoremstyle{definition}
\newtheorem*{question}{Question}
\newtheorem*{df-no-num}{Definition}
\newtheorem {df}[thm]{Definition}
\theoremstyle{remark} 
\newtheorem{rmk}[thm]{Remark}
\newtheorem{ex}[thm]{Example}

\newcommand{\PP}{\mathbb{P}}
\newcommand{\ZZ}{\mathbb{Z}}

\newcommand{\cl}[1]{\mathcal{#1}}
\newcommand{\sk}[1]{\mathscr{#1}}
\newcommand*{\sheafhom}{\mathcal{H}\kern -.5pt om}

\newcommand{\sA}{\sk{A}}

\newcommand{\sB}{\sk{B}}

\newcommand{\sC}{\sk{C}}

\newcommand{\sDP}{\sk{DP}}
\newcommand{\cE}{\mathcal{E}}
\newcommand{\sfE}{\mathsf{E}}
\newcommand{\sfH}{\mathsf{H}}

\newcommand{\sM}{\sk{M}}
\newcommand{\Ocal}{\cl{O}}

\newcommand{\Hcal}{\cl{H}}
\newcommand{\sH}{\sk{H}}
\renewcommand{\cL}{\cl{L}}
\newcommand{\Xcal}{\sk{X}}
\newcommand{\Ycal}{\cl{Y}}

\newcommand{\sS}{\sk{S}}
\newcommand{\sX}{\sk{X}}

\newcommand{\sU}{\sk{U}}

\newcommand{\Gm}{\mathbb{G}_{\rm m}}

\newcommand{\GLt}{\textnormal{GL}_3}

\newcommand{\Sp}{\mathrm{Sp}}
\newcommand{\Sm}{\mathrm{S}}
\newcommand{\bmu}{\mathbf{\mu}}
\renewcommand{\O}{\mathrm{O}}
\newcommand{\bfk}{\mathbf{k}}

\renewcommand{\H}{{\rm H}}
\newcommand{\K}{{\rm K}}
\newcommand{\M}{{\rm M}}

\newcommand{\Ar}{{\rm A}}

\newcommand{\Inv}{{\rm Inv}^{\bullet}}
\newcommand{\Spec}{{\rm Spec}}

\begin{document}
\title[Cohomological invariants of $\sM_{3,n}$ via level structures]
{Cohomological invariants of $\sM_{3,n}$ via level structures} 

\author{A. Di Lorenzo}
\email{andrea.dilorenzo@unipi.it}  
\address{Università di Pisa \\ Dipartimento di matematica \\ Largo Bruno Pontecorvo 5, 56127 \\ Pisa, Italy}
\begin{abstract}
We show that mod $2$ cohomological invariants of the moduli stack $\sM_{3,n}$ of smooth pointed curves of genus three contain a free module with generators in degree $0$, $2$, $3$, $4$ and $6$, formed by the invariants of the symplectic group $\Sp_6(2)$. We achieve this by showing that the torsor of full level two structures $\sM_{3,n}(2) \to \sM_{3,n}$ is versal. 

Along the way, we prove that the invariants of the stack of del Pezzo surfaces of degree two contain the invariants of the Weyl group $W(\sfE_7)$ and that the mod $2$ cohomology of $\sM_{3,n}$ is non-zero in degree three. Our main result holds also for the stack $\sA_3$ of principally polarized abelian threefolds.
\end{abstract}
\maketitle
\section*{Introduction}
Cohomological invariants were introduced by Serre \cite{GMS} as a tool for studying algebraic groups over a field, and their definition is inspired by characteristic classes in topology.

Pirisi extended this notion to the realm of algebraic stacks \cite{Pir}, generalizing both 
the cohomological invariants à la Serre and unramified cohomology à la Colliot-Th\'{e}lène--Ojanguren \cite{CTO}.

Moduli of curves are among the most well studied algebraic stacks, and their topological and algebraic invariants, e.g. cohomology groups and Chow ring, sometimes have beautiful hidden structures. One can ask the following.
\begin{question}
    What are the cohomological invariants of $\sM_{g,n}$, the moduli stack of $n$-pointed smooth curves of genus $g$? Is there any underlying structure?
\end{question}
Complete results have been obtained so far only for $\sM_{1,1}$ \cites{Pir, DLP-pos} and  $\sM_2$ \cites{Pir-even, DLP}. This paper focuses on the next case of interest, that is $g=3$. Denote $\Sp_6(2)$ the group of invertible symplectomorphisms of a $\ZZ/2$-vector space of rank six. Recall that the moduli stack $\sM_{3,n}(2)$ of curves with a full level two structure is an $\Sp_6(2)$-torsor over $\sM_{3,n}$, thus inducing a morphism $\sM_{3,n} \to \sB\Sp_6(2)$. Denote $\bfk$ the base field.
\begin{thm-no-num}[\Cref{thm:inj}, \Cref{cor:inv Sp6}]
    Assume that $\operatorname{char}(\bfk)\neq 2$ and let $\M$ be a cycle module annihilated by $2$.  Then
    \[\Inv(\sM_{3,n}, \M) \supset \M^{\bullet}(\bfk)\langle 1, \widetilde{w}_2, f_3,\widetilde{w}_4, \widetilde{w}_6 \rangle, \]
    where the free $\M^{\bullet}(\bfk)$-module on the right is formed by the invariants of $\sB\Sp_6(2)$, pulled back through the classifying morphism $\sM_{3,n} \to \sB \Sp_6(2)$. 
    The same statement holds for the moduli stack $\sA_3$ of principally polarized abelian threefolds.
\end{thm-no-num}
The invariants $\widetilde{w}_i$ come from the Galois-Stiefel-Whitney classes of $\sB \O_8$, the classifying stack of quadratic forms of rank eight (see \Cref{rmk:origin}), and the computation of $\Inv(\sB \Sp_6(2),\M)$ easily follows from previous works of Hirsch and Serre \cites{Hir, Ser}. As a simple application of our theorem, we deduce that mod $2$ cohomology of $\sM_{3,n}$ does not vanish in degree three, contrary to what happens for rational or $\ell$-adic cohomology. Suppose that $\bfk$ is algebraically closed. 
\begin{cor-no-num}[\Cref{thm:degree three}]
    We have $\dim \H_{\text{\'{e}t}}^3(\sM_{3,n},\ZZ/2)\geq 2$.
\end{cor-no-num}
A result that we prove along the way, which might be of some independent interest, is the following.
\begin{thm-no-num}[\Cref{cor:inj m31}]
    Let $\sDP_2$ be the stack of del Pezzo surfaces of degree two. Then the cohomological invariants of $\sDP_2$ contains the cohomological invariants of $\sB W(\sfE_7)$ as a submodule.
\end{thm-no-num}
\subsection*{Strategy of proof}
The strategy of proof for our main theorem is pretty simple: we show that $\sM_{3,n}(2) \to \sM_{3,n}$ is a versal $\Sp_6(2)$-torsor. 

In a nutshell: following a construction of Looijenga for the respective coarse moduli spaces \cite{Loo}, we consider the moduli stack $\sDP_{2,\rm ord}$ of degree two del Pezzo surfaces with an ordinary point. As every del Pezzo surface of degree two is a double cover of $\PP^2$ ramified over a smooth quartic, this stack maps to $\sM_{3,1}$. 

By decorating del Pezzo surfaces with a so called geometric marking and some more data, we construct a $W(\sfE_7)\times \bmu_2$-torsor over $\sDP_{2,\rm ord}$. The total space of this torsor is an open subset of a split torus, which fact is key for proving the versality, basically because twisted tori are unirational. We go back to $\sM_{3,1}(2)$ by leveraging well-known relations between geometric markings of del Pezzo surfaces and level two structures. 

The extension to $\sM_{3,n}$ follows by a simple argument that involves the embedding of $\sM_{3,1}$ in $\sM_{3,n}^{\rm ct}$, the stack of curves of compact type.

\subsection*{Relation with other works}
First, let us remark that, in hindsight, the same technique works for proving that $\Inv(\sM_{2,n})$ contains the cohomological invariants of $\sB S_6$, and that $\Inv(\sM_{1,n})$ contains the invariants of $\sB S_2$, thus giving another proof and extending previous results of Pirisi and the author \cites{Pir, DLP}.

Another source of cohomological invariants for $\sM_{g}$ that has been considered before is the $S_{2^g(2^{g-1}-1)}$-torsor associated to the \'{e}tale cover $\sS_g^- \to \sM_{g}$ given by odd spin curves; this induces a map $\sM_g \to \sB S_{2^g(2^{g-1}-1)}$ \cites{DLP2, JPP}, and one might wonder if the pullback of invariants along this map differ from the ones obtained via level structure. 

We show (see \Cref{prop:no new invs}) that this is not the case, i.e. there is a factorization $$\Inv(\sB S_{2^g(2^{g-1}-1)}) \longrightarrow \Inv(\sB\Sp_{2g}(2)) \longrightarrow \Inv(\sM_g).$$ 
For the $g=3$ case, we outline an approach that shows how the classes obtained via theta characteristics are related to the ones obtained via level structues. We also suggest that the same approach could provide a generalization of the main results of Jaramillo-Puentes and Pirisi, although the computations might easily become too hard to perform (see \Cref{rmk:rel hyp}). 
\subsection*{Structure of the paper}
The paper is divided into three sections. \Cref{sec:coh} covers the basics of the theory of cohomological invariants. In \Cref{sec:dp} we collect definitions and results involving del Pezzo surfaces, with a focus on the degree two case. \Cref{sec:curves} contains the proof of the main theorem and its applications.
\subsection*{Notation and assumptions}
We work over a base field $\bfk$ of characteristic not $2$, so every scheme and stack lives over $\Spec(\bfk)$, also when not explicitly mentioned.
    
    A prime denoted $\ell$ is always assumed to be prime with the characteristic of the base field. 
    
    We use the notation $\ZZ/n(i)=\mu_n^{\otimes i}$ for $i>0$, where $\mu_n$ is the group of roots of unity, and $\ZZ/n(-i)=\operatorname{Hom}(\ZZ/n(i),\Gm)$ for $i<0$. When dealing with these Galois modules, we always assume that $n$ is invertible in $\bfk$.
    
    We use the notation $\M^{\bullet}$ to indicate that an object is graded. In other terms, the notation $\M^{\bullet}$ stands for $\oplus_{n\in \ZZ} \M^n$, with $\M^n$ the graded pieces. The symbol $\M$ will always denote a torsion cycle module in the sense of Rost \cite{Ros}*{Definition 2.1}, whose torsion order we assume to be prime with the characteristic of the base fields.

    We denote $\Sp_{2g}(2)$ the group of invertible symplectomorphism of a $\ZZ/2$-vector space of rank $2g$.
\subsection*{Acknowledgments}
We warmly thank Roberto Pirisi and Angelo Vistoli, who were the first ones to suggest looking at level structures as a source of cohomological invariants for $\sM_{g,n}$. We also thank Davide Lombardo and Dan Petersen for fruitful exchanges.
\section{Cohomological invariants}\label{sec:coh}
\subsection{Basic definitions}
Let $\M=\M^{\bullet}$ be a cycle module in the sense of Rost \cite{Ros}*{Definition 2.1}. This is a functor on the category $({\rm Field}/\bfk)$ of field extensions.
\begin{ex}
    Examples of cycle modules to have in mind are:
    \begin{enumerate}
        \item Milnor K-theory, denoted $\K^{\rm Mil}$, and its mod $\ell$ version, denoted $\K^{\rm Mil}/\ell$;
        \item \'{E}tale cohomology $\H_{D}$ with coefficients in a torsion Galois module $D$ whose torsion orders do not divide char($\bfk$), e.g. $D=\ZZ/n(i)$. More precisely, we have $\H^{\bullet}_D(-)=\H^{\bullet}_{\text{ \'{e}t}}(-,D(\bullet))$.
    \end{enumerate}
\end{ex}
Recall that a DVR $(R,\nu)$ with fraction field $F$ and residue field $\bfk(\nu)$ is called \emph{geometric} when ${\rm tr.deg}_{\bfk}(F)={\rm tr.deg}_{\bfk}(\bfk(\nu))+1$, in which case for any cycle module $\M$ there is a well defined restriction map $\M^{\bullet}(F) \to \M^{\bullet}(\bfk(\nu))$.

Let $\Xcal$ be an algebraic stack over $\bfk$, and let $P_{\Xcal}$ be the restriction of the functor of points of $\Xcal$ over the category $({\rm Field}/\bfk)$.  
\begin{df}[\cite{Pir}*{Definition 2.2},\cite{DLP}*{Definition 2.3}]
    A \emph{cohomological invariant} $\alpha$ of $\Xcal$ with coefficients in $\M$ is a natural transformation $P_{\Xcal} \longrightarrow \M^{\bullet}$ satisfying the following condition:
    \begin{itemize}
        \item[($\star$)] for any geometric DVR $(R,\nu)$ and any object $\xi \in \Xcal(R)$, we have that $\alpha(\xi_F)|_{\bfk(\nu)}=\alpha(\xi_{\bfk(\nu)})$.
    \end{itemize}
    A cohomological invariant has \emph{cohomological degree} $d$ if it takes values in the graded piece $\M^{d}$. The graded ring of cohomological invariants is denoted $\Inv(\Xcal,\M)$ and sometimes shortened into $\Inv(\Xcal)$.
\end{df}
\begin{rmk}
 For $G$ a smooth algebraic group over $\bfk$, the condition $(\star)$ is always verified, and the cohomological invariants of the classifying stack $\sB G$ coincide with the cohomological invariants of the algebraic group $G$ defined by Serre \cite{GMS}*{Definition 1.1}.
\end{rmk}
\begin{rmk}\label{rmk:smNis}
For a smooth scheme $X$ over $\bfk$, let $\Hcal^d_{\ZZ/n(i)}$ be the sheafification (in the Zariski topology) of the presheaf
\[ U \longmapsto \H_{\text{\'{e}t}}^{d}(U,\ZZ/n(d+i)) \]
where $\ZZ/n(i)=\mu_n^{\otimes i}$ for $i\geq 0$, and $\ZZ/n(-1)=\ZZ/n$.
One way to introduce unramified cohomology of schemes over a field $\bfk$ with coefficients in $\ZZ/n(i)$ is then as follows: 
\[ \H_{nr}^{d}(X,\ZZ/n(i)):= \H^0_{\rm Zar}(X,\Hcal^d_{\ZZ/n(i)}).\]
The above definition should be changed in order to work for smooth algebraic stacks, because Zariski topology of stacks is too coarse. 

Assuming that the base field $\bfk$ is infinite and that the algebraic stack has affine stabilizers, the right topology turns out to be the smooth-Nisnevich one \cite{Pir}*{\S 3}: a smooth-Nisnevich morphism $f:\Xcal\to\Ycal$ is a representable morphism of algebraic stacks which is smooth and such that for every field $K$ every $K$ point of $\Ycal$ admits an isomorphic $K$-point which lifts to a $K$-point of $\Xcal$. For finite fields, one needs to use $m$-smooth-Nisnevich coverings, instead of ordinary ones.

Let $\Xcal$ be a smooth algebraic stack over $\bfk$ with affine stabilizers. Then one can define
\[ \H_{nr}^d(\Xcal,\ZZ/n(i)):=\H^0_{\text{sm-N}}(\Xcal,\Hcal^d_{\ZZ/n(i)}), \]
and we have $\Inv(\Xcal,\ZZ/n(i))=\H_{nr}^{\bullet}(\Xcal,\ZZ/n(i))$.
In this sense, the notion of cohomological invariants ties together the original definition of cohomological invariants of algebraic groups à la Serre, and unramified cohomology. Observe also that cohomological invariants form a sheaf in the smooth-Nisnevich topology.

Contrary to what happens for schemes, unramified cohomology groups of stacks do not form a birational invariant. A simple counterexample is given by the classifying stack $\sB\mu_2$, which has non-trivial cohomological invariants. 
\end{rmk}
\subsection{Versal torsors}
Versal torsors are, loosely speaking, good enough approximations of universal torsors.
\begin{df}
    A $G$-torsor $P\to \sX$ over an algebraic stack $\sX$ is \emph{versal} if for any infinite field $F\supset\bfk$, any $G$-torsor $Q\to\Spec(F)$ and any dense open substack $U\subset \sX$, there exists a morphism $f\colon \Spec(F) \to U$ such that $\Spec(F)\times_{\sX} P \simeq Q$.
\end{df} 
We care about versal torsors because of the following lemma, which is well known.
\begin{lm}\label{lm:versal implies injective}
    Let $\sX$ be an algebraic stack, and $f\colon \sX \to \sB G$ a morphism whose associated $G$-torsor $P\to \sX$ is versal. Then the pullback homomorphism $$f^*\colon \Inv(\sB G) \longrightarrow \Inv (\sX)$$ is injective. 
\end{lm}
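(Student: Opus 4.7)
The plan is to show the contrapositive: if $\alpha \in \Inv(\sB G, \M)$ satisfies $f^*\alpha = 0$, then $\alpha$ vanishes on every $G$-torsor over every field extension of $\bfk$, and therefore $\alpha = 0$. Recall that since $\sB G$ classifies $G$-torsors, an element $\alpha \in \Inv(\sB G,\M)$ is a natural assignment, to each $G$-torsor $Q \to \Spec(F)$, of a class $\alpha(Q) \in \M^\bullet(F)$, compatible with field extensions and with specialization along geometric DVRs.

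First I would handle infinite fields. Let $F \supset \bfk$ be an infinite field and let $Q \to \Spec(F)$ be any $G$-torsor. By the hypothesis that $P \to \sX$ is versal, one can find a morphism $\phi \colon \Spec(F) \to \sX$ (one may even restrict to any chosen dense open substack) with $\phi^* P \simeq Q$. Functoriality gives
\[
\alpha(Q) \;=\; \alpha(\phi^*P) \;=\; \phi^*\bigl(f^*\alpha\bigr)(\Spec(F)) \;=\; 0,
\]
since $f^*\alpha = 0$ by assumption. Thus $\alpha$ vanishes on every torsor over every infinite field containing $\bfk$.

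The remaining step is to extend this vanishing to finite fields, which is where the axiom $(\star)$ is needed. Given a finite residue field $k \supset \bfk$ and a $G$-torsor $Q \to \Spec(k)$, consider the geometric DVR $R = k[t]_{(t)}$, with fraction field $k(t)$ (infinite) and residue field $k$, together with the $G$-torsor $Q_R \to \Spec(R)$ obtained by base change from $Q$. Its generic fiber $Q_{k(t)}$ satisfies $\alpha(Q_{k(t)}) = 0$ by the previous paragraph, and condition $(\star)$ applied to the object $Q_R \in (\sB G)(R)$ gives
\[
\alpha(Q) \;=\; \alpha(Q_{R,k}) \;=\; \alpha(Q_{k(t)})\bigl|_{k} \;=\; 0.
\]
Hence $\alpha$ vanishes on all torsors over all fields, i.e.\ $\alpha = 0$, proving injectivity of $f^*$.

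The argument is structurally short; the only point that requires a little care is the passage from infinite to finite fields, since the versality hypothesis is formulated only over infinite fields. This is the expected main (though mild) obstacle, and is precisely what the DVR compatibility axiom $(\star)$ is designed to overcome.
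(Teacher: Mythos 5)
Your proof is correct and follows essentially the same route as the paper's: given $f^*\alpha=0$, use versality to lift the classifying morphism of a torsor over a field through a point of $\sX$ and conclude by functoriality. The one genuine addition is your treatment of torsors over finite fields via the geometric DVR $k[t]_{(t)}$ and axiom $(\star)$; the paper's proof applies versality directly to ``any field $F$'' even though its definition of versality is stated only for infinite fields, so your extra specialization step correctly fills that (mild) gap rather than deviating from the argument.
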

\begin{proof}
    Suppose that $f^*\alpha=0$. We want to show that, for any field $F$ and any $G$-torsor $Q\to\Spec(F)$, we have $\alpha(Q\to\Spec(F))=0$. By definition of versality of $P\to\sX$, the classifying morphism $g\colon \Spec(F) \to \sB G$ factors through $h\colon\Spec(F)\to \sX$. We deduce that $\alpha(Q\to\Spec (F))=g^*\alpha=h^*(f^*\alpha)=0$. 
\end{proof}
\subsection{Relation with cohomology}
Given an element $\alpha$ in $\H_{\text{\'{e}t}}^d(\Xcal,\ZZ/n(i))$, we can define a cohomological invariant of degree $d$ with coefficient in $\H_{\ZZ/n(i)}$ as
\[ (\xi\colon\Spec(K)\to \Xcal) \longmapsto \xi^*\alpha.\]
In this way we obtain a homomorphism
\begin{equation}\label{eq:neg} \H_{\text{\'{e}t}}^\bullet(\Xcal,\ZZ/n(i)) \longrightarrow \Inv(\Xcal,\H_{\ZZ/n(i)}).\end{equation}
\begin{df}
    We say that a cohomological invariant \emph{comes from cohomology} if it belongs to the image of \eqref{eq:neg}.
\end{df}
\begin{rmk}
    The negligible cohomology of an algebraic group $G$ \cite{GMS}*{\S 26} is the kernel of \eqref{eq:neg} when $\Xcal=\sB G$. When $\bfk$ contains an $n^{\rm th}$ primitive root of the unity, the cycle classes of algebraic cycles mod $n$ are negligible.
\end{rmk}
Assume that $\bfk$ is algebraically closed, and let $\operatorname{CH}^d(-)$ denote the integral Chow group of degree $d$. For simplicity, we further assume that either
\begin{enumerate}
    \item $X$ is a quasi-projective scheme and the $G$-action is linearized, or
    \item $G$ is connected and $X$ is equivariantly embedded in a normal variety, or
    \item $G$ is special, i.e. every $G$-torsor is Zariski-locally trivial.
\end{enumerate}
These assumptions are made so that for $U$ a scheme endowed with a free $G$-action, the quotient $X\times U/G$ is a scheme \cite{EG}*{Proposition 23}; dropping the assumptions above would make us work with algebraic spaces, and the results used below are not stated for algebraic spaces.
\begin{prop}[\cite{Gui}*{Corollary 5.1.6}] \label{prop:exact seq 3}
    Let $X$ be a smooth scheme and $G$ an algebraic group satisfying the assumptions above, and set $\sX:=[X/G]$. Then the following is exact
    \[ \H_{\text{\'{e}t}}^3(\Xcal,\ZZ/n) \longrightarrow \operatorname{Inv}^3(\Xcal,\H_{\ZZ/n})\longrightarrow\operatorname{CH}^2(\Xcal)\otimes\ZZ/n\overset{{\rm cl}}{\longrightarrow} \H_{\text{\'{e}t}}^4(\Xcal,\ZZ/n), \]
    where the first morphism is \eqref{eq:neg} and the last morphism is the cycle class map.
\end{prop}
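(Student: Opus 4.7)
The statement is classical in the non-equivariant setting and should be deduced from the Bloch--Ogus spectral sequence together with the Merkurjev--Suslin/Bloch formula. The plan is first to prove the sequence when $\Xcal = X$ is a smooth scheme and then reduce the stacky case to that by a Totaro/Edidin--Graham approximation, which is exactly the reason the assumptions (1)--(3) are imposed.

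First, for a smooth scheme $X$ over $\bfk$, I would start from the Bloch--Ogus spectral sequence
\[
E_2^{p,q} = \H^p_{\rm Zar}(X,\Hcal^q_{\ZZ/n}) \Longrightarrow \H^{p+q}_{\text{\'et}}(X,\ZZ/n),
\]
using the notation of \Cref{rmk:smNis}. By Bloch--Ogus the Gersten-type vanishing $E_2^{p,q}=0$ for $p>q$ holds, and by the Merkurjev--Suslin theorem one has the Bloch formula with finite coefficients
\[
\H^p_{\rm Zar}(X,\Hcal^p_{\ZZ/n}) \;\cong\; \operatorname{CH}^p(X)\otimes\ZZ/n,
\]
under which the edge map $E_2^{p,p}\to\H^{2p}_{\text{\'et}}(X,\ZZ/n)$ becomes the cycle class map. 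The low-degree exact sequence of the spectral sequence, restricted to total degrees $3$ and $4$, then reads
\[
\H^3_{\text{\'et}}(X,\ZZ/n)\longrightarrow \H^0_{\rm Zar}(X,\Hcal^3_{\ZZ/n})\longrightarrow \H^2_{\rm Zar}(X,\Hcal^2_{\ZZ/n})\longrightarrow \H^4_{\text{\'et}}(X,\ZZ/n),
\]
and by \Cref{rmk:smNis} the middle term is identified with $\operatorname{Inv}^3(X,\H_{\ZZ/n})$ while the third is $\operatorname{CH}^2(X)\otimes\ZZ/n$. The first arrow is easily checked to be \eqref{eq:neg} and the last to be the cycle class map, which gives the statement for smooth schemes.

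To pass to the quotient stack $\Xcal=[X/G]$, I would use the Edidin--Graham mixing construction. Pick a representation $V$ of $G$ with an open $U\subset V$ on which $G$ acts freely and whose complement has arbitrarily large codimension $c$. Under any of the hypotheses (1)--(3), \cite{EG}*{Proposition 23} ensures that $X_G:=(X\times U)/G$ is a smooth scheme, and the standard argument gives $\operatorname{CH}^2(\Xcal)=\operatorname{CH}^2(X_G)$ and $\H^i_{\text{\'et}}(\Xcal,\ZZ/n)=\H^i_{\text{\'et}}(X_G,\ZZ/n)$ for $i\leq 4$ once $c$ is taken large enough; the same stabilization holds for $\operatorname{Inv}^3$, since cohomological invariants form a smooth-Nisnevich sheaf and smooth pullback along $X\times U\to X$ (with complement of high codimension) is an isomorphism on unramified cohomology in a fixed degree. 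Applying the scheme-level exact sequence to $X_G$ and letting $c\to\infty$ yields the desired four-term exact sequence for $\Xcal$.

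The main obstacle is really the bookkeeping in the last step: ensuring that each of the four functors $\H^3_{\text{\'et}}$, $\operatorname{Inv}^3$, $\operatorname{CH}^2\otimes\ZZ/n$ and $\H^4_{\text{\'et}}$ stabilizes under the approximation $X_G$ and that the differentials in the Bloch--Ogus spectral sequence are compatible with the pullback $X_G\to \Xcal$. All four stabilizations are standard and the compatibility is functorial, so no extra input beyond Bloch--Ogus, Merkurjev--Suslin and Edidin--Graham is needed; the rest is a diagram chase on the low-degree terms.
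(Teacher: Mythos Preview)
Your proposal is correct and follows essentially the same route as the paper: the paper also derives the four-term sequence from the low-degree part of the coniveau/Bloch--Ogus spectral sequence (phrased in Rost's cycle-module notation $A^p(X,\H^q_{\ZZ/n})$ rather than $\H^p_{\rm Zar}(X,\Hcal^q_{\ZZ/n})$, which is equivalent), identifies $E_2^{2,2}$ with $\operatorname{CH}^2\otimes\ZZ/n$ via the Bloch formula, and then passes to the quotient stack by the Edidin--Graham approximation exactly as you describe. The only cosmetic difference is that the identification $A^2(X,\H^2_{\ZZ/n})\cong\operatorname{CH}^2(X)\otimes\ZZ/n$ does not actually require Merkurjev--Suslin; it follows directly from the shape of the Gersten complex, as the paper's citations to \cite{BO} and \cite{Ros} indicate.
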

\begin{proof}
For a smooth scheme $X$, we have the Bloch-Ogus-Rost interpretation of the second page of the coniveau spectral sequence \cite{BO}*{Corollary 6.3}, \cite{Ros}*{Remark 5.3 and Corollary 6.5}
\[ E^{p,q}_2=\Ar^p(X,\H^q_{\ZZ/n}) \Longrightarrow \H_{\text{\'{e}t}}^{p+q}(X,\ZZ/n), \]
where by $\Ar^p(X,\M^q)$ we denote the $p^{\rm th}$ cohomology group of the complex 
\[ \oplus_{x\in X^{(0)}}\M^{q}(\bfk(x)) \longrightarrow \oplus_{x \in X^{(1)}} \M^{q-1}(\bfk(x))\longrightarrow \oplus_{x \in X^{(2)}} \M^{q-2}(\bfk(x)) \longrightarrow\cdots\] 
We also have that $\Ar^2(X,\H^2_{\ZZ/n})=\operatorname{CH}^2(X)\otimes\ZZ/n$ \cite{BO}*{Theorem 7.7}, \cite{Ros}*{Remark 5.1}. As the spectral sequence is zero outside of the first quadrant and above the diagonal, the differential $E^{0,3}_2 \longrightarrow E^{2,2}_2$ fits into the exact sequence
\[ \H_{\text{\'{e}t}}^{3}(X,\ZZ/n) \longrightarrow E^{0,3}_2 \longrightarrow E^{2,2}_2 \longrightarrow \H_{\text{\'{e}t}}^{4}(X,\ZZ/n) \]
where the first morphism concides with \eqref{eq:neg} and the last morphism is the cycle class map \cite{BO}*{(7.2.1)}. 

Now we use the standard argument of equivariant approximation à la Borel: given a quotient stack $\Xcal=[X/G]$ satisfying our assumptions, we can always find an open subset $U$ of a $G$-representation such that (1) $G$ acts freely on $U$ and (2) the complement of $U$ has sufficiently high codimension \cite{EG}*{Lemma 9}.  

We can therefore apply the argument above to the smooth scheme $X'=X\times U/G$, and the cohomology groups, cohomological invariants and Chow groups of $X'$ involved in the exact sequence, as well as the morphisms of the complex, coincide with the ones of $\Xcal$. 
\end{proof}
\begin{cor}\label{cor:hom triv}
    The cohomological invariants of degree three come from cohomology if and only if the subgroup $\operatorname{CH}^2(X)_{hom}$ of homologically trivial cycles (i.e., the kernel of the cycle class map) is trivial.
\end{cor}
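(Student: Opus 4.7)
My plan is to read the corollary directly off the four-term exact sequence
\[ \H_{\text{\'{e}t}}^3(\Xcal,\ZZ/n) \longrightarrow \operatorname{Inv}^3(\Xcal,\H_{\ZZ/n})\xrightarrow{\partial}\operatorname{CH}^2(\Xcal)\otimes\ZZ/n\xrightarrow{\mathrm{cl}} \H_{\text{\'{e}t}}^4(\Xcal,\ZZ/n) \]
supplied by \Cref{prop:exact seq 3}, in which the leftmost arrow is the comparison map \eqref{eq:neg} and, by definition, $\operatorname{CH}^2(\Xcal)_{hom}=\ker(\mathrm{cl})$.

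I would then proceed in two short diagram-chase steps. First, I would unwind the definition: the phrase ``all degree three invariants come from cohomology'' simply means that the first arrow is surjective, and by exactness at $\operatorname{Inv}^3(\Xcal,\H_{\ZZ/n})$ this is equivalent to the vanishing of the connecting map $\partial$. Second, exactness at $\operatorname{CH}^2(\Xcal)\otimes\ZZ/n$ gives $\operatorname{im}(\partial)=\ker(\mathrm{cl})=\operatorname{CH}^2(\Xcal)_{hom}$, so $\partial=0$ holds if and only if $\operatorname{CH}^2(\Xcal)_{hom}=0$. Stringing these two equivalences together yields the statement.

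Since the substantive content has all been packaged into \Cref{prop:exact seq 3}, there is no real obstacle: the corollary is a pure diagram chase. The only subtlety I would flag for the reader is that the cycle class map in question is the mod $n$ one appearing in the exact sequence, so ``homologically trivial'' has to be understood in the mod $n$ sense—strictly speaking the corollary is a family of equivalences, one for each torsion order $n$ invertible in $\bfk$.
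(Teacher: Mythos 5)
Your proof is correct and is exactly the intended argument: the paper states the corollary without proof precisely because it is the immediate diagram chase from the exact sequence of \Cref{prop:exact seq 3} that you spell out (surjectivity of \eqref{eq:neg} in degree three is equivalent to the vanishing of the map to $\operatorname{CH}^2\otimes\ZZ/n$, whose image is $\ker(\mathrm{cl})$). Your remark that ``homologically trivial'' must be read mod $n$ is also consistent with how the corollary is later applied to $\operatorname{CH}^2(\sH_3)\otimes\ZZ/2$.
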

\section{Del Pezzo surfaces}\label{sec:dp}
In this section we outline some basic facts about del Pezzo surfaces. Most of the statements are generalizations over general base schemes or reformulations in stacky terms of well known facts for del Pezzo surfaces over an algebraically closed field \cite{DO}*{Chapter V}.
\begin{df}
    A \emph{del Pezzo surface} of degree $d$ is a smooth, proper and integral surface whose anticanonical line bundle is ample and has degree $d$.

    A del Pezzo surface $X$ over a base $S$ is a proper and smooth morphism $X\to S$ whose geometric fibers are del Pezzo surfaces.
\end{df}
For $d>2$ the anticanonical line bundle is very ample; for $d=2$ it defines a double cover of $\PP^2$ ramified over a smooth quartic, and for $d=1$ the linear system $|-2K_X|$ realizes $X$ as a double cover over a quadric cone $Q$ in $\PP^3$, ramified along the smooth genus four curve obtained by intersecting $Q$ with a smooth cubic surface.

\subsection{Geometric markings}
Every del Pezzo surface of degree $d\neq 8$ over an algebraically closed field can be realized as the blow-up of $\PP^2$ at $9-d$ points, where this constraint comes from imposing $\deg(-K_X)=d$.

Furthermore, ampleness of $-K_X$ puts further constraints on this set of points; indeed, the induced rational morphism $\varphi\colon\PP^2 \dashrightarrow \PP^d$ is given by the linear system of cubics through the $9-d$ points: if for instance three of the distinguished points were to be collinear, the line through those points would be contracted by $\varphi$. Similar arguments show that no six of them can lie on a conic, and no eight of them can lie on a nodal cubic, with the node being one of the points \cite{DO}*{Chapter V, Remark 1}.
\begin{df}\label{df:geom real}
    For $X$ a del Pezzo surface of degree $d$ over a base scheme $S$, a \emph{geometric realization} (or a blowing-down structure \cite{DO}*{Chapter V, 1, pag. 63}) is a triple $(X_0\to S, p_1,\ldots, p_{9-d},\varphi)$ where
    \begin{enumerate}
        \item[(i)] $X_0 \to S$ is a Severi-Brauer variety of relative dimension two, i.e. a smooth $S$-scheme that \'{e}tale locally on $S$ is isomorphic to a relative $\PP^2$,
        \item[(ii)] the $p_i$'s are disjoint $S$-points such that, on every geometric fiber, no three of them are collinear, no six of them lie on a conic, and no nine of them lie on a nodal cubic, with the node being one of the points, and
        \item[(iii)] $\varphi$ is an isomorphism between $X$ and the blow-up $\widetilde{X}_0$ of $X_0$ at the $p_i$'s.
    \end{enumerate}
    Isomorphisms of geometrical realizations are isomorphisms of the del Pezzo surfaces that commute with the identification as blow-ups of Severi-Brauer varieties.
\end{df}
The Picard group of a del Pezzo surface $X$ over an algebraically closed field is therefore
\[\operatorname{Pic}(X) \simeq \ZZ\cdot H \oplus \ZZ\cdot E_1\oplus\cdots\oplus \ZZ\cdot E_{9-d},\]
and this lattice together with the intersection pairing forms a unimodular odd lattice of signature $(1,9-d)$. We use the symbol $\sf H_{9-d}$ to denote this lattice \cite{DO}*{Chapter V, 2}.
\begin{df}
    For $X$ a del Pezzo surface of degree $d$ over a base scheme $S$, a \emph{geometric marking} is an $S$-isomorphism of lattices $\psi\colon\underline{\operatorname{Pic}}_{X/S} \simeq \sfH_{9-d}\times S$ which maps $-K_{X/S}$ to $[3H-\sum E_i]\times S$.

    Isomorphisms of geometrically marked del Pezzo surfaces $(X\to S,\psi)\simeq (X'\to S,\psi')$ are $S$-isomorphisms $\varphi\colon X \to X'$ such that $\psi\circ\varphi^* = \psi'$.
\end{df}
We can form the algebraic stack $\sDP_d^{\rm m}$ of geometrically marked del Pezzo surfaces of degree $d$, and the algebraic stack $\sDP_d^{\rm r}$ of del Pezzo surfaces of degree $d$ with a geometric realization.
\begin{prop}\label{prop:geom mark vs real}
    For $d\geq 1$, there is an equivalence of stacks $\sDP_d^{\rm m}\simeq \sDP_d^{\rm r}$.
\end{prop}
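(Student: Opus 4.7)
My plan is to construct mutually inverse functors
\[\Phi\colon \sDP_d^{\rm r} \longrightarrow \sDP_d^{\rm m}, \qquad \Psi\colon \sDP_d^{\rm m} \longrightarrow \sDP_d^{\rm r},\]
by spreading out in families the classical blow-up/blow-down equivalence for del Pezzo surfaces over an algebraically closed field, as in \cite{DO}*{Chapter V}.

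For $\Phi$, starting from a realization $(X_0 \to S, p_1,\ldots, p_{9-d}, \varphi\colon X \simeq \widetilde X_0)$, I will use the canonical positive generator $H$ of $\underline{\operatorname{Pic}}_{X_0/S}\simeq \ZZ_S$ (well-defined as a Picard class even when $X_0$ carries a nontrivial Brauer class) together with the classes $E_i\in \underline{\operatorname{Pic}}_{\widetilde X_0/S}$ of the exceptional divisors over the sections $p_i$. Transported through $\varphi^*$, these assemble into a global isomorphism $\underline{\operatorname{Pic}}_{X/S}\simeq \sfH_{9-d}\times S$, and the relation $-K_{X/S}\mapsto 3H-\sum E_i$ is a fiberwise check.

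For $\Psi$, given a marking $\psi$ on $X\to S$, the heart of the argument is to represent each class $\psi^{-1}(E_i)$ by an actual relative $(-1)$-curve inside $X$. On each geometric fiber, $E_i$ admits a unique effective representative, a $\PP^1$ with normal bundle $\OO_{\PP^1}(-1)$; since both $h^0$ and $h^1$ of the normal bundle vanish, the component of the relative Hilbert scheme parametrizing effective Cartier divisors of class $\psi^{-1}(E_i)$ is étale over $S$ with singleton geometric fibers, hence canonically isomorphic to $S$. Its universal family $\widetilde p_i\subset X$ is therefore a relative $\PP^1$-subbundle, and pairwise disjointness of the $\widetilde p_i$ follows from $E_i\cdot E_j=0$ on fibers. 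Applying Artin's theorem on simultaneous contractions of families of $(-1)$-curves yields a smooth proper $S$-algebraic space $X_0$ together with a blow-down $X\to X_0$ sending $\widetilde p_i$ to disjoint sections $p_i\in X_0(S)$. Since the geometric fibers of $X_0$ are projective planes, $X_0$ is a Severi-Brauer variety of relative dimension two. The positioning conditions of \Cref{df:geom real} (ii) hold fiberwise as a consequence of the ampleness of $-K_{X/S}$.

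The main obstacle lies in the $\Psi$ direction, specifically in the careful application of the Hilbert-scheme argument and Artin's contraction theorem in the relative setting, and in verifying that the resulting $X_0$ is representable by a scheme (i.e., genuinely Severi-Brauer rather than merely an algebraic space). Once $\Phi$ and $\Psi$ are in place, checking that $\Phi\Psi$ and $\Psi\Phi$ are naturally isomorphic to the identities reduces, by construction, to the classical blow-up/blow-down equivalence on geometric fibers.
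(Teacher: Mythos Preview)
Your $\Phi$ direction is essentially the paper's construction. The $\Psi$ direction, however, follows a genuinely different route, and both are correct.

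The paper contracts the exceptional classes \emph{iteratively, via linear systems}: it picks a line bundle $\cL$ representing $\psi^{-1}(E_{9-d})$, shows by cohomology-and-base-change that $\pi_*\cL$ has rank one and that a suitable anticanonical twist is relatively globally generated, and uses the resulting morphism to a projective bundle to produce a del Pezzo $X_{8-d}\to S$ of degree $d+1$ carrying the section $p_{9-d}=f(E_{9-d})$. The marking descends, and repeating the step eventually yields the Severi--Brauer variety $X_0$; the identification $X\simeq\widetilde X_0$ then comes from the universal property of the blow-up together with Zariski's main theorem. The entire construction stays within projective schemes.

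Your route---realizing each $\psi^{-1}(E_i)$ by its unique effective representative via the rigidity of $(-1)$-curves in the Hilbert scheme, then invoking Artin's contraction to blow all of them down at once---is more conceptual and handles the $9-d$ curves uniformly, at the price of leaving the category of schemes and having to argue a posteriori that $X_0$ is a scheme. You correctly flag this as the main obstacle; the cleanest fix is that $\omega_{X_0/S}^{-1}$ restricts to $\OO_{\PP^2}(3)$ on every geometric fibre, hence is relatively very ample by cohomology-and-base-change, so $X_0$ embeds in $\PP(\pi_*\omega_{X_0/S}^{-1})$ and is projective over $S$. The paper's iterative approach avoids this detour entirely, and also delivers the isomorphism $X\simeq\widetilde X_0$ via Zariski's main theorem rather than as a built-in feature of Artin's theorem; conversely, your approach does not need the inductive bookkeeping of descending the marking step by step.
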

\begin{proof}
    Let $(X_0\to S, p_1,\ldots, p_{9-d},\varphi)$ be a geometric realization of $X$, as in \Cref{df:geom real}. A Severi-Brauer variety with a marked point has a line bundle $\Hcal$ of vertical degree $1$, unique up to tensor product with line bundles coming from $S$. There is a unique isomorphism $\sfH_{9-d} \times S \to \underline{\operatorname{Pic}}_{\widetilde{X}_0/S}$ that maps the $E_i$ to the (equivalence classes of) the exceptional divisors at $p_i$, the class of $\Hcal$ to $H$ and $3H-\sum E_i$ to $-K_{\widetilde{X}_0/S}$. By composing this with $\varphi^*$, we get a geometrically marked del Pezzo surface.

    Let $\pi\colon X\to S$ be a geometrically marked del Pezzo surface, and call $\cL$ any representative in $\operatorname{Pic}(X)$ of the image of $E_{9-d}$ through the marking.
    \begin{lm}
        The sheaf $\pi_*\cL$ is locally free of rank one. Moreover, for $m$ big enough, the sheaf $\pi_*(\cL(-K_{X/S})^{\otimes m})$ is locally free and the morphism $$\pi^*\pi_*(\cL(-K_{X/S})^{\otimes m}) \longrightarrow \cL(-K_{X/S}^{\otimes m})$$ is surjective.
    \end{lm}
    \begin{proof}[Sketch of proof]
        If $S$ is the spectrum of an algebraically closed field, the first statement is equivalent to saying $h^0(X,\Ocal(E_{9-d}))=1$, which is known \cite{DO}*{Chapter V, proof of Proposition 8}; in this setting, also the other two statements are easy to check. The general case follows from these ones by applying the cohomology and base change theorem.
    \end{proof}
    Call $X_{8-d}$ the image of the induced $S$-morphism $f\colon X\to \PP(\pi_*(\cL(-K_{X/S})^{\otimes m})^{\vee})$. Then $X_{8-d}\to S$ is proper and smooth, with geometric fibers del Pezzo surfaces of degree $d'=d-1$, and with a section $p_{9-d}:S\to Y$ given by $f(E_{9-d})$. Furthermore, the geometric marking descends to a geometric marking $\sfH_{9-d'} \simeq \underline{\operatorname{Pic}}_{X_{8-d}/S}$.

    By repeating this argument, we eventually get a Severi-Brauer variety $X_0\to S$ with markings $p_1,\ldots, p_{9-d}$ satisfying the conditions of \Cref{df:geom real}. Let $\widetilde{X}_0$ be the blow-up of $X_0$ at the $p_i$'s. By the universal property of the blow-up, there exists a unique morphism $g\colon X \to \widetilde{X}_0$ over $X_0$.
    \begin{lm}
        The morphism $g\colon X \to \widetilde{X}_0$ is an isomorphism over $X_0$.
    \end{lm}
    \begin{proof}[Sketch of proof]
        Up to passing to the normalization of $S$, we can assume that $\widetilde{X}_0$ is normal. If we prove that $X\to \widetilde{X}_0$ is quasi-finite with $\widetilde{X}_0$ normal, we can conclude that it is an isomorphism by Zariski's main theorem. Uniqueness of normalization then implies the statement in the case of a non-normal base.
        
        Quasi-finiteness can be checked on the geometric fibers, hence is enough to prove the statement for $S$ the spectrum of an algebraically closed field. This blows down to proving that the induced morphisms from $E_i$ to the exceptional divisors of the blow-up are quasi-finite. If this were not the case, the induced morphism would be a contraction, and the image of $X$ would not be the whole blow-up, which is a contradiction as all the varieties involved are proper.
    \end{proof}
    Putting all together, we have produced a morphism $\sDP_d^{\rm m} \to \sDP_d^{\rm r}$. The two morphisms that we constructed are one the inverse of the other.
\end{proof}
\subsection{Root systems}
Given a del Pezzo surface $X\to S$ of degree $d$, the orthogonal group $\O(\sfH_{9-d})$ acts freely and transitively on the set of isomorphisms $\underline{\operatorname{Pic}}_{X/S} \simeq \sfH_{9-d}\times S$, but not on the set of the geometric markings of $X\to S$; the latter is a torsor for the action of the stabilizer of the element $-K_{{9-d}}:=3H-\sum E_i$.

Let $\sfH_{9-d}^0$ be the orthogonal complement of $K_{9-d}$, which is a negative-definite lattice of rank $9-d$. Consider the subset $\Phi\subset \sfH_{9-d}^0$ of elements $\alpha$ such that $(\alpha,\alpha)=(-2)$. The set $\Phi$ forms a root system, with reflections $\sigma_\beta(\alpha):=\alpha+\langle \alpha, \beta \rangle \beta$. A basis of $\Phi$ is given by $\alpha_i=E_i-E_{i+1}$ for $i< 9-d$ plus $\alpha_0=H - E_1 - E_2 - E_3$. It forms a basis for $\sfH_{9-d}^0$ as well.
\begin{lm}
    The morphism of algebraic stacks $\sDP_{d}^{\rm m} \longrightarrow \sDP_d$ is a torsor for the Weyl group $W(\Phi)$ of the root system $\Phi$.
\end{lm}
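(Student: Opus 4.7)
The plan is to establish three properties: a compatible $W(\Phi)$-action on $\sDP_d^{\rm m}$ over $\sDP_d$; that the induced map $W(\Phi)\times\sDP_d^{\rm m}\to\sDP_d^{\rm m}\times_{\sDP_d}\sDP_d^{\rm m}$ is an equivalence; and surjectivity étale-locally on $\sDP_d$.

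First, each reflection $\sigma_\beta(v)=v+\langle v,\beta\rangle \beta$, for $\beta\in\Phi\subset\sfH_{9-d}^0$, extends to an integral isometry of the full lattice $\sfH_{9-d}$ fixing $-K_{9-d}$: integrality is automatic since the pairing is integer-valued, and $\sigma_\beta$ fixes $-K_{9-d}$ because $(\beta,-K_{9-d})=0$ by the very definition of $\sfH_{9-d}^0$. This gives a $W(\Phi)$-action on $\sfH_{9-d}$ stabilizing $-K_{9-d}$, and hence an action on $\sDP_d^{\rm m}$ over $\sDP_d$ by $w\cdot(X,\psi):=(X,\,w\circ\psi)$.

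Second, I would verify that the canonical map $W(\Phi)\times\sDP_d^{\rm m}\to\sDP_d^{\rm m}\times_{\sDP_d}\sDP_d^{\rm m}$ is an equivalence of stacks. Any $T$-point $((X,\psi_1),(X',\psi_2),\varphi)$ of the target is isomorphic, via $\varphi$, to a triple $((X,\psi_1),(X,\tilde\psi_2),\mathrm{id})$ with $\tilde\psi_2=\psi_2\circ(\varphi^{*})^{-1}$, and the element $w:=\tilde\psi_2\circ\psi_1^{-1}$ automatically sits in $\operatorname{Stab}_{\O(\sfH_{9-d})}(-K_{9-d})$. Essential surjectivity therefore reduces to the classical root-system identity
\[ \operatorname{Stab}_{\O(\sfH_{9-d})}(-K_{9-d}) = W(\Phi),\]
which I expect to be the main obstacle. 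The Weyl group is evidently contained in the stabilizer by the first step, and equality requires ruling out extra automorphisms (e.g. diagram-type automorphisms of $E_6$ or $E_7$) that could extend to integral isometries of $\sfH_{9-d}$ preserving $-K_{9-d}$; this is a classical fact in the theory of del Pezzo surfaces \cite{DO}*{Chapter V}. Once this identity is in hand, full faithfulness on morphisms follows by a straightforward diagram chase: an automorphism of $(X,\psi)$ acts trivially on $\operatorname{Pic}(X)$, hence on $-K_X$, and rationality of $X$ together with ampleness of $-K_X$ forces it to coincide with (elements of) $\mathrm{Aut}(X)$ that act trivially on $\mathrm{Pic}(X)$, matching the automorphism groups on both sides of the equivalence.

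Third, for étale-local surjectivity I would use the equivalence $\sDP_d^{\rm m}\simeq\sDP_d^{\rm r}$ of \Cref{prop:geom mark vs real}. Given a family $X\to S$, étale-locally on $S$ one can trivialize the Severi-Brauer target of the blow-down and coherently order the $9-d$ blown-down sections, since on geometric fibers the set of blowing-down structures is finite, so the relative moduli of such structures is étale over $\sDP_d$. This produces an étale-local section of $\sDP_d^{\rm r}\to\sDP_d$, and combined with the previous step yields the desired $W(\Phi)$-torsor structure.
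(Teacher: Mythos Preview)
Your approach is essentially the paper's, only spelled out in more detail: the paper's two-sentence proof simply asserts that $W(\Phi)$ acts freely and transitively on the set of isometries $\underline{\operatorname{Pic}}^0_{X/S}\simeq\sfH_{9-d}^0\times S$ and that this set bijects with geometric markings, which is a repackaging of your stabilizer identity $\operatorname{Stab}_{\O(\sfH_{9-d})}(-K_{9-d})=W(\Phi)$. The paper does not separately treat \'etale-local existence of markings, so your third step is an addition rather than a deviation.
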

\begin{proof}
For $X \to S$ a del Pezzo surface, we denote $\underline{\operatorname{Pic}}^0_{X/S}$ the orthogonal complement of the class of the canonical divisor $K_X$. The Weyl group $W(\Phi)$ generated by reflections therefore acts freely and transitively on the set of isomorphisms $\underline{\operatorname{Pic}}^0_{X/S} \simeq \sfH_{9-d}^0\times S$, and moreover there is bijection between the set of such isomorphisms and the set of isomorphisms $\underline{\operatorname{Pic}}_{X/S} \simeq \sfH_{9-d}\times S$ that sends the class of $K_{X/S}$ to $K_{9-d}$. 
\end{proof}
\subsection{Degree two case}\label{sec:deg two dp}
Let $\sDP_2$ be the moduli stack of del Pezzo surfaces of degree $2$. Over an algebraically closed field, del Pezzo surfaces of degree two are obtained by blowing-up seven points in general position in the plane. 

On the other end, we can study the anticanonical linear system $|-K_X|$ by looking at the induced rational morphism $\PP^2\dashrightarrow \PP^2$: the latter is given by mapping a point $p$ to the pencil of cubics that passes through the seven points and $p$ itself. Any such pencil has nine base points, hence there exists only a unique other point $\iota(p)$ in the plane defining the same exact pencil. Therefore, the rational morphism $\PP^2\dashrightarrow \PP^2$ has degree two, and the Geiser involution $\iota\colon \PP^2\dashrightarrow \PP^2$ is the one that swaps the points in the fibers.

The induced morphism $X\to \PP^2$ is thus a double cover, and its ramification divisor is a smooth quartic curve in the plane.
\begin{prop}
    Let $V$ be the $\GLt$-representation of quartic trinary forms, with action $A\cdot f(x_0,x_1,x_2):=\det(A)^2 f(A^{-1}(x_0,x_1,x_2))$, and let $D_V$ be the divisor of singular forms. Then there is an isomorphism $\sDP_2 \simeq [(V \smallsetminus D_V)/\GLt]$.
\end{prop}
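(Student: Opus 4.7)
The plan is to construct a forward morphism $F\colon [(V \smallsetminus D_V)/\GLt] \to \sDP_2$ via the double cover construction, exhibit an inverse functor, and verify that these are mutually inverse equivalences.

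First I would unwind the groupoid of the quotient stack: an $S$-point of $[(V \smallsetminus D_V)/\GLt]$ amounts to a rank three vector bundle $\cE$ on $S$ together with a global section $f$ of $\Sym^4 \cE^{\vee} \otimes \det(\cE)^{\otimes 2}$ whose fiberwise zero locus in the Brauer--Severi surface $Y:=\PP(\cE) \to S$ is a smooth quartic. Setting $\Mcal := \OO_Y(2) \otimes \pi^* \det(\cE)$ one has $\Mcal^{\otimes 2} \simeq \OO_Y(4) \otimes \pi^* \det(\cE)^{\otimes 2}$, which is precisely the line bundle in which $f$ lives. Hence $f$ determines an $\OO_Y$-algebra structure on $\OO_Y \oplus \Mcal^{-1}$, and a relative spec produces a double cover $X \to Y$. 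A fiberwise check confirms that $X\to S$ is a del Pezzo surface of degree two.

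For the inverse, given a family $\pi\colon X \to S$ of such surfaces, I would set $\cE := (\pi_* \omega_{X/S}^{-1})^{\vee}$, which is locally free of rank three by cohomology and base change applied to the anticanonical bundle. The anticanonical morphism then factors as $X\to\PP(\cE)\to S$ and is a finite double cover onto its image. Reversing the computation of the previous paragraph, the branch divisor is cut out by a section of $\Sym^4 \cE^{\vee} \otimes \det(\cE)^{\otimes 2}$, which together with the $\GLt$-torsor underlying $\cE$ produces the required $S$-point of $[(V \smallsetminus D_V)/\GLt]$. That the two functors are mutually inverse then reduces to a direct bookkeeping.

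The main technical difficulty will be the careful tracking of line bundle twists on $\PP(\cE)$ and, in particular, ensuring that stabilizers and automorphism groups match on both sides (the stabilizer in $\GLt$ of a smooth $f$ should recover $\operatorname{Aut}(X)$, with the central $\bmu_2$ corresponding to the Geiser involution). The conceptual reason everything fits together is that the $\det(\cE)^{\otimes 2}$ factor in the definition of $V$ turns the class of the branch quartic in $\operatorname{Pic}(\PP(\cE))$ into a canonical square, with preferred square root $\Mcal$; without this twist one would have to supply an additional square root line bundle on $S$, and the correspondence would fail over bases whose Picard group has nontrivial $2$-torsion. Once this compatibility is verified, the double cover machinery delivers the equivalence directly.
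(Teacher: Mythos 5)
Your proposal is correct and takes essentially the same route as the paper: the paper regards $\sDP_2$ as the stack of double covers of relative-dimension-two Severi--Brauer varieties branched along smooth quartics (your anticanonical-map direction) and then cites Arsie--Vistoli's identification of such stacks of cyclic covers with $[(V\smallsetminus D_V)/\GLt]$, which is exactly what you reprove by hand. Your key observation that the $\det(\cE)^{\otimes 2}$ twist makes the class of the branch quartic a canonical square with preferred root $\Mcal=\OO_{\PP(\cE)}(2)\otimes\pi^*\det(\cE)$ is precisely the mechanism behind the cited result.
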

\begin{proof}
    We can regard $\sDP_2$ as the stack whose objects over a scheme $S$ are double covers $X\to P$ with $P\to S$ a Severi-Brauer variety of relative dimension two, i.e. a smooth and proper morphism whose geometric fibers are isomorphic to $\PP^2$, and with the ramification divisor given by a family of smooth quartics in $P$. Stacks of cyclic covers are isomorphic to the claimed quotient stack \cite{AV}*{Corollary 4.2}.
\end{proof}
\begin{prop}
    The stack $\sDP_2$ is isomorphic to the order two root stack of $\sM_3\smallsetminus\sH_3$ along dual of the Hodge line bundle, i.e. the stack whose objects are smooth non-hyperelliptic curves $\pi\colon C\to S$ of genus three plus a line bundle $\cL$ and an isomorphism $\cL^{\otimes 2} \simeq (\pi_*\omega_{C/S})^\vee$.
\end{prop}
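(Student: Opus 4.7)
Denote by $\lambda := \det\pi_*\omega_{C/S}$ the Hodge line bundle, and by $\Rcal_2$ the order-two root stack $\sqrt[2]{\lambda^\vee/(\sM_3\setminus\sH_3)}$, whose objects over $S$ are triples $(C\to S,\cL,\phi\colon\cL^{\otimes 2}\xrightarrow{\sim}\lambda^\vee)$. The plan is to produce two mutually inverse morphisms $\Rcal_2\leftrightarrow\sDP_2$, both constructed from the relative canonical embedding and the double cover formula.

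The key computation, from which everything flows, is the identity
\[\OO_P(C)\cong\OO_P(4)\otimes\pi_P^*\lambda^{-1}\]
in $\operatorname{Pic}(P)$, where $P:=\PP_S(\pi_*\omega_{C/S})$ is the target of the relative canonical embedding of the curve. This is derived from the relative Euler sequence, which yields $\omega_{P/S}\cong\OO_P(-3)\otimes\pi_P^*\lambda$, combined with the adjunction formula $\omega_{C/S}\cong\omega_{P/S}(C)|_C$ and the tautological equality $\OO_P(1)|_C\cong\omega_{C/S}$. The identity expresses the crucial fact that $\OO_P(C)$ has a square root in $\operatorname{Pic}(P)=\operatorname{Pic}(S)\oplus\ZZ[\OO_P(1)]$ exactly when $\lambda^\vee$ has a square root in $\operatorname{Pic}(S)$, with specific square roots in bijection.

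With this in hand, the morphism $\Rcal_2\to\sDP_2$ goes as follows: given $(C,\cL,\phi)$, set $\cM:=\OO_P(2)\otimes\pi_P^*\cL$, so that $\phi$ yields $\cM^{\otimes 2}\cong\OO_P(C)$; composing with the tautological section of $\OO_P(C)$ produces a cyclic double cover datum, and the resulting double cover $X:=\underline{\Spec}_P(\OO_P\oplus\cM^{-1})$ is, fiberwise and hence globally, a family of smooth del Pezzo surfaces of degree two. Conversely, from a del Pezzo $\pi_X\colon X\to S$ of degree two, the anticanonical morphism factors through a double cover $X\to P':=\PP_S((\pi_X)_*\omega_{X/S}^{-1})$ branched along a smooth relative plane quartic $C\to S$. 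The double-cover formula $(\pi_X)_*\OO_X\cong\OO_{P'}\oplus\cM^{-1}$ together with the projection formula identifies $(\pi_X)_*\omega_{X/S}^{-1}$ with $\pi_*\omega_{C/S}$, so $P'\cong P$ canonically. Writing the square root $\cM$ uniquely as $\OO_P(2)\otimes\pi_P^*\cL$ with $\cL:=(\pi_P)_*(\cM\otimes\OO_P(-2))$ pins down the line bundle $\cL$ on $S$, and the isomorphism $\cM^{\otimes 2}\cong\OO_P(C)$ combined with the key identity delivers $\phi\colon\cL^{\otimes 2}\cong\lambda^\vee$.

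Both constructions are manifestly functorial in $S$ and mutually inverse on the nose, since the cover data $(\cM,s)$ produced by one is precisely the input consumed by the other. The main obstacle is proving the identity $\OO_P(C)\cong\OO_P(4)\otimes\pi_P^*\lambda^{-1}$ and then verifying that the whole construction behaves well in families, namely that the formations of the canonical embedding, of $(\pi_P)_*(\cM(-2))$, and of the cyclic double cover all commute with arbitrary base change. The identity is a direct Euler-plus-adjunction calculation together with the vanishing $\pi_{C,*}\OO_C=\OO_S$; base-change compatibility then follows from the standard cohomology and base change theorem via the vanishing of $\H^i(\PP^2,\OO(k))$ for $i\geq 1$ and $k\geq -2$.
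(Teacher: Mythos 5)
Your proposal is correct in substance but follows a genuinely different route from the paper. The paper never touches the universal curve or the cyclic cover construction directly: it uses the two quotient presentations $\sDP_2\simeq[(V\smallsetminus D_V)/\GLt]$ and $\sM_3\smallsetminus\sH_3\simeq[(W\smallsetminus D_W)/\GLt]$ (which differ only in the determinant twist of the $\GLt$-action), observes that the map between them is pulled back from the $\mu_2$-gerbe $\sB\GLt\to\sB\GLt$ induced by $A\mapsto\det(A)^{-1}A$, and then identifies that gerbe with the root stack of $\sB\GLt$ along $\det^\vee$ by the explicit mutually inverse assignments $\cE\mapsto(\det(\cE)^{-1}\cE,\det(\cE))$ and $(\cE,\cL)\mapsto\cE\otimes\cL$; everything is reduced to linear algebra over $\sB\GLt$. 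You instead argue intrinsically on moduli: the key identity $\OO_P(C)\cong\OO_P(4)\otimes\pi_P^*\lambda^{-1}$ on $P=\PP_S(\pi_*\omega_{C/S})$ is correct (with your conventions $\omega_{P/S}\cong\OO_P(-3)\otimes\pi_P^*\lambda$ and $\OO_P(1)|_C\cong\omega_{C/S}$, and using $\pi_*\OO_C=\OO_S$ to descend from the restriction to $C$), and it does show that square roots of $\OO_P(C)$ of fiber degree two correspond exactly to square roots of $\lambda^\vee$ on $S$; the two directions via cyclic covers and via the relative anticanonical model are then the right constructions, and your reading of the statement with $\lambda=\det\pi_*\omega_{C/S}$ (the statement's ``$(\pi_*\omega_{C/S})^\vee$'' is a rank-$3$ bundle as written, so the determinant is intended) agrees with the paper's proof. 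What your route buys is independence from the quotient-stack presentations (in particular from the cyclic-cover classification of \cite{AV} that the paper invokes for $\sDP_2$); what the paper's route buys is that all stacky bookkeeping is concentrated in a one-line computation on $\sB\GLt$. Two points in your plan deserve to be spelled out rather than asserted ``on the nose'': first, the isomorphism $\OO_P(C)\cong\OO_P(4)\otimes\pi_P^*\lambda^{-1}$ must be chosen canonically for the two functors to compose to the identity -- this works because the adjunction and Euler isomorphisms give a canonical isomorphism over $C$, and the relevant Hom line bundle on $P$ is pulled back from $S$, so that isomorphism extends uniquely to $P$; second, an equivalence of stacks must also be checked on isomorphisms, i.e.\ you should match the $\mu_2$ of deck transformations of $X\to P$ (the Geiser involution over the identity of $C$) with the $\mu_2$ of scalings of $\cL$ compatible with $\phi$ -- this is exactly where the $\mu_2$-gerbe structure on both sides lives, and it does match under $\cM=\OO_P(2)\otimes\pi_P^*\cL$, but it is part of the proof, not a formality to be waved off.
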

\begin{proof}
    Let $W$ be the $\GLt$-representation of quartic trinary forms, with action $A\cdot f(x_0,x_1,x_2) = \det(A)f(A^{-1}x_0,x_1,x_2)$ and let $D_W$ be the divisor of singular quartics, so that we have $\sM_3\smallsetminus\sH_3\simeq [(W\smallsetminus D_W)/\GLt]$.

    Consider the homomorphism $\GLt\to\GLt$ given by $A\mapsto\det(A)^{-1}A$, whose kernel is $\mu_2$. The induced morphism $f\colon\sB\GLt\to\sB\GLt$ is a $\mu_2$-gerbe, and we have a cartesian diagram
    \[\begin{tikzcd} \sDP_2\simeq [(V\smallsetminus D_V) /\GLt] \ar[r] \ar[d] & \sB\GLt \ar[d, "f"] \\
    \sM_3\smallsetminus\sH_3 \simeq [(W\smallsetminus D_W)/\GLt] \ar[r] & \sB\GLt.
    \end{tikzcd}\]
    
    To conclude the proof, we show that $f\colon \sB\GLt \to \sB\GLt$ is isomorphic to the order two root stack $\sqrt{\sB\GLt} \to \sB\GLt$ made along the dual of the determinant line bundle, i.e. the invertible sheaf on $\sB \GLt$ whose value at a vector bundle $\cE$ is $\det(\cE)^{-1}$.

    By definition the objects of $\sqrt{\sB\GLt}$ are triples $(\cE,\cL,\alpha\colon \cL^{\otimes (-2)}\simeq \det(\cE))$ with $\cE$ (respectively $\cL$) a rank three (respectively rank one) vector bundle over a scheme $S$. The morphism
    \[ \sB\GLt \longrightarrow\sqrt{\sB\GLt},\quad \cE\longmapsto (\det(\cE)^{-1}\cE,\det(\cE)) \]
    has an inverse given by
    \[ \sqrt{\sB\GLt} \longrightarrow \sB\GLt,\quad (\cE,\cL)\longmapsto \cE\otimes\cL, \]
    and the composition $\sB\GLt \to \sqrt{\sB\GLt}\to \sB\GLt$ is indeed $f$.
\end{proof}
\subsection{Pointed del Pezzo surfaces}
In this section we formulate for moduli stacks of del Pezzo surfaces and related objects results that are known for the respective coarse moduli spaces \cite{Loo}*{1.6}.

Let $\sC\to\sM_{3}\smallsetminus\sH_3$ be the universal curve, whose objects are pairs $(\pi\colon C\to S, p\colon S\to C \text{ section of }\pi)$. We call an $S$-point $p$ an \emph{ordinary} point if the tangent line to $p$ in $\PP(\pi_*\omega_{C/S})$ intersects $C$ in an \'{e}tale divisor of degree two, i.e. for every geometric fiber the intersection locus is made of $p$ and two other distinct points.

Let $\sC_{\rm ord}$ be the open substack where the marking is an ordinary point, and set $\sDP_{2,\rm ord}:=\sDP_2 \times_{\sM_3} \sC_{\rm ord}$, which is therefore a $\mu_2$-gerbe over $\sC_{\rm ord}$. Finally, set $\sDP_{2,1}:=\sDP_2 \times_{\sM_3} \sM_{3,1}$.

Define the algebraic split torus
$T=\operatorname{Hom}_{\rm grp}(\Phi_{\sfE_7}\otimes \ZZ,\Gm)$ and let $D_T \subset T$ be the divisor of homomorphisms whose kernel contains an element of the root system. The torus $T$ is endowed with a natural $W(\sfE_7)\times\bmu_2$-action, namely
$((\sigma, \epsilon) \cdot \varphi)(x):= \varphi(\sigma\cdot x)^\epsilon $. 
\begin{df}
     Define the auxiliary stack $\sDP_{2, \rm ord}^{\rm aux}$ to be the stack over $\sDP_{2,\rm ord}$ whose objects are geometrically marked del Pezzo surfaces of degree two with a marked point on the branching divisor, an anticanonical divisor $D$ with a singular point at $p$, and an isomorphism $\alpha\colon \underline{\operatorname{Pic}}_{D/S}^0 \simeq{\Gm}\times S$.
\end{df}
\begin{lm}\label{lm:aux is mu2 torsor}
    The forgetful morphism $\sDP_{2,\rm ord}^{\rm aux} \longrightarrow \sDP_{2,\rm ord}^{\rm m}$ is a $\bmu_2$-torsor.
\end{lm}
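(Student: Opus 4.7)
The plan is as follows: given the base data of $\sDP_{2,\rm ord}^{\rm m}$ — a geometrically marked del Pezzo $X \to S$ of degree two together with an ordinary point $p$ on the branching quartic $B$ — I will show that the anticanonical divisor $D$ with a singular point at $p$ is uniquely determined, so that the fiber of the forgetful morphism consists only of the trivialization $\alpha$, which forms a $\bmu_2$-torsor.

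First I would construct $D$. The del Pezzo $X$ comes equipped with its anticanonical double cover $\pi \colon X \to P$ onto a Severi-Brauer surface $P$, branched along the smooth quartic $B \subset P$. Since $\omega_{X/S}^{-1} \simeq \pi^{*} \OO_P(1)$, every anticanonical divisor on $X$ is the pullback $\pi^{-1}(L)$ of a line $L \subset P$, and $\pi^{-1}(L)$ is singular at (the lift of) $p$ if and only if $L$ is tangent to $B$ at $p$. The tangent line $L_p \subset P$ is uniquely determined by $p$ and $B$ and varies well in families because $B$ is smooth over $S$. Put $D := \pi^{-1}(L_p)$.

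Next I would verify that $D \to S$ is a flat family of integral curves of arithmetic genus one, each with a single ordinary node along the section $p$. On geometric fibers the ordinariness of $p$ forces $L_p \cdot B = 2p + q_1 + q_2$ with $q_1, q_2$ distinct from each other and from $p$, so $D \to L_p \simeq \PP^1$ is étale-branched at $q_1, q_2$ with a single ordinary node over $p$ — a nodal Weierstrass model. Flatness follows from constancy of the Hilbert polynomial. Consequently $\underline{\operatorname{Pic}}^0_{D/S}$ is representable by a smooth commutative $S$-group scheme all of whose geometric fibers are $\Gm$, hence an étale form of $\Gm \times S$.

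Finally, in characteristic different from $2$ the automorphism group scheme of $\Gm$ is $\bmu_2$, generated by inversion, so the sheaf of group-scheme isomorphisms $\underline{\operatorname{Pic}}^0_{D/S} \simeq \Gm \times S$ is a $\bmu_2$-torsor. Since $D$ is canonically determined and $\alpha$ is precisely such an isomorphism, this identifies the forgetful morphism $\sDP_{2,\rm ord}^{\rm aux} \to \sDP_{2,\rm ord}^{\rm m}$ with that $\bmu_2$-torsor. The hard part will be the careful relative construction of $L_p$ and verifying that the ordinariness hypothesis is precisely what rules out inflections, higher-order tangencies, or bitangencies that would spoil the uniform nodal structure of $D \to S$; once this is in place, the relative Picard argument is standard.
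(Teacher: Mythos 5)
Your proposal is correct and follows essentially the same route as the paper: the ordinary-point condition singles out the unique tangent line whose preimage is the nodal anticanonical divisor $D$, so the only remaining datum is the trivialization $\alpha$ of $\underline{\operatorname{Pic}}^0_{D/S}$, on which $\bmu_2$ acts simply transitively by inversion. Your extra verifications (relative construction of the tangent line, $\underline{\operatorname{Pic}}^0_{D/S}$ being an \'etale form of $\Gm$, $\operatorname{Aut}(\Gm)\simeq\bmu_2$ in characteristic $\neq 2$) just spell out what the paper leaves implicit.
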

\begin{proof}
    Let $X\to S$ be a geometrically marked del Pezzo surface of degree $2$, with $p$ an ordinary $S$-point of the branching divisor, and let $\pi\colon X \to P$ be the double cover with ramification divisor the smooth quartic curve $C$. By definition of ordinary point, \'{e}tale locally on $S$, the unique line $\ell\subset P$ tangent to $C$ at $\pi(p)$ intersects the quartic curve in two other points, say $\pi(q)$ and $\pi(q')$, with $q \neq q'$.
    
    There is a unique anticanonical divisor $D\subset X$ which is an irreducible curve having a node at $p$: it is the preimage along $\pi$ of $\ell$. Observe that $D$ is naturally endowed with an involution, inherited from $X$, and with the degree two divisor $q+q'$, where both $q$ and $q'$ are fixed by the involution, as they lie on the branching divisor as well.

    The morphism $\sDP_{2,\rm ord}^{\rm aux} \to \sDP_{2,\rm ord}^{\rm m}$ has a natural structure of $\bmu_2$-torsor over $\sDP^{\rm m}_{2,\rm ord}$ via $\alpha \longmapsto (-)^{\epsilon} \circ \alpha$.
\end{proof}
The following proposition extends to stacks a known result for the coarse moduli spaces \cite{Loo}*{Poposition 1.8}.
\begin{prop}\label{prop:torus parametrization}
   There is a $W(\sfE_7)\times \bmu_2$-equivariant isomorphism $\sDP_{2,\rm ord}^{\rm aux} \simeq T\smallsetminus D_T$. 
\end{prop}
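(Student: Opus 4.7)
The plan is to construct a morphism $F\colon \sDP_{2,\rm ord}^{\rm aux}\to T$ directly by restricting line bundles from $X$ to the nodal anticanonical divisor $D$, and then to show that $F$ is $W(\sfE_7)\times \bmu_2$-equivariant, lands in $T\smallsetminus D_T$, and is an isomorphism. Given an object $(X\to S,\psi,p,D,\alpha)$, the geometric marking $\psi$ identifies the sublattice $\sfH_7^0\subset\sfH_7$ with the subsheaf of $\underline{\operatorname{Pic}}_{X/S}$ of classes orthogonal to $K_{X/S}$; since $[D]=-K_{X/S}$, any such class $v\in\sfH_7^0$ restricts to an element of $\underline{\operatorname{Pic}}_{D/S}^0$, and composing with $\alpha$ yields a homomorphism $\sfH_7^0\to\Gm$, that is, an $S$-point of $T$. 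Equivariance is tautological from the definitions: $W(\sfE_7)$ reparametrizes the marking on the left and acts naturally on the character lattice $\sfH_7^0$ on the right, while $\bmu_2$ inverts $\alpha$ on the left and inverts $\Gm$-values on the right.

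To see that the image of $F$ lies in $T\smallsetminus D_T$ I would argue pointwise: fix a geometric point, identify $X$ with the blow-up of $\PP^2$ at seven points $p_1,\ldots,p_7$ in general position, and let $D_0$ be the nodal cubic descending to $D$. For each root $\beta\in\Phi_{\sfE_7}$ the restriction $\beta|_D$ is nonzero in $\operatorname{Pic}^0(D)$ by a direct computation: roots of the form $E_i-E_j$ restrict to $[p_j]-[p_i]$, nonzero because the centres are distinct; roots of the form $H-E_i-E_j-E_k$ restrict to $H|_D-p_i-p_j-p_k$, nonzero by non-collinearity; and similarly for the remaining families of roots (no conic through six of the points, and no nodal cubic with a node at one of them), each corresponding to a clause of the general-position hypothesis in \Cref{df:geom real}. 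This matches the classical content of Looijenga's analysis \cite{Loo}*{Proposition 1.8}.

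To invert $F$ I would use a universal nodal cubic model: fix $D_0\subset \PP^2_\bfk$ with its canonical group structure $D_0^{\rm sm}\simeq \Gm\simeq \operatorname{Pic}^0(D_0)$. A point $t\in T(S)\smallsetminus D_T(S)$, combined with the splitting of $\sfH_7\to\ZZ$ given by $H\mapsto 1$, prescribes seven $S$-points $p_1,\ldots,p_7$ of $D_0^{\rm sm}$; the condition $t\notin D_T$ translates, through the root description above, into the general-position requirement, so the blow-up of $\PP^2_S$ at the $p_i$ is a del Pezzo $X$ of degree two, with the strict transform of $D_0$ the nodal anticanonical $D$, a canonical $\alpha$ from the fixed identification, and an ordinary point $p$ at the node. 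The main obstacle is upgrading this pointwise recipe to an isomorphism over an arbitrary base $S$. I would circumvent a direct verification of $F\circ G=\mathrm{id}$ and $G\circ F=\mathrm{id}$ by observing that both $\sDP_{2,\rm ord}^{\rm aux}$ and $T\smallsetminus D_T$ are smooth algebraic spaces of dimension seven (the source because the geometric marking and the trivialisation $\alpha$ kill all automorphisms, and $\sDP_{2,\rm ord}$ is already smooth of dimension seven as an open substack of the pull-back of the universal curve to $\sDP_2$), that $F$ is equivariant and bijective on geometric points by Looijenga, and that $F$ is injective on tangent spaces because infinitesimal deformations of the blow-up centres are faithfully detected by their images in $\operatorname{Pic}^0(D)$; this suffices to conclude that $F$ is an isomorphism onto $T\smallsetminus D_T$.
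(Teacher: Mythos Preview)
Your forward map $F$ and the verification that it lands in $T\smallsetminus D_T$ match the paper (\Cref{lm:from aux to torus}) essentially verbatim. The gap is in the inverse. Your recipe of fixing $D_0\subset\PP^2_\bfk$ and extracting seven $S$-points from $t\colon\sfH_7^0\to\Gm$ via a ``splitting of $\sfH_7\to\ZZ$'' does not work as stated: the lattice $\sfH_7^0$ contains the classes $E_i-E_j$ and $H-E_i-E_j-E_k$ but not the individual $E_i$, so from $t$ together with the fixed class $H|_{D_0}\in\operatorname{Pic}^3(D_0)$ you recover the differences $[p_i]-[p_j]$ and the sum $[p_1]+[p_2]+[p_3]$, hence only $3[p_1]$, leaving a residual $\mu_3$-ambiguity that need not split over a general base (and the kernel of your map $H\mapsto 1$ is $\langle E_1,\dots,E_7\rangle$, not $\sfH_7^0$, so the splitting does not resolve this). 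The paper's \Cref{lm:from torus to aux} avoids this by \emph{not} fixing the embedding into $\PP^2$: it reinterprets $T\smallsetminus D_T$ as the stack of a rigid nodal pointed curve $(C,x_1)$ equipped with a trivialisation of $\underline{\operatorname{Pic}}^0_C$ and seven nontrivial degree-zero line bundles $\chi_1,\dots,\chi_7$, uses $\chi_1,\dots,\chi_6$ (the $E_i-E_{i+1}$ coordinates) to place $x_2,\dots,x_7$, and spends $\chi_7$ (the $H-E_1-E_2-E_3$ coordinate) on the degree-three line bundle whose complete linear system embeds $C$ in $\PP^2$. This yields an honest morphism over any base, and one checks by inspection that the two constructions are mutually inverse.

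Your fallback of arguing that $F$ is \'{e}tale and bijective on geometric points is a legitimate strategy in principle, but as written it is circular: the proposition is precisely the upgrade of Looijenga's coarse-moduli statement over $\mathbb{C}$ to a stack-level isomorphism over any $\bfk$ with $\operatorname{char}(\bfk)\neq 2$, so invoking him for bijectivity is the question you are meant to answer. The tangent-space injectivity is likewise not free: making it precise requires identifying first-order deformations of $(X,\psi,p,D,\alpha)$ with movements of seven points on a fixed nodal cubic modulo $\operatorname{Aut}(\PP^2,D_0)$ and checking that restriction to $D$ detects them, which is essentially the content of the explicit inverse you were hoping to avoid.
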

We break down the proof in several parts. 
\begin{lm}\label{lm:from aux to torus}
    There is a $W(\sfE_7)\times \bmu_2$-equivariant morphism $\sDP_{2,\rm ord}^{\rm aux} \longrightarrow T\smallsetminus D_T$.
\end{lm}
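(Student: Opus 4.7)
The plan is to construct the morphism by restricting the line bundles associated to roots to the anticanonical divisor $D$. Given an object $(X \to S,\psi,p,D,\alpha)$ of $\sDP_{2,\rm ord}^{\rm aux}$, adjunction gives $D\cdot(D+K_{X/S})=0$, so $D$ has arithmetic genus one; since $D$ is irreducible with a single node at $p$, its generalized Jacobian $\underline{\operatorname{Pic}}^0_{D/S}$ is a rank one torus, which the datum $\alpha$ identifies with $\Gm\times S$. On the other hand, the geometric marking $\psi$ restricts to an identification of the sublattice $\sfH_7^0\subset\sfH_7$ orthogonal to $-K_{X/S}$ with the root lattice $\Phi_{\sfE_7}\otimes\ZZ$.

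For any root $r\in\Phi_{\sfE_7}$, let $\cL_r$ denote the corresponding line bundle on $X$ (well-defined up to pullback from $S$). Since $r\cdot(-K_X)=0$, the restriction $\cL_r|_D$ has degree zero on $D$ and thus defines an element of $\underline{\operatorname{Pic}}^0_{D/S}\simeq\Gm$. Additivity of restriction assembles these into a group homomorphism $\Phi_{\sfE_7}\otimes\ZZ\to\Gm$, i.e. an $S$-point of $T$. This assignment is functorial in $S$ and therefore defines a morphism of stacks $\sDP_{2,\rm ord}^{\rm aux}\to T$.

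Checking the equivariance is then formal. For $\sigma\in W(\sfE_7)$, twisting the marking by $\sigma$ precomposes the restriction homomorphism with $\sigma$, which matches the defining action $((\sigma,1)\cdot\varphi)(x)=\varphi(\sigma\cdot x)$ on $T$. The nontrivial element of $\bmu_2$ acts on $\sDP_{2,\rm ord}^{\rm aux}$ by $\alpha\mapsto (-)^{-1}\circ\alpha$ (cf. \Cref{lm:aux is mu2 torsor}), which inverts the resulting cocharacter, matching the $\bmu_2$-action on $T$.

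The main obstacle is to verify that the image lies in the open subscheme $T\smallsetminus D_T$, i.e. that no root is sent to the identity. I would reduce to the case where $S=\Spec(k)$ with $k$ algebraically closed and $X$ is the blow-up of $\PP^2$ at seven points in general position; then for each root $r$ one can represent $\cL_r$ as the difference of classes of distinct effective curves meeting $D$ transversally at smooth points, and $\cL_r|_D$ is the corresponding degree zero divisor class on the nodal cubic $D$. Non-triviality follows from the del Pezzo condition forbidding the associated $(-2)$-configurations together with the ordinarity of $p$. This is essentially the argument of Looijenga in \cite{Loo}*{1.6--1.8} for coarse moduli spaces, and the stacky statement does not require further input because both the source and the target are smooth and the map is intrinsically defined.
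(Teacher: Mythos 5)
Your construction is essentially the paper's: the geometric marking together with $\alpha$ turns the restriction homomorphism $\underline{\operatorname{Pic}}^0_{X/S}\to\underline{\operatorname{Pic}}^0_{D/S}$ into an $S$-point of $T$, equivariance is formal, and landing in $T\smallsetminus D_T$ is checked on geometric fibers root by root. The only difference is one of explicitness: where you defer to Looijenga for the non-degeneracy, the paper carries out the three cases $E_i-E_j$, $H-E_i-E_j-E_k$ and $2H-E_{i_1}-\cdots-E_{i_6}$ directly, using \Cref{prop:geom mark vs real} to realize the marking by seven points in general position, so that triviality of a restricted root would force two points to coincide, three to be collinear, or six to lie on a conic.
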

\begin{proof}
    The geometric marking and the isomorphism $\alpha$ determines a unique isomorphism $\operatorname{Hom}_{S-{\rm grp}}(\underline{\operatorname{Pic}}^0_{X/S},\underline{\operatorname{Pic}}^0_{D/S}) \simeq T\times S$, hence the restriction homomorphism $\underline{\operatorname{Pic}}^0_{X/S} \to \underline{\operatorname{Pic}}^0_{D/S}$ that maps $\cL \to \cL|_D$ determines an element $\chi$ of $T\times S$. 

    As $X$ is geometrically marked, we can write the elements of the root system $\Phi_{\sfE_7}\times S$ inside $\underline{\operatorname{Pic}}_{X/S}$ as $Z_{ij}=E_i - E_j$ for $i\neq j$, $Z_{ijk}=H-E_i-E_j-E_k$ for $i,j,k$ distinct, $Z_{i}=2H-E_{i_1}-\ldots -E_{i_6}$, where by \Cref{prop:geom mark vs real} the class of $H$ (respectively $E_i)$ comes from the hyperplane class of a Severi-Brauer variety $X_0 \to S$ (respectively is the exceptional divisor obtained by blowing up a point $x_i$ in $X_0$). Most importantly, the points $x_1,\ldots, x_7$ are in general position, i.e. they are distinct, no three of them lie on a line, no six of them lie on a conic.
    
    Observe that on every geometric fiber $(E_i\cdot D)=1$, hence $Z_{ij}|_D\sim 0$ if and only if $E_i \cap D = E_j \cap D$, which would imply $x_i=x_j$. Therefore, we have $Z_{ij}|_D \not\sim 0$. Similarly, we have that if $Z_{ijk}|_D \sim 0$ then there exists a line in $X_0$ passing through $x_i$, $x_j$ and $x_k$, which again cannot happen. For the same reason, we have $Z_{i}|_D \not\sim 0$ as otherwise we would have that six of the seven points lie on a conic. We have proved that $\chi$ lies in $(T\smallsetminus D_T )\times S$.
\end{proof}
\begin{lm}\label{lm:from torus to aux}
    There is a $W(\sfE_7)\times \bmu_2$-equivariant morphism $T\smallsetminus D_T \longrightarrow \sDP_{2,\rm ord}^{\rm aux}$.
\end{lm}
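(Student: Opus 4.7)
The plan is to explicitly invert the morphism of \Cref{lm:from aux to torus} via a blow-up construction, following Looijenga's argument for the coarse moduli spaces.

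We fix once and for all an irreducible nodal plane cubic $D_0 \subset \PP^2_\bfk$ with node $p_0$, together with an isomorphism $\Gm \simeq D_0^{\rm sm}$ whose induced map $\Gm \simeq \underline{\operatorname{Pic}}^0(D_0)$ is a group scheme isomorphism; this embeds $\Gm$ as an open subscheme of $\PP^2_\bfk$. Given $\chi \in (T \smallsetminus D_T)(S)$, we extract seven $S$-points of $\Gm \times S$ by setting $x_7 := 1$ and $x_i := \chi(E_i - E_7)$ for $1 \le i \le 6$. The restriction formulas $\chi(E_i - E_j) = x_i x_j^{-1}$, $\chi(H - E_i - E_j - E_k) = (x_i x_j x_k)^{-1}$, and $\chi(2H - E_{i_1} - \cdots - E_{i_6}) = (x_{i_1} \cdots x_{i_6})^{-1}$ implicit in the proof of \Cref{lm:from aux to torus}, together with the group-law description of collinearity and of conic-incidence on $D_0^{\rm sm}$, show that the condition $\chi \notin D_T$ is precisely equivalent to the seven points being in general position.

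We then blow up $\PP^2 \times S$ at $x_1, \ldots, x_7$ to obtain a del Pezzo surface $X \to S$ of degree two with a canonical blow-up structure, and hence a geometric marking by \Cref{prop:geom mark vs real}. The strict transform of $D_0 \times S$ is an anticanonical divisor $D$ with a node at the preimage $p$ of $p_0 \times S$; since the image of $D$ under the anticanonical map $X \to \PP^2$ is a line tangent to the branching quartic at $\pi(p)$, the point $p$ automatically lies on the branching divisor, so we have produced a marked ordinary point. The birational morphism $D \to D_0 \times S$ is an isomorphism on smooth loci (the seven centers all lie in $D_0^{\rm sm}$) and thus produces, by transport, the required isomorphism $\alpha\colon \underline{\operatorname{Pic}}^0(D/S) \simeq \Gm \times S$.

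What remains is to verify independence from the normalization $x_7 = 1$ and the $W(\sfE_7) \times \bmu_2$-equivariance. The normalization carries a residual $\mu_3$-ambiguity: two admissible choices differ by a common translation $g \in \Gm$ with $g^3 = 1$, and any such $g$ extends to an automorphism of $\PP^2$ preserving $D_0$, yielding a canonical isomorphism between the two resulting marked del Pezzo surfaces. The $\bmu_2$-equivariance is immediate: inverting $\chi$ inverts all $x_i$, which is induced by the involution of $D_0$ that swaps its two branches at $p_0$ and inverts $\alpha$. For $W(\sfE_7)$, the permutation reflections $\sigma_{E_i - E_{i+1}}$ simply relabel the blow-up centers, whereas the reflection $\sigma_{H - E_1 - E_2 - E_3}$ corresponds to the standard quadratic Cremona transformation of $\PP^2$ based at $x_1, x_2, x_3$; one checks that its effect on the seven-point configuration matches the induced action on $\chi$ up to the $\mu_3$-ambiguity above. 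This last Cremona-equivariance check is the main technical obstacle.
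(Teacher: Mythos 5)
There is a genuine gap, and it sits exactly where you normalize $x_7:=1$ against a plane cubic $D_0\subset\PP^2_\bfk$ fixed once and for all. For the blow-up at $x_1,\dots,x_7$ to restrict back to the given $\chi$ (equivalently, for the ``restriction formulas'' you invoke to hold), you need simultaneously $x_ix_j^{-1}=\chi(E_i-E_j)$ and $x_ix_jx_k=\chi(H-E_i-E_j-E_k)^{-1}$, where the second identity uses that collinearity on $D_0^{\rm sm}\simeq\Gm$ means ``product $=1$'' (identity at an inflection). Writing $x_i=x_7\cdot\chi(E_i-E_7)$, the second family of equations forces $x_7^{3}=\chi(3E_7-H)$. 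Now $3E_7-H$ is orthogonal to $K_7=-3H+\sum E_i$, hence lies in the root lattice (which equals $K_7^{\perp}$), and it is primitive there, so $\chi(3E_7-H)$ is an arbitrary unit; your choice $x_7=1$ therefore does not satisfy the formulas you call ``implicit'' in \Cref{lm:from aux to torus}. Concretely: (i) the claimed equivalence between $\chi\notin D_T$ and general position is false — with $x_7=1$, collinearity of $x_i,x_j,x_k$ means $\chi(E_i+E_j+E_k-3E_7)=1$, and $E_i+E_j+E_k-3E_7$ has square $-12$, so it is not a root; hence there are $\chi\notin D_T$ for which three of your points are collinear and the blow-up is not a del Pezzo surface at all, so the recipe does not even define a morphism to $\sDP_{2,\rm ord}^{\rm aux}$; (ii) even where the points are in general position, the resulting marked surface does not restrict to $\chi$, so you have not inverted \Cref{lm:from aux to torus}, which is what \Cref{prop:torus parametrization} needs. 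Repairing this would require $x_7$ to be a cube root of $\chi(3E_7-H)$, which need not exist in $\Gm(S)$ (already over a field $K$ with $\chi(3E_7-H)\notin (K^{*})^{3}$) and is never canonical; your ``residual $\bmu_3$-ambiguity'' is a symptom of this, but a common $3$-torsion translation cannot supply the missing cube root, nor make the construction functorial in $S$.

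The paper's proof avoids the issue precisely by not fixing the planar embedding: it starts from the abstract nodal pointed curve $(C,x_1)$ with trivialized $\underline{\operatorname{Pic}}^{0}$, defines $x_2,\dots,x_7$ by $\Ocal(x_i-x_{i+1})\simeq\chi_i$, and embeds $C$ by the degree-three line bundle $\cL=\chi_7\otimes\Ocal(x_1+x_2+x_3)$ built from the data itself ($\chi_1,\dots,\chi_7$ being the values of $\chi$ on the simple roots $E_i-E_{i+1}$ and $H-E_1-E_2-E_3$). With that choice the hyperplane class restricts correctly by construction, no normalization or root extraction is needed, general position really is equivalent to $\chi\notin D_T$, and equivariance is an easy check rather than the Cremona bookkeeping you flag as the main obstacle. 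Separately, your verification that the node yields an \emph{ordinary} point on the branch divisor is thinner than the paper's (which shows the node is fixed by the Geiser involution via the pencil of cubics through the seven points, and then excludes cusps/tacnodes/reducible fibers), but that part is repairable; the normalization issue, as stated, is not.
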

\begin{proof}
    We can regard $T\smallsetminus D_T$ as the stack whose objects consist of the following data:
    \begin{enumerate}
        \item A nodal, pointed, irreducible curve of genus one $(C,x_1)\to S$,
        \item a collection of seven non trivial line bundles $\chi_1,\ldots, \chi_7$ of degree zero, and
        \item an isomorphism $\underline{\operatorname{Pic}}^0_{C/S}\simeq \Gm\times S$.
    \end{enumerate}
    Indeed, the unique automorphism of a nodal pointed irreducible curve of genus one (i.e., the involution) acts non-trivially on the relative Jacobian of the curve, thus by fixing the isomorphism $\underline{\operatorname{Pic}}^0_{C/S}\simeq \Gm\times S$ we force the family of curves $(C,x_1)\to S$ to be trivial, not just isotrivial.

    Given these data, we can produce other six points $x_2,\ldots, x_7$ on $C\to S$ by imposing $\Ocal(x_i-x_{i+1})\simeq \chi_{i}$. Furthermore, we have a line bundle $\cL$ of degree three given by $\chi_7\otimes\Ocal(x_1+x_2+x_3)$, which we can use to embed $C$ in $X_0=\PP\H^0(C,\cL)\times S$.

    We can blow-up $X_0$ at $x_1,\ldots, x_7$: observe that by construction these seven points are in general position, hence the resulting surface is a del Pezzo surface $X\to S$ of degree two. By \Cref{prop:geom mark vs real} this surface inherits a geometric marking. 

    Consider the strict transform $D$ of $C$, together with the preimage $p$ of the node $x_0$ of $C$. To conclude the proof, we only need to show that $p$ is an ordinary point. This amounts to showing that it belongs to the branching divisor, and that the image of $D$ through the double cover $X\to \PP^2_S$ is a line tangent to the image of $p$ and intersecting the ramification divisor in other two distinct points.
    
    All these properties follows from the node $x_0$ being fixed by the Geiser involution of $X_0$, as then the preimage $p$ in $X$ of $x_0$ must lie on the branching locus of the ramified double cover $X \to \PP^2\times S$; moreover, such point must be an ordinary one, as otherwise $C$ would either have a cusp, a tacnode or be reducible.  
    
    To prove that $x_0$ is fixed, observe that any other cubic in the pencil of cubics passing through $x_1,\ldots, x_7$ intersects $C$ in nine points counted with multiplicity, hence these intersection points can only be $x_1,\ldots, x_7$ and $x_0$, as the latter is counted with multiplicity two, being a node. By definition of Geiser involution, we have $\iota(x_0)=x_0$.  We have thus defined a morphism $T\smallsetminus D_T \longrightarrow \sDP_{2,\rm ord}^{\rm aux}$, which is easy to check to be equivariant with respect to the $W(\sfE_7)\times\bmu_2$-action.
\end{proof}
\begin{proof}[Proof of \Cref{prop:torus parametrization}]
    In \Cref{lm:from aux to torus} and \Cref{lm:from torus to aux} we constructed two $W(\sfE_7)\times\bmu_2$-equivariant morphisms $\sDP_{2,\rm ord}^{\rm aux} \to T\smallsetminus D_T$ and $T\smallsetminus D_T \to \sDP_{2,\rm ord}^{\rm aux} $, which are one the inverse of the other. 
\end{proof}
\begin{cor}\label{cor:comm diagram aux}
    The induced diagram of algebraic stacks
    \[
    \begin{tikzcd}
        \sDP_{2,\rm ord} \ar[r] \ar[d] & \sB (W(\sfE_7)\times \bmu_2) \ar[d] \\
         \sDP_{2} \ar[r]& \sB W(\sfE_7),
    \end{tikzcd}
    \]
    where the top arrow is given by the torsor $\sDP_{2,\rm ord}^{\rm aux}$ and the other by $\sDP_{2}^{\rm m}$, is commutative.
\end{cor}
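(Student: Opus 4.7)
The plan is to verify commutativity by unwinding both composites and identifying each with the same $W(\sfE_7)$-torsor over $\sDP_{2,\rm ord}$. The key observation is that the torsor $\sDP_{2,\rm ord}^{\rm aux} \to \sDP_{2,\rm ord}$ factors as the composition of the $\bmu_2$-torsor $\sDP_{2,\rm ord}^{\rm aux} \to \sDP_{2,\rm ord}^{\rm m}$ from \Cref{lm:aux is mu2 torsor} with the $W(\sfE_7)$-torsor $\sDP_{2,\rm ord}^{\rm m} \to \sDP_{2,\rm ord}$, and the two actions commute on the total space because the $\bmu_2$-action modifies only the auxiliary isomorphism $\alpha$ while the $W(\sfE_7)$-action modifies only the geometric marking.

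First I would unwind the top-right composite. By construction, the top horizontal arrow classifies $\sDP_{2,\rm ord}^{\rm aux}$ as a $(W(\sfE_7) \times \bmu_2)$-torsor, and the right vertical arrow is induced by the projection $W(\sfE_7) \times \bmu_2 \to W(\sfE_7)$, which at the level of torsors corresponds to quotienting by the $\bmu_2$-factor. By \Cref{lm:aux is mu2 torsor}, this quotient equals $\sDP_{2,\rm ord}^{\rm m}$, so the top-right composite classifies $\sDP_{2,\rm ord}^{\rm m} \to \sDP_{2,\rm ord}$ as a $W(\sfE_7)$-torsor.

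Next I would unwind the bottom-left composite. The bottom arrow classifies $\sDP_2^{\rm m} \to \sDP_2$, and composing with the forgetful morphism $\sDP_{2,\rm ord} \to \sDP_2$ classifies the pullback $\sDP_2^{\rm m} \times_{\sDP_2} \sDP_{2,\rm ord}$. By the very definition of $\sDP_{2,\rm ord}^{\rm m}$ as the stack of geometrically marked degree two del Pezzo surfaces equipped with an ordinary point on the branching divisor, this fibered product is canonically identified with $\sDP_{2,\rm ord}^{\rm m}$, so the two composites classify the same $W(\sfE_7)$-torsor over $\sDP_{2,\rm ord}$, which yields the desired $2$-isomorphism of functors to $\sB W(\sfE_7)$.

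There is no real obstacle here, as the corollary follows formally from the torsor structures established in the preceding lemma and proposition; the whole argument is a diagram chase, and the only point requiring attention is verifying that the two $W(\sfE_7)$-actions on $\sDP_{2,\rm ord}^{\rm m}$ — the one inherited from $\sDP_2^{\rm m}$ by pullback and the one coming from the quotient $\sDP_{2,\rm ord}^{\rm aux}/\bmu_2$ — agree. This is immediate since both act by permuting the basis $\{H, E_1,\ldots, E_7\}$ of the Picard lattice in the same way, the pullback action being simply the restriction of the $W(\sfE_7)$-action on $\sDP_2^{\rm m}$ to the fibered product.
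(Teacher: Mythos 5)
Your argument is correct and is exactly the formal unwinding the paper intends: the corollary is stated there without proof, since it follows directly from \Cref{lm:aux is mu2 torsor} (identifying $\sDP_{2,\rm ord}^{\rm aux}/\bmu_2$ with $\sDP_{2,\rm ord}^{\rm m}$) together with the identification of $\sDP_{2,\rm ord}^{\rm m}$ with the pullback $\sDP_2^{\rm m}\times_{\sDP_2}\sDP_{2,\rm ord}$, which is precisely what you spell out. The only cosmetic slip is your closing claim that both $W(\sfE_7)$-actions "permute the basis $\{H,E_1,\ldots,E_7\}$": the Weyl group acts by lattice automorphisms fixing $K_7$ (a reflection such as the one in $H-E_1-E_2-E_3$ is not a basis permutation), but both actions are the same action by composition on the geometric marking, so your conclusion stands.
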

\section{Moduli of curves of genus three}\label{sec:curves}
Recall that a full level structure of order two on a smooth curve of genus $g$ is an isomorphism of the symplectic $\ZZ/2$-vector space 
\[\H_{\text{\'{e}t}}^1(C,\ZZ/2), \quad \langle\cdot,\cdot\rangle\colon \H^1_{\text{\'{e}t}}(C,\ZZ/2)\times\H^1_{\text{\'{e}t}}(C,\ZZ/2) \to \H^2_{\text{\'{e}t}}(C,\ZZ/2)\simeq\ZZ/2,\]
with the symplectic vector space $\ZZ/2^{\oplus 2g}$ endowed with the trivial symplectic form $\Omega$, i.e. is the choice of a symplectic basis of $\H^1_{\text{\'{e}t}}(C,\ZZ/2)$. 

Equivalently, one can think of $\H^1_{\text{\'{e}t}}(C,\ZZ/2)$ as the finite subscheme of the $2$-torsion points of the Jacobian variety of $C$, with symplectic structure given by the Weil pairing. 
The definition of level structures therefore generalizes to principally polarized abelian varieties as well: given an abelian variety $A$ over $\bfk$ of dimension $g$, the set of $2$-torsion points of $A$ can be regarded as a $\ZZ/2$-vector space of dimension $g$, with symplectic structure given by the Weil pairing. 

Let $\sM_{g}(2)\to\sM_g$ be the moduli stack of smooth curves of genus three with level structure of order two, i.e.
\[ \sM_g(2)(S)= \left\{ C \to S\text{, } \alpha\colon(\H^1_{\text{\'{e}t}}(C,\ZZ/2),\langle\cdot,\cdot\rangle)\simeq (\ZZ/2^{\oplus 6},\Omega) \right\}.\]
Let $\Sp_{2g}(2)$ denote the finite group of invertible symplectomorphisms of $(\ZZ/2^{\oplus 6},\Omega)$. By construction $\sM_g(2)$ has a natural structure of $\Sp_{2g}(2)$-torsor over $\sM_g$, thus inducing the classifying morphism $\sM_g(2) \to \sB \Sp_{2g}(2)$. 

Observe that the morphism above can be extended to $\sM_g^{\rm ct}$, the moduli stack of stable curves of compact type, and it factors via the Torelli morphism through $\sA_g$, the moduli stack of principally polarized abelian varieties of dimension $g$, where $\sA_g \to \sB\Sp_{2g}(2)$ corresponds to the torsor $\sA_g(2)\to \sA_g$ given by the stack of principally polarized abelian varieties with a level structure of order two.


We now specialize to the case $g=3$. Observe that there is a factorization
\begin{equation}\label{eq:composition} \sM_{3,n} \longrightarrow \sM_{3,n}^{\rm ct} \longrightarrow \sA_3 \longrightarrow \sB\Sp_6(2).\end{equation}
We aim at proving the following.
\begin{thm}\label{thm:inj}
    Assume that $\operatorname{char}(\bfk)$ is not $2$. For $\sX$ any of the stacks appearing in \eqref{eq:composition}, the pullback homomorphism $\Inv(\sB\Sp_6(2),\M) \hookrightarrow\Inv(\sX,\M)$ is injective.
\end{thm}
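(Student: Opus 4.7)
By \Cref{lm:versal implies injective}, it suffices to establish versality of the classifying $\Sp_6(2)$-torsor on each stack $\sX$ in \eqref{eq:composition}, or equivalently (and more usefully) to verify injectivity of the pullback directly. By the factorization of pullbacks, injectivity on the ``deepest'' stack $\sM_{3,n}$ automatically yields injectivity on $\sM_{3,n}^{\rm ct}$, $\sA_3$, and $\sB\Sp_6(2)$. In turn, the plan is to reduce further to $\sM_{3,1}$: prove injectivity on $\sM_{3,1}$ via a versality argument, extend to $\sM_{3,n}^{\rm ct}$ via an embedding $\sM_{3,1} \hookrightarrow \sM_{3,n}^{\rm ct}$, and finally pass to $\sM_{3,n}$ by a smoothing argument exploiting the DVR axiom $(\star)$.

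\textbf{Step 2: Versality on $\sM_{3,1}$.} Using the morphism $\sDP_{2,\rm ord} \to \sM_{3,1}$ (a $\bmu_2$-gerbe followed by an open immersion, hence dominant), I would show that the pulled-back $\Sp_6(2)$-torsor on $\sDP_{2,\rm ord}$ is versal; versality then descends to $\sM_{3,1}$ along the dominant morphism, since the preimage of a dense open is dense. By \Cref{cor:comm diagram aux} and \Cref{prop:torus parametrization}, this pulled-back torsor is the pushforward, under a surjection $W(\sfE_7) \times \bmu_2 \twoheadrightarrow \Sp_6(2)$, of the $W(\sfE_7)\times\bmu_2$-torsor $\sDP^{\rm aux}_{2,\rm ord} \simeq T \smallsetminus D_T$, whose total space is a dense open subset of the \emph{split} torus $T$. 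The plan is to invoke a ``tori version'' of the standard generic torsor principle: because $T$ is split, $T \smallsetminus D_T$ is a rational $\bfk$-variety, and a finite group acting generically freely on a rational variety produces a versal torsor; pushforward along a surjection of finite groups preserves versality, yielding the required versal $\Sp_6(2)$-torsor on $\sDP_{2,\rm ord}$.

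\textbf{Step 3: Extension and main obstacle.} For $\sM_{3,n}^{\rm ct}$, one uses the embedding $\iota\colon \sM_{3,1} \hookrightarrow \sM_{3,n}^{\rm ct}$ attaching a tree of rational components with $n-1$ extra markings at $p$; since the $2$-torsion of the Jacobian of a compact-type nodal curve is concentrated on the positive-genus components, the $\Sp_6(2)$-torsor on $\sM_{3,n}^{\rm ct}$ pulls back through $\iota$ to the $\Sp_6(2)$-torsor on $\sM_{3,1}$, and injectivity propagates by factorization of pullbacks. For $\sM_{3,n}$ itself, every compact-type object over a field deforms over a geometric DVR to a smooth one, so the axiom $(\star)$ forces any invariant of $\sM_{3,n}^{\rm ct}$ vanishing on $\sM_{3,n}$ to vanish identically. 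The main obstacle is the identification in Step 2 of the pushforward of $\sDP^{\rm aux}_{2,\rm ord}$ under $W(\sfE_7) \times \bmu_2 \twoheadrightarrow \Sp_6(2)$ with the pullback of the level-two torsor $\sM_{3,1}(2)\to\sM_{3,1}$: this amounts to reconciling the classical isomorphism $W(\sfE_7)/\{\pm 1\}\cong\Sp_6(2)$, induced by restricting divisor classes on the del Pezzo surface to the branch quartic and reducing mod~$2$, with the étale-cohomological definition of the level-two torsor. A secondary (but expected to be routine) difficulty is making the ``tori version'' of the generic torsor argument rigorous in this stacky setting.
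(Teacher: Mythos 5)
Your overall architecture coincides with the paper's: versality of the auxiliary torsor over $\sDP_{2,\rm ord}$ coming from the split-torus parametrization (\Cref{prop:torus parametrization}), the identification of geometric markings modulo the Geiser involution with level-two structures (this is exactly \Cref{prop:iso DPm and M3(2)}, proved via Aronhold bases of bitangents, so the ``main obstacle'' you flag is indeed the step the paper supplies), the splitting $W(\sfE_7)\simeq\Sp_6(2)\times\ZZ/2$, the boundary embedding $\sM_{3,1}\hookrightarrow\sM_{3,n}^{\rm ct}$, and passage to the open substack $\sM_{3,n}$. The one genuine gap is your justification of versality in Step 2. The principle ``a finite group acting generically freely on a rational variety produces a versal torsor'' is false: versality requires that for every infinite field $F\supset\bfk$ and every $G$-torsor $Q$ over $F$ the twisted total space $(X\times Q)/G$ have a dense set of $F$-points, and rationality of $X$ over $\bfk$ is not inherited by twists. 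For example, $\ZZ/2\times\ZZ/2$ acting on $\PP^1$ through $\mathrm{PGL}_2$ is generically free on a rational variety, but the twist of $\PP^1$ by the torsor $(a,b)$ is the conic of the quaternion algebra $(a,b)$, which may have no $F$-points; such a torsor then never arises as a fiber over an $F$-point, so the torsor is not even weakly versal. What makes the present situation work --- and is precisely the paper's proof of versality --- is that $W(\sfE_7)\times\bmu_2$ acts on $T$ by automorphisms of $T$ as an algebraic group (lattice automorphisms and inversion), so the twist of $T\smallsetminus D_T$ by any torsor over $F$ is a dense open subvariety of an $F$-torus; tori over infinite fields are unirational and have dense rational points, which produces the required lift into any dense open invariant subset. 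So the argument must invoke the torus structure of the total space, not its rationality as a $\bfk$-variety.

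Two smaller points. First, ``pushforward along a surjection of finite groups preserves versality'' is false in general, since an $H$-torsor over a field need not lift along $G\twoheadrightarrow H$ (already for $\ZZ/4\to\ZZ/2$); it is harmless here only because $W(\sfE_7)\times\bmu_2\to\Sp_6(2)$ is split, which is also how the paper proceeds, replacing pushforwards by the injections $\Inv(\sB\Sp_6(2))\hookrightarrow\Inv(\sB W(\sfE_7))\hookrightarrow\Inv(\sB(W(\sfE_7)\times\bmu_2))$ combined with the commutative diagrams of \Cref{cor:comm diagram aux} and \Cref{cor:comm diag dp and m3}. Second, your Step 3 is sound and matches the paper's final step: the paper leaves the passage from $\sM_{3,n}^{\rm ct}$ to $\sM_{3,n}$ implicit (restriction of invariants of a smooth irreducible stack to a dense open substack is injective), and your explicit smoothing argument over a geometric DVR with prescribed residue field, via axiom $(\star)$, is a correct way to make that passage explicit.
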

\subsection{Symplectic spaces over $\ZZ/2$} \label{sub:symp}
We recall some basic definitions and constructions for symplectic spaces over $\ZZ/2$ \cite{Mum}, \cite{HG}*{Sections 1-2}.
Let $(V,\langle\cdot,\cdot\rangle )$ be such a space, of dimension $2g$.
\begin{df}
    A quadratic form $q$ over a symplectic $\ZZ/2$-space $(V,\langle \cdot,\cdot\rangle)$ is a function $V\to\ZZ/2$ such that $q(x+y)+q(x)+q(y)=\langle x,y\rangle$.
\end{df}
We denote the set of quadratic forms of $V$ as $QV$: regarding $V$ as a group via its additive structure, we see that $QV$ forms a $V$-torsor, with the $V$-action defined via the formula $(q+v)(x)=q(x)+\langle v,x\rangle$. If $V=X_1\oplus X_2$ is an isotropic decomposition, i.e. $\langle \cdot,\cdot\rangle|_{X_i}=0$ for $i=1$, $2$, we can define a quadratic form $q(x+y)=\langle x,y\rangle$. 
The natural action of $\Sp(V)$ on $QV$ has two orbits, one of which is made of those quadratic forms stemming from isotropic decompositions.
\begin{df}
    The Arf invariant $a(q)$ of a quadratic form is $0$ or $1$, depending on whether $q$ belongs to one orbit or the other or, equivalently, whether $q$ stems from an isotropic decomposition or whether $q$ has $2^{g-1}(2^g+1)$ zeros or not. 
\end{df}
The set $W=V\cup QV$ inherits a natural structure of $\ZZ/2$-vector space thanks to the $V$-action on $QV$. 
\begin{df}
    An Aronhold basis for $W$ is a basis $q_1,\ldots, q_{2g+1}$ of $W$ with $q_i\in QV$ and such that, for any quadratic form $q$, the number of elements of the basis necessary for writing down $q$ is equal to $a(q)$ modulo $4$.
\end{df}
Aronhold basis exist with $a(q_i)=0$ for $g\equiv 0,1$ mod $4$, and with $a(q_i)=1$ for $g\equiv 2,3$ mod $4$. Moreover, the $\Sp(V)$-action on the set of Aronhold basis is free and transitive, thus making it into an $\Sp(V)$-torsor.
\begin{lm}\label{lm:aronhold to basis}
    Let $V$ be a symplectic space over $\ZZ/2$ of dimension $2g$. The choice of a symplectic basis determines an Aronhold basis, and viceversa.
\end{lm}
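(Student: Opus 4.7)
The plan is to exploit the fact, highlighted in the paragraph preceding the lemma, that the set of Aronhold bases of $W$ is an $\Sp(V)$-torsor. The set of symplectic bases of $V$ is likewise an $\Sp(V)$-torsor, essentially by the definition of $\Sp(V)$. Since any $\Sp(V)$-equivariant map between non-empty $\Sp(V)$-torsors is automatically a bijection, it suffices to produce a single natural such map, in either direction, between these two sets.

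To construct the map going from symplectic bases to Aronhold bases, I would proceed as follows. A symplectic basis $(e_1, f_1, \ldots, e_g, f_g)$ singles out an isotropic decomposition $V = \langle e_1, \ldots, e_g\rangle \oplus \langle f_1, \ldots, f_g\rangle$ and hence a distinguished quadratic form $q^\circ \in QV$ of Arf invariant $0$, defined by $q^\circ(\sum a_i e_i + b_i f_i) = \sum a_i b_i$. From $q^\circ$ I would then produce $2g+1$ elements $q_k = q^\circ + w_k \in QV$, where $w_1, \ldots, w_{2g+1} \in V$ are vectors built functorially from the symplectic basis (roughly, among the $e_i$, $f_j$, and possibly with a further shift by a fixed vector depending on $g \bmod 4$ to correct the Arf invariants to the value prescribed in the paragraph above the lemma). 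The $w_k$'s will be chosen so that their pairwise differences span $V$, which in turn forces $(q_1, \ldots, q_{2g+1})$ to be a basis of the $(2g+1)$-dimensional space $W$.

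The main technical step will be the verification of the Aronhold condition, namely that for any $q \in QV$ written uniquely as $\sum_{i \in I} q_i$ the cardinality $|I|$ matches $a(q)$ modulo $4$. This reduces to a finite combinatorial check resting on the standard identity $a(q + v) = a(q) + q(v)$ for $q \in QV$ and $v \in V$, together with bilinearity of $\langle \cdot, \cdot \rangle$; one telescopes the contributions from each $q_{i} \in I$ and tracks the correction terms. I expect this Arf bookkeeping, with its case distinction on $g \bmod 4$, to be the main friction point of the proof. Once the construction is shown to produce an Aronhold basis, equivariance under $\Sp(V)$ is automatic from the naturality of each ingredient ($q^\circ$, the $w_k$'s, and the shift), and one concludes via the torsor argument of the first paragraph that the assignment is a bijection, yielding the reverse correspondence as well.
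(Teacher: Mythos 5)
Your overall framing coincides with the paper's: both sets (symplectic bases of $V$, Aronhold bases of $W$) are $\mathrm{Sp}(V)$-torsors, so a single equivariant map in either direction is automatically a bijection. The problem is that you never actually produce that map. The entire content of the lemma sits in the construction you leave as a placeholder: the vectors $w_1,\ldots,w_{2g+1}$ are described only as ``roughly, among the $e_i$, $f_j$, and possibly with a further shift by a fixed vector depending on $g \bmod 4$,'' and the verification that $(q^\circ+w_1,\ldots,q^\circ+w_{2g+1})$ satisfies the Aronhold condition --- that for every $q\in QV$ the number of basis elements in its expansion equals $a(q)$ modulo $4$ --- is explicitly deferred (``I expect this Arf bookkeeping \ldots to be the main friction point''). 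That bookkeeping is not a routine corollary of $a(q+v)=a(q)+q(v)$: it amounts to re-proving the classical existence theorem for Aronhold systems (a condition to be checked on all $2^{2g}$ quadratic forms), which the paper does not reprove but simply quotes in the paragraph preceding the lemma. Since a torsor has no canonical point, no natural bijection exists until such a map is exhibited, so as written the proposal has a genuine gap at its central step.

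It is also worth noting that you chose the harder direction. The paper's proof goes from an Aronhold basis to a symplectic basis by explicit formulas: sums of an even number of the $q_i$ lie in $V$ (since $QV$ is a $V$-torsor), and one sets $e_i=q_{2i-1}+q_{2i}$ and $f_i=q_1+\cdots+q_{2i-1}+q_{2g+1}$; checking that this is a symplectic basis only requires evaluating pairings of such differences, with no Arf-invariant case analysis and no appeal to the $g\bmod 4$ dichotomy, after which the same equivariance-of-torsors argument you invoke closes the proof. If you want to salvage your direction, you must write down the $w_k$ explicitly and carry out the mod-$4$ verification; otherwise the economical route is the paper's, starting from the Aronhold side.
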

\begin{proof}
    Given an Aronhold basis we set
    \begin{align*}
        &e_1=q_1+q_2,  &&f_1=q_1+q_{2g+1}, \\
        &e_2=q_3+q_4,  &&f_2=q_1+q_2+q_3+q_{2g+1}, \\
        &\vdots  &&\vdots \\
        &e_g=q_{2g-1}+q_{2g}, &&f_g=q_1+\ldots + q_{2g-1}+q_{2g+1}.
    \end{align*}
    which forms a symplectic basis of $V$. This morphism of set is $\Sp(V)$-equivariant, hence an isomorphism.
\end{proof}
The theory outlined so far establishes a connection between level structures of order $2$ on curves and theta characteristics \cite{Mum}.
\begin{lm}\label{lm:theta to q}
    For $C$ a smooth curve, let $V$ be the $\ZZ/2$-vector space $\operatorname{Jac}(C)[2]$, endowed with the symplectic form given by the Weil pairing. Then there is a natural isomorphism between $QV$ and the set of theta characteristics of $C$; moreover, the set of even theta characteristics (respectively odd theta characteristics) corresponds to the forms whose Arf invariant is $0$ (respectively $1$).
\end{lm}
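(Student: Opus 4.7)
The plan is to realize both $QV$ and the set $\Theta(C)$ of theta characteristics as $V$-torsors and construct an equivariant map between them, following Mumford. Recall that a theta characteristic is a line bundle $L$ on $C$ with $L^{\otimes 2}\simeq \omega_C$; the set $\Theta(C)$ is a torsor under $V=\operatorname{Jac}(C)[2]$ via $(L,\eta)\mapsto L\otimes \eta$, so once we produce a $V$-equivariant map $\Theta(C)\to QV$, bijectivity is automatic.

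First I would define, for each $L\in \Theta(C)$, the function
\[q_L\colon V \longrightarrow \ZZ/2,\qquad q_L(\eta):= h^0(C,L\otimes\eta)+h^0(C,L) \pmod 2.\]
The key computation, which is the main obstacle, is Mumford's identity
\[q_L(\eta_1+\eta_2)+q_L(\eta_1)+q_L(\eta_2)=\langle \eta_1,\eta_2\rangle,\]
establishing that $q_L\in QV$. This is the content of \cite{Mum} and can be derived from the geometric interpretation of the Weil pairing via the parity of sections of translates of the theta divisor; I would simply invoke it.

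Next I would verify $V$-equivariance. Unwinding definitions,
\[q_{L\otimes\eta_0}(\eta)=h^0(L\otimes\eta_0\otimes\eta)+h^0(L\otimes\eta_0) \pmod 2,\]
and a short manipulation using the identity above (applied to $\eta_0$ and $\eta$) yields $q_{L\otimes \eta_0}(\eta)=q_L(\eta)+\langle \eta_0,\eta\rangle$, which is exactly the $V$-action on $QV$ recalled at the start of \Cref{sub:symp}. Since both $\Theta(C)$ and $QV$ are $V$-torsors, equivariance forces the map $L\mapsto q_L$ to be a bijection; naturality in $C$ is clear from the construction.

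It remains to match the Arf invariant with the parity of theta characteristics. Note $q_L(0)=0$, so the zero locus of $q_L$ consists of those $\eta\in V$ for which $L\otimes\eta$ has the same parity as $L$. If $L$ is even, these are the even theta characteristics, of which there are $2^{g-1}(2^g+1)$, forcing $a(q_L)=0$. If $L$ is odd, the zero locus has $2^{g-1}(2^{g}-1)$ elements, giving $a(q_L)=1$. This concludes the identification and the proof.
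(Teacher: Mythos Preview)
Your proof is correct and follows essentially the same approach as the paper: both define $q_L(\eta)=h^0(L\otimes\eta)+h^0(L)\bmod 2$, invoke Mumford's identity to see this lands in $QV$, and use $V$-equivariance together with the torsor structure to conclude bijectivity. You supply more detail than the paper on the equivariance check and on the Arf/parity correspondence (which the paper's proof leaves implicit); your counting argument for the latter is fine provided the classical count of even theta characteristics is taken as known independently, since the modern proof of that count often runs through this very lemma.
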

\begin{proof}
    Given a theta characteristic $\theta$, we define the function
    \[ q_\theta\colon\cL \longmapsto h^0(\theta)+h^0(\cL\otimes \theta) \text{ mod } 2.\]
    This is a quadratic form with respect to the Weil pairing \cite{Mum}*{Page 182, Identity ($\star$)}, and hence it defines a morphism from the set of theta characteristics to $QV$. As this morphism is equivariant with respect to the $V$-action on both sets, we deduce that it is an isomorphism.
\end{proof}
In particular, the combination of \Cref{lm:aronhold to basis} and \Cref{lm:theta to q} implies the following.
\begin{lm}\label{lm:level structures to aronhold}
    There is an equivalence of $\Sp_{2g}(2)$-torsors between the set of full level $2$ structures on $C$ and the set of Aronhold basis of theta characteristics.
\end{lm}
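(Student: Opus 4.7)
The plan is to chain together \Cref{lm:aronhold to basis} and \Cref{lm:theta to q}, since both sides of the claimed equivalence are built out of the same basic linear-algebraic data. First I would unfold definitions: setting $V = \operatorname{Jac}(C)[2]$ equipped with its Weil pairing, a full level $2$ structure on $C$ is by definition a symplectic isomorphism $(V, \langle \cdot, \cdot \rangle) \simeq (\ZZ/2^{\oplus 2g}, \Omega)$, which is the same datum as a choice of symplectic basis of $V$. Under the tautological isomorphism $\Sp(V) \simeq \Sp_{2g}(2)$ induced by any such basis, the standard $\Sp_{2g}(2)$-action on level structures matches the $\Sp(V)$-action on symplectic bases, turning the set of full level $2$ structures into an $\Sp(V)$-torsor.

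Next, \Cref{lm:aronhold to basis} supplies an $\Sp(V)$-equivariant bijection between symplectic bases of $V$ and Aronhold bases of $W = V \cup QV$. Then \Cref{lm:theta to q} supplies a $V$-equivariant, hence $\Sp(V)$-equivariant, identification of $QV$ with the set of theta characteristics of $C$, and it matches the two $\Sp(V)$-orbits on $QV$ (distinguished by the Arf invariant) with even and odd theta characteristics respectively. This extends uniquely to $W = V \cup QV$ by acting as the identity on the copy of $V$, and transports Aronhold bases of $W$ to Aronhold bases of theta characteristics, because the defining condition on an Aronhold basis is expressed purely in terms of the Arf invariant, which is preserved by Mumford's identification.

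Composing the two bijections yields the desired equivalence between full level $2$ structures and Aronhold bases of theta characteristics. The only point to verify is equivariance for the induced $\Sp_{2g}(2)$-action, but since each intermediate map is $\Sp(V)$-equivariant and both target torsor structures are pulled back from $\Sp(V)$ through the same identification $\Sp(V) \simeq \Sp_{2g}(2)$, this is automatic. I do not anticipate any substantive obstacle here: the lemma is essentially a bookkeeping statement combining the two previous ones, with all the nontrivial content (the quadratic refinement of the Weil pairing by theta characteristics) already absorbed into \Cref{lm:theta to q}.
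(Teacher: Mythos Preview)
Your proposal is correct and follows exactly the approach indicated in the paper, which simply states that the lemma follows from the combination of \Cref{lm:aronhold to basis} and \Cref{lm:theta to q}. You have merely unpacked this one-line observation in more detail, verifying the $\Sp(V)$-equivariance that the paper leaves implicit.
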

In the case of non-hyperelliptic smooth curves of genus three, whose canonical model is a smooth quartic plane curve, Aronhold basis corresponds to sets of seven bitangents such that no three of them intersects the quartic curve in the same locus of intersection with a conic.
\subsection{Geometric markings and level structures}
In what follows, we adopt the same notation used in \Cref{sec:deg two dp}. Let $X$ be a del Pezzo surface of degree two: recall that the Geiser involution $\iota\colon X \to X$ acts as $-1$ on $\operatorname{Pic}^0(X)$, the orthogonal complement of $K_X$, and that the root system generating this lattice is of type $\sfE_7$. The involution induces a splitting $W(\sfE_7)\simeq \Sp_6(2) \times \{ \pm 1 \}$. 
In particular, the involution $-1$ acts on $\sDP_2^{\rm m}$, the stack of geometrically marked del Pezzo surfaces, thus the quotient $\sDP_2^{\rm m}/\{ \pm 1 \}$ inherits a structure of $\Sp_6(2)$-torsor over $\sDP_2$. 
\begin{prop}\label{prop:iso DPm and M3(2)}
    There is an $\Sp_6(2)$-equivariant isomorphism $\sDP_2^{\rm m}/\{ \pm 1\} \simeq \sDP_2\times_{\sM_3} \sM_3(2)$ of $\Sp_6(2)$-torsors over $\sDP_2$.
\end{prop}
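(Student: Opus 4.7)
The plan is to recognize both sides as $\Sp_6(2)$-torsors over $\sDP_2$ and construct an $\Sp_6(2)$-equivariant morphism between them, which is then automatically an isomorphism of torsors. On the left, $\sDP_2^{\rm m}\to\sDP_2$ is a $W(\sfE_7)$-torsor, and the classical splitting $W(\sfE_7)\simeq\Sp_6(2)\times\{\pm 1\}$, with $\{\pm 1\}$ the central subgroup generated by $-\operatorname{id}$ on the $\sfE_7$-lattice, makes $\sDP_2^{\rm m}/\{\pm 1\}$ into an $\Sp_6(2)$-torsor; recall that this $\{\pm 1\}$ is realized geometrically as the Geiser involution, which acts as $-1$ on $\operatorname{Pic}^0$. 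The right-hand side is an $\Sp_6(2)$-torsor by base change of $\sM_3(2)\to\sM_3$.

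The morphism would be defined on objects by sending a family of geometrically marked del Pezzos $(X,\psi)\to S$, with branch quartic $C\to S$ of the double cover $X\to P$, to a canonical full level 2 structure on $C$. Explicitly: by \Cref{prop:geom mark vs real}, $\psi$ determines a blowing-down structure, hence seven distinguished exceptional $(-1)$-curves $E_1,\ldots,E_7$ on $X$. The Geiser involution $\iota\colon X\to X$ pairs $(-1)$-curves, and the image of each pair $\{E_i,\iota^{*}E_i\}$ under $X\to P$ is a bitangent line $B_i$ of $C$. Viewing $B_i$ as an odd theta characteristic via \Cref{lm:theta to q}, the classical theorem of Aronhold ensures that $\{B_1,\ldots,B_7\}$ is an Aronhold basis of odd theta characteristics, and \Cref{lm:level structures to aronhold} then produces a full level 2 structure on $C$. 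Since the central $\{\pm 1\}\subset W(\sfE_7)$ acts on markings by replacing each $E_i$ with $\iota^{*}E_i$, while the construction depends only on the unordered pairs $\{E_i,\iota^{*}E_i\}$, the morphism factors through $\sDP_2^{\rm m}/\{\pm 1\}$.

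It remains to verify $\Sp_6(2)$-equivariance. By functoriality it suffices to work over a single geometric point and to check that the quotient action of $W(\sfE_7)/\{\pm 1\}$ permuting blowing-down structures of a fixed $X$ matches, after transporting through exceptional curves $\mapsto$ bitangents $\mapsto$ odd theta characteristics $\mapsto$ symplectic basis of $\H^1_{\text{\'{e}t}}(C,\ZZ/2)$ (\Cref{lm:aronhold to basis} and \Cref{lm:theta to q}), the standard $\Sp_6(2)$-action on level 2 structures. This is essentially the classical identification of the $\sfE_7$-root system modulo its center with the symplectic geometry of the $2$-torsion of $\operatorname{Jac}(C)$.

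I expect the main obstacle to be this equivariance compatibility, together with ensuring that the Aronhold picture, classical over an algebraically closed field, extends to arbitrary bases $S$; the latter should follow by spreading out, using that all objects in play are defined by smooth and proper morphisms and the relevant discrete invariants are locally constant. Once the equivariant morphism of $\Sp_6(2)$-torsors is in place, it is automatically an isomorphism, concluding the proof.
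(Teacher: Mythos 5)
Your proposal matches the paper's proof in all essentials: both construct the morphism by sending the exceptional curves $E_1,\ldots,E_7$ of a geometric marking to their bitangent images under the double cover, recognize these as an Aronhold basis yielding a level $2$ structure via \Cref{lm:level structures to aronhold}, and observe that replacing $E_i$ by $\iota^*E_i$ gives the same bitangents, so the map factors through the quotient by $\{\pm 1\}$ and, being an equivariant map of $\Sp_6(2)$-torsors, is an isomorphism. Your added remarks on equivariance and spreading out over a general base are fine elaborations of details the paper leaves implicit.
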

\begin{proof}
    Let $X\to S$ be a del Pezzo surface, with $C\subset X$ the branching divisor. A geometric marking of $X$ determines seven families of exceptional divisors $E_1,\ldots, E_7$. By taking the image of $E_i$ through the double cover $X\to P$, we obtain seven families of bitangents of $C$, the ramification divisor of the double cover. These families form an Aronhold basis, hence by \Cref{lm:level structures to aronhold} they determine a full level $2$ structure on $C$. This defines a morphism $\sDP_2^{\rm m} \to \sDP_2\times_{\sM_3} \sM_{3}(2)$.
    
    If instead of taking the $E_i$'s we were to take the divisors $\iota^*E_i$'s conjugated to the $E_i$'s via the involution, the resulting level $2$ structure on $C$ would be the same. This shows that the morphism above factors through $\sDP_2^{\rm m}/\{\pm 1\}$. 
\end{proof}
\begin{cor}\label{cor:comm diag dp and m3}
    The diagrams of algebraic stacks
    \[
    \begin{tikzcd}
        \sDP_{2,1} \ar[r,"\sDP_{2,1}^{\rm m}"] \ar[d] & \sB W(\sfE_7) \ar[d] \\
        \sM_{3,1} \ar[r, "\sM_{3,1}(2)"] & \sB \Sp_6(2)
    \end{tikzcd}, \quad
    \begin{tikzcd}
        \sDP_2 \ar[r,"\sDP_2^{\rm m}"] \ar[d] & \sB W(\sfE_7) \ar[d] \\
        \sM_3 \ar[r, "\sM_3(2)"] & \sB \Sp_6(2)
    \end{tikzcd}
    \]
    are commutative.
\end{cor}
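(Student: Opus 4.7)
The plan is to reduce both commutativity statements to \Cref{prop:iso DPm and M3(2)} together with a base-change argument. Recall that, given a group homomorphism $\pi\colon H \to G$, commutativity of a square of the form
\[\begin{tikzcd} \sX \ar[r, "P"] \ar[d, "f"] & \sB H \ar[d] \\ \sY \ar[r, "Q"] & \sB G \end{tikzcd}\]
in which the horizontal arrows are the classifying morphisms of an $H$-torsor $P\to \sX$ and a $G$-torsor $Q\to \sY$ amounts exactly to the datum of a $G$-equivariant isomorphism between the contracted product $P \times^H G$ (the pushforward of $P$ along $\pi$) and the pullback $f^*Q = \sX \times_{\sY} Q$.

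For the right-hand square I would take $\sX = \sDP_2$, $\sY = \sM_3$, $H = W(\sfE_7)$, $G = \Sp_6(2)$, with $\pi$ the quotient induced by the splitting $W(\sfE_7)\simeq \Sp_6(2) \times \{\pm 1\}$ recorded at the start of \Cref{sec:deg two dp}. Since $\ker(\pi)=\{\pm 1\}$, the contracted product $\sDP_2^{\rm m}\times^{W(\sfE_7)}\Sp_6(2)$ is canonically identified with $\sDP_2^{\rm m}/\{\pm 1\}$. The $\Sp_6(2)$-equivariant isomorphism $\sDP_2^{\rm m}/\{\pm 1\} \simeq \sDP_2 \times_{\sM_3} \sM_3(2)$ established in \Cref{prop:iso DPm and M3(2)} is then exactly the data needed to witness commutativity of the right-hand square.

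For the left-hand square, I would observe that $\sDP_{2,1}$ is by definition $\sDP_2 \times_{\sM_3} \sM_{3,1}$, and that the torsors $\sDP_{2,1}^{\rm m}\to \sDP_{2,1}$ and $\sM_{3,1}(2)\to \sM_{3,1}$ are the pullbacks of $\sDP_2^{\rm m}$ and $\sM_3(2)$ along the respective forgetful morphisms $\sDP_{2,1}\to \sDP_2$ and $\sM_{3,1}\to\sM_3$. Hence the left square is obtained from the right one by base change along $\sM_{3,1}\to\sM_3$, and commutativity is preserved under base change. No serious obstacle is anticipated: the content of the corollary is essentially a repackaging of \Cref{prop:iso DPm and M3(2)}, combined with the observation that both diagrams are compatible under the pointed/unpointed base change.
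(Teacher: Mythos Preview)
Your argument is correct and matches the paper's approach: the corollary is stated without proof precisely because it is an immediate repackaging of \Cref{prop:iso DPm and M3(2)}, and you have spelled out exactly the standard translation (pushforward of the $W(\sfE_7)$-torsor along the quotient is $\sDP_2^{\rm m}/\{\pm 1\}$, which the proposition identifies with the pulled-back $\Sp_6(2)$-torsor), together with the base-change observation for the pointed case. One minor correction: the splitting $W(\sfE_7)\simeq \Sp_6(2)\times\{\pm 1\}$ is recorded at the beginning of the subsection on geometric markings and level structures, not in \Cref{sec:deg two dp}.
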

\begin{prop}
    The $W(\sfE_7)\times\bmu_2$-torsor $\sDP_{2, \rm ord}^{\rm aux} \to \sDP_{2,\rm ord}$ is versal.
\end{prop}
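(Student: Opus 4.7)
The plan is to exploit \Cref{prop:torus parametrization}, which identifies $\sDP_{2,\rm ord}^{\rm aux}$ equivariantly with the open subscheme $T\smallsetminus D_T$ of the split torus $T=\operatorname{Hom}_{\rm grp}(\Phi_{\sfE_7}\otimes\ZZ,\Gm)$, and then to invoke the classical unirationality of twisted algebraic tori over infinite fields. Set $G:=W(\sfE_7)\times\bmu_2$. The composition of the $W(\sfE_7)$-torsor $\sDP_{2,\rm ord}^{\rm m}\to\sDP_{2,\rm ord}$ (obtained by base change from $\sDP_2^{\rm m}\to\sDP_2$) with the $\bmu_2$-torsor $\sDP_{2,\rm ord}^{\rm aux}\to\sDP_{2,\rm ord}^{\rm m}$ of \Cref{lm:aux is mu2 torsor} is a $G$-torsor, and via \Cref{prop:torus parametrization} it presents $\sDP_{2,\rm ord}$ as the quotient stack $[(T\smallsetminus D_T)/G]$.

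With this presentation in hand, I would reduce versality to a statement about rational points of twisted tori. Fix an infinite field $F\supset\bfk$, a $G$-torsor $Q\to\Spec(F)$, and a dense open substack $U\subset\sDP_{2,\rm ord}$, and let $V\subset T\smallsetminus D_T$ be the $G$-invariant preimage of $U$. By the standard correspondence between morphisms to a quotient stack and equivariant morphisms of torsors, giving a morphism $\Spec(F)\to U$ whose associated $G$-torsor is isomorphic to $Q$ is the same as giving an $F$-point of the twist ${}^Q V\subset {}^Q(T\smallsetminus D_T)$. Versality therefore amounts to the assertion that ${}^Q V(F)\neq \emptyset$.

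At this point the argument closes by invoking the classical fact, due to Voskresenskii, that every algebraic torus over an infinite field is unirational and hence has Zariski-dense rational points. Since the $G$-action on $T$ is by linear automorphisms of the cocharacter lattice, the twist ${}^Q T$ is an $F$-form of $T$ and is again an algebraic torus over $F$. Its $F$-points are therefore Zariski-dense, so every non-empty open subscheme contains an $F$-point; applying this to the non-empty open ${}^Q V\subset {}^Q T$ produces the required $F$-point of $U$ whose pulled-back torsor is $Q$.

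The main obstacle I anticipate is the bookkeeping in the first step, namely verifying carefully that $\sDP_{2,\rm ord}^{\rm aux}\to\sDP_{2,\rm ord}$ is genuinely a $G$-torsor (in particular that the $W(\sfE_7)$- and $\bmu_2$-actions combine into a free action of the product and yield the quotient stack description). Once this is settled, the remaining steps are formal consequences of the twisting formalism and of the unirationality of algebraic tori, both of which are standard.
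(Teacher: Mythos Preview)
Your proposal is correct and follows essentially the same route as the paper's proof: identify $\sDP_{2,\rm ord}^{\rm aux}$ with $T\smallsetminus D_T$ via \Cref{prop:torus parametrization}, twist by the given $G$-torsor $Q$, and use unirationality of the resulting $F$-torus to find the required $F$-point. Your treatment is in fact slightly more careful than the paper's, which phrases the goal only as lifting $\Spec(K)\to\sB G$ to $[T/G]$ and leaves implicit the passage to an arbitrary dense open; you make this explicit by invoking Zariski density of rational points on a unirational variety over an infinite field. The ``bookkeeping'' you flag is not a genuine obstacle: the $G$-action on $T$ defined in the paper is visibly by automorphisms of the character lattice (the $W(\sfE_7)$-factor acts on $\Phi_{\sfE_7}\otimes\ZZ$ and the $\bmu_2$-factor by $\pm 1$), so ${}^Q T$ is indeed an $F$-torus, and the torsor structure of $\sDP_{2,\rm ord}^{\rm aux}\to\sDP_{2,\rm ord}$ is already packaged into \Cref{prop:torus parametrization} and \Cref{cor:comm diagram aux}.
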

\begin{proof}
    Set $G=W(\sfE_7)\times\bmu_2$. We proved in \Cref{prop:torus parametrization} that $\sDP_{2, \rm ord}^{\rm aux}$ is isomorphic to an open subset of a split torus $T$ of rank $7$. Let $Q \to \Spec(K)$ be a $G$-torsor: then our claim amounts to showing that the classifying morphism $\Spec(K) \to \sB G$ lifts to a morphism $\Spec(K) \to [T/G]$. Equivalently, we have to produce a rational section of the twisted $K$-torus $(T\times Q)/G$. This follows from every twisted torus being unirational.
\end{proof}
\begin{cor}\label{cor:inj m31}
    We have that
    \begin{enumerate}
        \item the morphism $\sDP_{2,\rm ord} \to \sB (W(\sfE_7)\times \bmu_2)$ induces an injective homomorphism $\Inv(\sB (W(\sfE_7)\times\bmu_2)) \to \Inv(\sDP_{2,\rm ord})$,
        \item the morphism $\sDP_{2} \to \sB W(\sfE_7)$ induces an injective pullback homomorphism $\Inv(\sB W(\sfE_7)) \to \Inv(\sDP_2)$,
        \item the morphism $\sM_{3,1} \to \sB \Sp_6(2)$ induces an injective pullback homomorphism $\Inv(\sB\Sp_6(2)) \to \Inv(\sM_{3,1})$.
    \end{enumerate}
\end{cor}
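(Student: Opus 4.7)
All three injectivity statements will follow by reducing to \Cref{lm:versal implies injective} applied to the versal torsor $\sDP_{2,\rm ord}^{\rm aux} \to \sDP_{2,\rm ord}$. Part~(1) is immediate: by the preceding proposition, $\sDP_{2,\rm ord}^{\rm aux} \to \sDP_{2,\rm ord}$ is a versal $W(\sfE_7)\times\bmu_2$-torsor, so \Cref{lm:versal implies injective} yields the claim.

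For part~(2), I would chain (1) with \Cref{cor:comm diagram aux}. Suppose $\alpha \in \Inv(\sB W(\sfE_7))$ pulls back to zero in $\Inv(\sDP_2)$. Restricting further along $\sDP_{2,\rm ord}\to \sDP_2$, it pulls back to zero in $\Inv(\sDP_{2,\rm ord})$. By the commutativity in \Cref{cor:comm diagram aux}, this restriction equals the pullback of $\alpha$ along $\sDP_{2,\rm ord}\to \sB(W(\sfE_7)\times\bmu_2)\to \sB W(\sfE_7)$. Thus the image $\tilde\alpha$ of $\alpha$ in $\Inv(\sB(W(\sfE_7)\times\bmu_2))$ restricts to zero on $\sDP_{2,\rm ord}$, so $\tilde\alpha=0$ by (1). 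But the projection $W(\sfE_7)\times\bmu_2\twoheadrightarrow W(\sfE_7)$ admits a section (the inclusion of the first factor), hence the induced pullback $\Inv(\sB W(\sfE_7))\to \Inv(\sB(W(\sfE_7)\times\bmu_2))$ is split injective, forcing $\alpha=0$.

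For part~(3), I would combine the same idea with the splitting $W(\sfE_7)\simeq \Sp_6(2)\times\{\pm 1\}$ induced by the Geiser involution (used in \Cref{sec:deg two dp}) and the commutative square in \Cref{cor:comm diag dp and m3}. Given $\alpha\in\Inv(\sB\Sp_6(2))$ with zero pullback to $\sM_{3,1}$, its image along $\sB W(\sfE_7)\to \sB\Sp_6(2)$ restricts to zero in $\Inv(\sDP_{2,1})$ by \Cref{cor:comm diag dp and m3}, hence also in $\Inv(\sDP_{2,\rm ord})$ since $\sDP_{2,\rm ord}$ is an open substack of $\sDP_{2,1}$ and the map $\sDP_{2,1}\to\sB W(\sfE_7)$ factors through $\sDP_2$. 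By part~(2) applied to this image (or rather by the same recipe as above, now reading \Cref{cor:comm diagram aux} on top of \Cref{cor:comm diag dp and m3}), the image of $\alpha$ in $\Inv(\sB W(\sfE_7))$ vanishes. Finally, the projection $W(\sfE_7)\to \Sp_6(2)$ coming from the splitting admits a section, so $\Inv(\sB\Sp_6(2))\to\Inv(\sB W(\sfE_7))$ is split injective and we conclude $\alpha=0$.

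The only subtle point is organizing the diagram chase so that the two commutative squares (\Cref{cor:comm diagram aux} and \Cref{cor:comm diag dp and m3}) line up correctly with the factorization $\sDP_{2,\rm ord}\hookrightarrow \sDP_{2,1}\to\sDP_2$; once this compatibility is recorded, the rest is pure formalism. The key inputs, versality of $\sDP_{2,\rm ord}^{\rm aux}$ and the existence of a section for each of the two group quotients $W(\sfE_7)\times\bmu_2\to W(\sfE_7)\to \Sp_6(2)$, have already been established.
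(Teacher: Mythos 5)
Your proposal is correct and follows essentially the same route as the paper: part (1) from versality of $\sDP_{2,\rm ord}^{\rm aux}$ plus \Cref{lm:versal implies injective}, and parts (2)--(3) by combining the commutative squares of \Cref{cor:comm diagram aux} and \Cref{cor:comm diag dp and m3} with the split injectivity of $\Inv(\sB W(\sfE_7))\to\Inv(\sB(W(\sfE_7)\times\bmu_2))$ and $\Inv(\sB\Sp_6(2))\to\Inv(\sB W(\sfE_7))$ coming from the product decompositions. Your element chase, including the explicit passage through $\sDP_{2,\rm ord}\subset\sDP_{2,1}\to\sDP_2$ in part (3), just spells out details the paper leaves implicit.
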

\begin{proof}
    First observe that, for $G$, $H$ algebraic groups, the pullback homomorphism $\Inv(\sB H) \to \Inv(\sB (G\times H))$ is injective.
    Now, point (1) is a straightforward application of \Cref{lm:versal implies injective}. 
    
    The pullback homomorphism $\Inv(\sB W(\sfE_7)) \to \Inv(\sB (W(\sfE_7)\times \bmu_2))$ is injective, hence the composition $\Inv(\sB W(\sfE_7)) \to \Inv (\sDP_{2,\rm ord})$ is injective. Applying \Cref{cor:comm diagram aux} we deduce point (2). 
    
    Similarly, point (3) follows by applying \Cref{cor:comm diag dp and m3}, because $W(\sfE_7)$ is isomorphic to $ \Sp_6(2)\times\ZZ/2$, thus the pullback homomorphism $\Inv(\sB \Sp_6(2)) \to \Inv(\sB W(\sfE_7))$ is injective.
\end{proof}
\begin{rmk}
    As a matter of facts, we have proved that the $\Sp_6(2)$-torsor $\sM_3(2)\to\sM_3$ and its pullback to the universal curve $\sM_{3,1} \to \sM_3$ are versal.
\end{rmk}
\begin{proof}[Proof of \Cref{thm:inj}]
    
    The case $\sX=\sM_{3,1}$ has already been proved in \Cref{cor:inj m31}. Thanks to the factorization \eqref{eq:composition}, this also proves the cases $\sX=\sM_3$, $\sM_3^{\rm ct}$ and $\sA_3$. 

    Let $n\geq 2$ and pick any $(n+1)$-marked smooth rational curve $(D,p_1,\ldots,p_{n+1})$. Given a pointed smooth curve $(C,q)$ of genus three, we can define an $n$-marked smooth curve of genus three $C'$ by gluing $C$ and $D$ at the points $q$ and $p_{n+1}$; this defines an embedding $\sM_{3,1} \hookrightarrow \sM_{3,n}^{\rm ct}$ and the composition with the forgetful morphism $\sM_{3,n}^{\rm ct} \to \sM_3$ is equal to the universal curve $\sM_{3,1}\to\sM_3$. The injectivity of the pullback along $\sM_{3,1} \to \sB\Sp_6(2)$ then implies the remaining cases.
\end{proof}
\begin{rmk}
    The same arguments used for proving \Cref{thm:inj} works in genus two and one. The injectivity of the pullback homomorphisms along $\sM_{1,1} \to \sB \Sp_2(2)\simeq \sB S_2$ and $\sM_{2} \to \sB \Sp_4(2)\simeq \sB S_4$ has already been proved in \cite{Pir} and \cite{DLP}. To extend the result to the pointed cases, the only non trivial case to prove is the injectivity along of the pullback to $\sM_{2,1}$. 
    
    This follows by showing that the $S_6$-torsor $\sM_{2,1}(2)\to\sM_{2,1}$ is versal, which can be done by relating this torsor with the $S_6$-torsor $\sM_{0,6}\to \sM_{0.6}/S_6$, whose twisted forms are unirational.
\end{rmk}
\subsection{Cohomological invariants of $\Sp_6(2)$}
If we ignore what the cohomological invariants of $\sB\Sp_6(2)$ are, \Cref{thm:inj} does not tell us much; luckily, the invariants of $\sB\Sp_6(2)$ are easy to deduce leveraging the isomorphism of $\Sp_6(2)\times\ZZ/2 $ with $ W(\sfE_7)$, the Weyl group of the root system $\sfE_7$. 

The computation of the cohomological invariants of Weyl groups of root systems has been achieved independently by Hirsch and Serre. 
\begin{prop}[\cite{Hir}*{8.2}, \cite{Ser}*{Theorem A and Example 3}]
    Assume that $\operatorname{char}(\bfk)\neq 2$ and that the cycle module $\M$ is annihilated by $2$. Then $\Inv(W(\sfE_7))$ is a free $\M^{\bullet}(\bfk)$-module, with a basis given by
    \[ \Inv(\sB W(\sfE_7), \M)\simeq \M^{\bullet}(\bfk)\langle 1, \widetilde{w}_1, \widetilde{w}_2, \widetilde{w}_3, f_3, \widetilde{w}_1 f_3, \widetilde{w}_4, \widetilde{w}_5, \widetilde{w}_6, \widetilde{w}_7 \rangle \]
\end{prop}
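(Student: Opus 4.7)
The statement is attributed directly to Hirsch and Serre, so the plan is essentially to invoke their general results on Weyl groups and then specialize to the case of $W(\sfE_7)$. My approach would be organized around Serre's Theorem A, which asserts that for a Weyl group $W$ of a root system $\Phi$, with mod $2$ coefficients, the cohomological invariants of $\sB W$ are generated as an $\M^{\bullet}(\bfk)$-module by the pullbacks of the Galois-Stiefel-Whitney classes along the reflection representation $W\hookrightarrow \O(V)$, together with finitely many exceptional classes indexed by the irreducible components of $\Phi$ of exceptional type. For $\sfE_7$ the reflection representation has rank $7$, so the Galois-Stiefel-Whitney classes supply the invariants $1,\widetilde{w}_1,\ldots,\widetilde{w}_7$ in the claimed list; the class $f_3$ is the exceptional generator that Serre associates to the $\sfE_7$-component.

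The plan would then proceed in two steps. First, I would identify the reflection representation of $W(\sfE_7)$: inside $\sfH_7\otimes \bfk$, the subspace $\sfH_7^0\otimes\bfk$ orthogonal to $-K_7 = 3H-\sum E_i$ is a $7$-dimensional representation of $W(\sfE_7)$ on which the group acts by orthogonal transformations (for the negative-definite quadratic form), giving the classifying map $\sB W(\sfE_7)\to \sB \O_7$. Pulling back the Galois-Stiefel-Whitney classes of $\sB \O_n$, as introduced in \Cref{rmk:origin} referenced in the introduction, produces the $\widetilde{w}_i$ for $1\leq i\leq 7$. Second, I would note that Serre's classification of exceptional generators for Weyl groups with a $2$-torsion cycle module produces exactly one further class in degree $3$ in the $\sfE_7$ case, namely $f_3$, and that $\widetilde{w}_1 f_3$ is then automatically an invariant by multiplicativity.

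For freeness over $\M^{\bullet}(\bfk)$, I would invoke the structural part of Serre's theorem, or equivalently the explicit presentation in Hirsch: the generators are proved to be $\M^{\bullet}(\bfk)$-linearly independent by evaluating each one on a carefully chosen tower of versal torsors, constructed from $\bfk$-points of the maximal $2$-elementary abelian subgroups of $W(\sfE_7)$; the resulting matrix of evaluations is upper-triangular in a natural grading, ruling out any nontrivial relation. I would not reproduce this evaluation argument in detail, as it is already carried out in the cited works.

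The main obstacle, if one wished to give a self-contained account, would be verifying Serre's list of exceptional invariants for $\sfE_7$ and confirming that no additional generator appears. In practice I would avoid this by simply quoting \cite{Hir}*{8.2} and \cite{Ser}*{Theorem A and Example 3}, which together yield the displayed presentation with no additional work beyond matching conventions for the Stiefel-Whitney classes $\widetilde{w}_i$ and the exceptional class $f_3$.
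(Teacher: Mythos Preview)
The paper gives no proof of this proposition at all; it is stated as a citation of Hirsch and Serre, with only \Cref{rmk:origin} afterwards to explain informally where the generators come from. Your proposal to simply invoke \cite{Hir}*{8.2} and \cite{Ser}*{Theorem A and Example 3} is therefore exactly what the paper does, and the outline you sketch of how those references work is reasonable background but not something the paper itself supplies.

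One small discrepancy worth flagging: you describe the $\widetilde{w}_i$ as pullbacks along the rank $7$ reflection representation $W(\sfE_7)\hookrightarrow\O_7$, whereas the paper's \Cref{rmk:origin} defines them via the embedding $W(\sfE_7)\subset W(\sfE_8)\subset\O_8$. Both conventions appear in the literature and both yield a basis, but since you explicitly invoke \Cref{rmk:origin} you should match its $\O_8$ convention rather than the $\O_7$ one.
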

\begin{rmk}\label{rmk:origin}
    There are several ways to define the generators above. All but two of these generators are pullbacks of Galois-Stiefel-Whitney invariants of $\sB\O_8$: recall that the Weyl group $W(\sfE_7)$ can be embedded in $W(\sfE_8)$, and that the latter acts faithfully via isometries on a vector space of rank eight. The induced embedding $W(\sfE_7)\subset \O_8$ induces a morphism of stacks $\sB W(\sfE_7) \to \sB\O_8$, along which we can pull back the Galois-Stiefel-Whitney invariants $\widetilde{w}_1,\ldots,\widetilde{w}_7$.

    The missing generators are harder to describe: in a nutshell, there is a homomorphism $W(\sfE_7) \to \Sm_{2016} \to \O_{2016}$ induced by the action of $W(\sfE_7)$ on the space of cosets $|U \backslash W(\sfE_7)|$, where $U\subset W(\sfE_7)$ is a certain subgroup. 
    Thus, to a $W(\sfE_7)$-torsor one can associate a quadratic form of rank $2016$. This, regarded in the Witt ring of quadratic forms, turns out to belong to $I^3$, the third power of the fundamental ideal. To this class, scaled by $\langle 2\rangle$, we apply the Milnor-Witt isomorphism $e_3\colon I^{3}/I^{4} \to \K^{\rm Mil}_3 /2$, thus obtaining an invariant $f'_3$. After a further modification, we obtain an invariant $f_3$ which is the lacking generator in degree three, and the product of $f_3$ with the pullback of $\widetilde{w}_1$ gives the last missing generator.
\end{rmk}
\begin{cor}\label{cor:inv Sp6}
    The cohomological invariants of $\sB\Sp_6(2)$ form a free $\M^{\bullet}(\bfk)$-module of rank four, generated by \[ \Inv(\sB\Sp_6(2), \M)\simeq \M^{\bullet}(\bfk)\langle 1, \widetilde{w}_2, f_3,\widetilde{w}_4, \widetilde{w}_6 \rangle. \]
    The invariants $\widetilde{w}_i$ comes from the Galois-Stiefel-Whitney classes of $\sB \O_8$.
\end{cor}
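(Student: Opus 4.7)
The plan is to combine the Hirsch-Serre computation of $\Inv(\sB W(\sfE_7),\M)$ with the splitting $W(\sfE_7)\simeq \Sp_6(2)\times \ZZ/2$ recorded in \Cref{sec:deg two dp}, via a K\"unneth-type decomposition.

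First, the splitting induces an equivalence of classifying stacks $\sB W(\sfE_7)\simeq \sB\Sp_6(2)\times \sB\ZZ/2$. Since $\M$ is annihilated by $2$ and $\operatorname{char}(\bfk)\neq 2$, the invariants of $\sB\ZZ/2$ form the free module $\M^\bullet(\bfk)\langle 1,t\rangle$, with $t$ the tautological degree-one class. I would establish the K\"unneth-type formula
\[\Inv(\sB\Sp_6(2)\times \sB\ZZ/2,\M)\simeq \Inv(\sB\Sp_6(2),\M)\oplus t\cdot \Inv(\sB\Sp_6(2),\M).\]
The construction is as follows: for $\alpha$ an invariant and $(P,a)$ a pair with $P$ an $\Sp_6(2)$-torsor and $a\in K^\times/K^{\times 2}$, set $\alpha_0(P):=\alpha(P,1)$, so that $\alpha_0\in\Inv(\sB\Sp_6(2),\M)$. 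Using the freeness of $\Inv(\sB\ZZ/2,\M)$ and tracking the condition $(\star)$, one shows that $\alpha-\alpha_0$ is uniquely of the form $t\cdot\alpha_1$ with $\alpha_1\in\Inv(\sB\Sp_6(2),\M)$.

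Granting this, a Poincar\'e-series match against the Hirsch-Serre basis in degrees $0,1,2,3,3,4,4,5,6,7$ forces $\Inv(\sB\Sp_6(2),\M)$ to be free of rank five with generators in degrees $0,2,3,4,6$. To identify them as the stated list, I would observe that the restriction of $\widetilde{w}_1\in\Inv(\sB W(\sfE_7),\M)$ to $\sB\ZZ/2$ is nontrivial, because $-1\in W(\sfE_7)$ has nontrivial determinant on the rank-$7$ reflection representation, while its restriction to $\sB\Sp_6(2)$ vanishes, because $\Sp_6(2)=\ker(\det)$ on that representation; hence $\widetilde{w}_1$ agrees with $t$ under the K\"unneth splitting. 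The tensor-product formula for Stiefel-Whitney classes of $V_0\otimes\chi$, where $V_0$ is the $\Sp_6(2)$-reflection representation and $\chi$ is the sign character of the $\ZZ/2$-factor, then places each odd-indexed class $\widetilde{w}_{2k+1}$ together with $\widetilde{w}_1 f_3$ into the $t$-summand, leaving $1,\widetilde{w}_2,f_3,\widetilde{w}_4,\widetilde{w}_6$ in the $\Inv(\sB\Sp_6(2))$-summand. By construction the even Stiefel-Whitney classes come from $\sB\O_8$ via the composition $\sB\Sp_6(2)\hookrightarrow \sB W(\sfE_7)\to \sB\O_8$.

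The main obstacle is the K\"unneth identification in the second paragraph: while morally clear from the retract structure of $\sB\Sp_6(2)\times\sB\ZZ/2$, proving that $\alpha-\alpha_0$ is divisible by $t$ through an actual invariant of $\sB\Sp_6(2)$ requires carefully propagating the $(\star)$-condition across the splitting. Once this is in place, the identification of the generators reduces to a concrete, if fiddly, computation with the tensor-product formula for Stiefel-Whitney classes.
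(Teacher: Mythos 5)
Your proposal follows essentially the same route as the paper: exploit the splitting $W(\sfE_7)\simeq \Sp_6(2)\times\ZZ/2$, decompose $\Inv(\sB W(\sfE_7),\M)\simeq \Inv(\sB\Sp_6(2),\M)\oplus \Inv(\sB\Sp_6(2),\M)\cdot\widetilde{w}_1$, and read the generators off the Hirsch--Serre basis, which is exactly how the paper argues. The K\"unneth-type step you single out as the main obstacle is precisely the product formula the paper simply cites (\cite{GMS}*{Part I, Exercise 16.1}, \cite{Hir}*{Proposition 2.5}), so it can be quoted rather than re-derived, and your degree count and identification of $\widetilde{w}_1$ with the class of the $\ZZ/2$-factor agree with the paper's (implicit) deduction.
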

\begin{proof}
    Write $W(\sfE_7)\times\bmu_2$. Then from the formula for the invariants of a product of groups \cite{GMS}*{Part I, Exercise 16.1}, \cite{Hir}*{Proposition 2.5} we have
    \[\Inv(\sB W(\sfE_7)) \simeq \Inv(\sB\Sp_6(2)) \oplus \Inv (\sB\Sp_6(2))\cdot \widetilde{w}_1,\]
    from which our conclusion follows.
\end{proof}
\subsection{Theta characteristics}
In a previous work \cite{DLP2}, Roberto Pirisi and the author considered the moduli stack $\sS_3^{-} \to \sM_3$ of smooth curves of genus three with an odd theta characteristic as a possible source of cohomological invariants for $\sM_3$. Indeed, this moduli stack induces a morphism $\sM_3 \to \sB S_{28}$, obtained by considering the associated $S_{28}$-torsor $\underline{\operatorname{Isom}}_{\sM_3}(\sS_3^{-},\sM_3^{\times 28}) \to \sM_3$. One might wonder whether this map is a source of cohomological invariants distinct from the one of \Cref{thm:inj}. We show that this is not the case.

Let $V$ be the $\ZZ/2$-vector space of rank $2g$ endowed with the standard symplectic structure. Recall from \Cref{sub:symp} that there is a morphism $\Sp_{2g}(2)=\Sp(V) \to S_{2^{g}(2^{g-1}-1)}$ given by $\Sp(V)$ acting on the subset of $QV$ made of odd quadratic forms.
\begin{prop}\label{prop:no new invs}
    The classifying morphism $\sM_g \to \sB S_{2^g(2^{g-1}-1)}$ given by odd theta characteristics factors through the classifying morphism $\sM_g \to \sB \Sp_{2g}(2)$ given by full level structures of order two. In particular, there is a factorization 
    \[\Inv(\sB S_{2^g(2^{g-1}-1)}) \longrightarrow \Inv(\sB\Sp_{2g}(2)) \longrightarrow \Inv(\sM_g). \]
    The same holds for even theta characteristics as well.
\end{prop}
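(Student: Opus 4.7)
The plan is to exhibit a group homomorphism $\phi\colon\Sp_{2g}(2) \to S_N$, with $N=2^{g}(2^{g-1}-1)$ the number of odd theta characteristics, and to show that the composition $\sM_g \to \sB\Sp_{2g}(2)\xrightarrow{\sB\phi}\sB S_N$ recovers the classifying morphism given by odd theta characteristics. The case of even theta characteristics is identical after swapping $QV^{-}$ for $QV^{+}$.

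To construct $\phi$, let $(V,\Omega)$ denote the standard symplectic $\ZZ/2$-vector space of rank $2g$. The group $\Sp(V)$ acts naturally on the $V$-torsor $QV$ of quadratic forms refining $\Omega$; since the two $\Sp(V)$-orbits on $QV$ are distinguished exactly by the Arf invariant (\Cref{sub:symp}), the subset $QV^{-}\subset QV$ of odd forms is stabilized. Fixing once and for all a bijection $QV^{-}\simeq\{1,\dots,N\}$ identifies the induced permutation representation with the desired homomorphism $\phi$.

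Next I will construct an $\Sp_{2g}(2)$-equivariant morphism $\sM_g(2) \to \underline{\operatorname{Isom}}_{\sM_g}(\sS_g^{-},\sM_g^{\times N})$ over $\sM_g$. A level $2$ structure $\alpha\colon \H^1_{\text{\'{e}t}}(C,\ZZ/2)\simeq V$ induces by push-forward an Arf-preserving bijection between $Q\H^1_{\text{\'{e}t}}(C,\ZZ/2)$ and $QV$; composing with the canonical identification of theta characteristics with quadratic forms of \Cref{lm:theta to q}, and with the chosen enumeration of $QV^{-}$, one obtains an enumeration of the odd theta characteristics of $C$. The morphism so defined is visibly equivariant, intertwining the natural $\Sp_{2g}(2)$-action on $\sM_g(2)$ with the permutation action on the target through $\phi$.

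Since $\sM_g(2)\to\sM_g$ is an $\Sp_{2g}(2)$-torsor, the equivariant morphism above descends, exhibiting the $S_N$-torsor $\underline{\operatorname{Isom}}_{\sM_g}(\sS_g^{-},\sM_g^{\times N})$ as the pullback along $\sM_g\to\sB\Sp_{2g}(2)$ of the $S_N$-torsor on $\sB\Sp_{2g}(2)$ induced by $\phi$; this is precisely the desired factorization, and the statement on cohomological invariants then follows by functoriality. I do not foresee a real obstacle: the key content is the $\Sp$-equivariance in families of the identification in \Cref{lm:theta to q}, which is already implicit in \Cref{lm:level structures to aronhold}, and everything else is routine stacky descent.
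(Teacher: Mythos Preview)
Your proposal is correct and follows essentially the same approach as the paper: both arguments show that a level two structure on $C/B$ canonically labels the odd theta characteristics over $B$, yielding a section of the pulled-back $S_N$-torsor over $\sM_g(2)$ (equivalently, an $\Sp_{2g}(2)$-equivariant map to the Isom-torsor). The only cosmetic difference is that the paper routes this through Aronhold bases via \Cref{lm:level structures to aronhold}, whereas you invoke \Cref{lm:theta to q} and the $\Sp(V)$-action on $QV^{-}$ directly; the content is the same.
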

\begin{proof}
    The statement we want to prove is equivalent to saying that, given the cartesian diagram
    \[
    \begin{tikzcd}
        F \ar[r] \ar[d] & \sM_g(2) \ar[d] \\
        \underline{\operatorname{Isom}}_{\sM_g}(\sS_g^{-},\sM_{g}^{2^g(2^{g-1}-1)}) \ar[r] & \sM_g,
    \end{tikzcd}
    \]
    the $S_{2^g(2^{g-1}-1)}$-torsor $F \to \sM_g(2)$ is trivial.
    This blows down to showing that given a full level two structure on a curve $C\to B$, there are $2^g (2^{g-1}-1)$ odd theta characteristics of $C$ defined over $B$. By \Cref{lm:level structures to aronhold}, the choice of a symplectic basis over $B$ of $\underline{\operatorname{Jac}}_{C/B}[2]$ determines an Aronhold basis of $Q(\underline{\operatorname{Jac}}_{C/B}[2])$ defined over $B$, the latter being identified with theta characteristics. Once an Aronhold basis is defined over $B$, all the theta-characteristics are defined over $B$.
\end{proof}
Below we outline how the invariants coming from odd theta characteristics and the ones coming from level two structures are related. Consider again the Weyl group $W(\sfE_7)$, acting on the Picard group of the generic del Pezzo surface $X$. The double cover $\pi\colon X\to \PP^2$ ramified over the generic quartic $C$ can be used to define an action of $W(\sfE_7)$ on the set of bitangents of $C$, with kernel the reflection of length $7$ of $W(\sfE_7)$. This induces a morphism $W(\sfE_7) \to S_{28}$, which factors through the embedding $\Sp_6(2) \to S_{28}$.

Let $\ZZ/2^{\times 7} \subset W(\sfE_7)$ be a maximal abelian subgroup, generated by the reflections $r_1,\ldots, r_7$ such that $\iota=\prod_i r_i$. Equivalently, the reflections $r_i$ are the reflections with respect to hyperplanes $H_i=\alpha_i^{\perp}$, with $\alpha_1,\ldots, \alpha_7$ a set of roots that forms an orthogonal basis of  $\operatorname{Pic}^0(X)$.

Up to conjugation, the image of $\ZZ/2^{\times 7}$ along $W(\sfE_7)\to S_{28}$ is contained in the maximal abelian subgroup $\ZZ/2^{\times 14} \subset S_{28}$ generated by $14$ pairwise disjoint transpositions. We have therefore induced homomorphisms
\[
    \varphi\colon \ZZ/2^{\times 7} \longrightarrow \ZZ/2^{\times 14}, \quad \varphi^*\colon\Inv (\sB \ZZ/2^{\times 14}) \longrightarrow \Inv (\sB \ZZ/2^{\times 7}).
\]
For $W$ a crystallographic Weyl group with maximal abelian subgroup $H$ having normalizer $N$, the restriction homomorphism $\Inv(\sB W) \to \Inv(\sB H)$ is injective \cite{Ser}*{Theorem B}. We have then a commutative square
\[
\begin{tikzcd}
    \Inv(\sB S_{28}) \ar[r] \ar[d] & \Inv( \sB W(\sfE_7)) \ar[d] \\
    \Inv(\sB \ZZ/2^{\times 14})^{S_{14}} \ar[r] & \Inv(\sB \ZZ/2^{\times 7})
\end{tikzcd}
\]
where all but the top arrow are injective. 

The invariants of $\sB\ZZ/2^{\times n}$ form a free $\M^{\bullet}(\bfk)$-module \cite{Hir}*{3.2}. We briefly recall what the generators are: given a multi-quadratic algebra with an isomorphism $E\simeq\bigtimes_{i=1}^{m}K[x_i]/(x_i^2-u_i)$ define the $\K^{\rm Mil}/2$-invariant $\alpha_i(E)=\{u_i\}$. 
A basis for $\Inv(\sB\ZZ/2^{\times n}, \K^{\rm Mil}/2)$ is given by monomials $\alpha_{i_1}\cdot\ldots\cdot \alpha_{i_m}$ for $I=\{i_1,\ldots, i_m\} \subset [n]$.
As $\M^{\bullet}$ is annihilated by $2$, these invariants can be used to define analogous invariants for such cycle modules.
\begin{lm}
    Given a homomorphism $\varphi\colon \ZZ/2^n \to \ZZ/2^m$ represented by an $m\times n$ matrix with entries $c_{ij}$, the homomorphism $\varphi^*$ of cohomological invariants satisfies
\[ \varphi^*(\alpha_i) = \sum_{j=1}^{n} c_{ij}\alpha_j.\]
\end{lm}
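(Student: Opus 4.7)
The plan is to unwind the definition of $\alpha_i$ via Kummer theory, reducing the claim to the elementary fact that the symbol map $K^{*}\to\K^{\rm Mil}_1(K)/2$ is a group homomorphism. Since $\operatorname{char}(\bfk)\neq 2$, Kummer theory gives for every field extension $K/\bfk$ a natural isomorphism $\H^1_{\text{\'{e}t}}(K,\ZZ/2^{\times n})\simeq (K^{*}/(K^{*})^2)^{n}$, under which a multi-quadratic $K$-algebra $E\simeq\prod_{j=1}^{n}K[x_j]/(x_j^2-u_j)$ corresponds to the tuple $(u_1,\ldots,u_n)$; by construction $\alpha_j(E)=\{u_j\}\in \K^{\rm Mil}_1(K)/2$.

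The next step is to describe the map induced on $\H^1$'s by $\sB\varphi\colon\sB\ZZ/2^{\times n}\to\sB\ZZ/2^{\times m}$ in these Kummer coordinates. Since $\H^1_{\text{\'{e}t}}(K,-)$ commutes with finite products of abelian groups and is functorial in the coefficients, the pushforward of torsors
\[ \varphi_{*}\colon\H^1_{\text{\'{e}t}}(K,\ZZ/2^{\times n})\longrightarrow\H^1_{\text{\'{e}t}}(K,\ZZ/2^{\times m}) \]
is simply the matrix $(c_{ij})$ acting multiplicatively: the $\ZZ/2^{\times m}$-torsor associated to $(u_1,\ldots,u_n)$ corresponds to the tuple whose $i$-th coordinate equals $\prod_{j=1}^{n}u_j^{c_{ij}}$ modulo $(K^{*})^2$. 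With both identifications in hand, the formula becomes a one-line computation:
\[ (\varphi^{*}\alpha_i)(E)\;=\;\alpha_i(\varphi_{*}E)\;=\;\Bigl\{\prod_{j=1}^{n}u_j^{c_{ij}}\Bigr\}\;=\;\sum_{j=1}^{n}c_{ij}\{u_j\}\;=\;\sum_{j=1}^{n}c_{ij}\alpha_j(E), \]
where the penultimate equality uses that $\{-\}\colon K^{*}\to \K^{\rm Mil}_1(K)/2$ is a group homomorphism and that the coefficients $c_{ij}\in\{0,1\}$ have an unambiguous meaning modulo $2$.

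This handles $\K^{\rm Mil}/2$-coefficients; the case of a general cycle module $\M$ annihilated by $2$ then follows via the $\K^{\rm Mil}/2$-module structure on $\M^{\bullet}$, since the classes $\alpha_j$ with $\M$-coefficients are pulled back from their $\K^{\rm Mil}/2$ counterparts. The main (and really only) delicate point is the naturality statement in the second step — matching the pushforward of étale torsors along $\varphi$ with the multiplicative action of the matrix $(c_{ij})$ on Kummer classes. Once that naturality check is made, everything else is formal; but it is the step I would write out most carefully to avoid an indexing or sign slip.
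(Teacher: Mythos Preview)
Your argument is correct and essentially identical to the paper's: both compute $\varphi^*\alpha_i$ by identifying the pushed-forward torsor with the multi-quadratic algebra $\prod_i K[y_i]/(y_i^2-\prod_j u_j^{c_{ij}})$ and then using that $\{-\}$ is a homomorphism. You spell out the Kummer-theory justification for the pushforward more explicitly than the paper does, and you add the remark on general $\M$, but the content is the same.
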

\begin{proof}
    Given a multi-quadratic \'{e}tale $K$-algebra $E=\prod_{j=1}^{n} K[x_j]/(x_j^2-u_j)$, the image through the morphism $\sB \ZZ/2^{\times n}(K) \to \sB\ZZ/2^{\times m} (K)$ is the multi-quadratic algebra $E'=\prod_{i=1}^{m} K[y_i]/(y_i^2 - \prod_j u_j^{c_{ij}})$. By definition of the invariants $\alpha_i$, we get
    \[ \varphi^*\alpha_i (E) = \alpha_i (E') = \{\prod_j u_j^{c_{ij}}\} = \sum c_{ij}\alpha_j (E).\]
\end{proof}
The Galois-Stiefel-Whitney classes $\widetilde{w}_d$ of $\sB S_{28}$, restricted to $\sB \ZZ/2^{\times 14}$, are equal to $\sigma_d(\alpha_1,\ldots, \alpha_{14})$, where $\sigma_d$ is the elementary symmetric polynomial of degree $d$. We have proved the following.
\begin{prop}\label{prop:relation}
    Let $(c_{ij})$ be the $14\times 7$ matrix representing the morphism $\ZZ/2^{\times 7} \to \ZZ/2^{\times 14}$ induced by taking the image of the maximal abelian subgroup of $W(\sfE_7)$ in $S_{28}$. Then the homomorphism $\varphi^*\colon \Inv(\sB S_{28}) \to \Inv(\sB W(\sfE_7))$ sends
    \[ \varphi^* \widetilde{w}_d = \sigma_d \left(\sum_{j=1}^{7} c_{1,j}\alpha_j,\ldots,\sum_{j=1}^{7}c_{14,j}\alpha_j \right).\]
\end{prop}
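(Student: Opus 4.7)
The plan is to reduce the computation to a formal manipulation on the bottom row of the commutative square displayed just above the statement. Since the right vertical arrow $\Inv(\sB W(\sfE_7)) \hookrightarrow \Inv(\sB \ZZ/2^{\times 7})$ is injective by Serre's theorem \cite{Ser}*{Theorem B}, it suffices to identify the image of $\varphi^*\widetilde{w}_d$ under this restriction. By commutativity of the square, this image is equivalently obtained by going down the left vertical arrow first and then across the bottom arrow.

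First, I would invoke the formula recalled in the paragraph immediately preceding the statement: the restriction of $\widetilde{w}_d\in\Inv(\sB S_{28})$ along the left vertical arrow $\sB\ZZ/2^{\times 14}\to \sB S_{28}$ equals $\sigma_d(\alpha_1,\ldots,\alpha_{14})$. Next, since pullback of cohomological invariants is a graded $\M^{\bullet}(\bfk)$-algebra homomorphism, the bottom arrow $\varphi^*$ commutes with the polynomial expression $\sigma_d$. Combining this with the preceding lemma, which gives $\varphi^*\alpha_i=\sum_j c_{ij}\alpha_j$, I obtain
\[
\varphi^*\sigma_d(\alpha_1,\ldots,\alpha_{14})
= \sigma_d\bigl(\varphi^*\alpha_1,\ldots,\varphi^*\alpha_{14}\bigr)
= \sigma_d\Bigl(\sum_{j=1}^{7}c_{1,j}\alpha_j,\ldots,\sum_{j=1}^{7}c_{14,j}\alpha_j\Bigr).
\]
By the commutativity of the diagram, this element of $\Inv(\sB\ZZ/2^{\times 7})$ is precisely the image of $\varphi^*\widetilde{w}_d\in\Inv(\sB W(\sfE_7))$ under the right vertical injection; injectivity then forces the identity stated in the proposition.

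The argument is therefore essentially formal once the commutative square and the restriction formula for Galois–Stiefel–Whitney classes are in hand. The one place where I would pause is to make sure that the matrix $(c_{ij})$ genuinely represents the restriction of $\varphi$ to the chosen maximal abelian subgroups, i.e.\ that the composition $\ZZ/2^{\times 7}\subset W(\sfE_7)\to S_{28}$ lands (up to conjugation) in the maximal abelian subgroup $\ZZ/2^{\times 14}\subset S_{28}$ generated by the fourteen pairwise disjoint transpositions; this is exactly the content of the setup preceding the proposition, so no further work is needed here, and the proof is complete.
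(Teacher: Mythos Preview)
Your proposal is correct and follows exactly the argument the paper lays out in the paragraphs preceding the proposition; indeed, the paper writes ``We have proved the following'' immediately before stating it, and your write-up simply makes explicit the chase through the commutative square using the lemma on $\varphi^*(\alpha_i)$ and the restriction formula $\widetilde{w}_d|_{\ZZ/2^{\times 14}}=\sigma_d(\alpha_1,\ldots,\alpha_{14})$.
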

\begin{rmk}
    The element on the right hand side of the equation comes by construction from $\Inv(\sB \Sp_6(2))$, hence it can be expressed in terms of the generators of the latter.
\end{rmk}
\begin{rmk}\label{rmk:rel hyp}
    Jaramillo-Puentes and Pirisi showed that the pullback along $\sM_g \to \sB S_{2^g}$ of certain Galois-Stiefel-Whitney classes is non-zero, at least when the base field is totally real \cite{JPP}. They achieve this by computing the pullback of these classes on hyperelliptic test curves of genus $g$. 
    
    We can reinterpret their approach as follows: the restriction of the torsor of level structures on the substack $\sH_g$ of hyperelliptic curves admits a reduction of group structure; in other terms, the morphism $\sH_g \to \sB \Sp_{2g}(2)$ factors through the classifying morphism $\sH_g \to \sB S_{2g+2}$ associated to the Weierstrass divisor. 
    
    This means that the approach used to obtain \Cref{prop:relation} can be replicated in this setting as well: indeed, by looking at the induced morphism $\varphi\colon \ZZ/2^{\times (g+1)} \to \ZZ^{\times 2^{g-1}} \subset S_{2^g}$ of maximal abelian subgroups,  we can write down the matrix representing $\varphi$, and get a complete description of the pullback homomorphism $$\Inv(\sB S_{2^g}) \longrightarrow \Inv(\sM_g).$$
\end{rmk}
\subsection{Brauer group}
Roberto Pirisi and the author proved that the Brauer group of $\sM_3$ contains a copy of $\ZZ/2$ generated by the pullback of the second Stiefel-Whitney class of $\sB S_{28}$ \cite{DLP2}. This implies that the latter is equal to the pullback of the unique element of the basis of $\Inv(\Sp_6(2))$ of degree two. 

We deduce the following interpretation of this generator: the group $\Sp_6(2)$ has an \'{e}tale, non-trivial double cover given by the metaplectic group ${\rm Mp}_6(2)$. By taking the associated classifying stacks, we have a $\ZZ/2$-gerbe $\sB {\rm Mp}_6(2) \to \sB \Sp_6(2)$. Our main theorem implies the following slight generalization.
\begin{cor}
    The Brauer group of $\sM_{3,n}$ contains a non-trivial $\ZZ/2$-factor, generated by the pullback along $\sM_{3,n} \to \sM_3$ of the gerbe $\sM_3(2)/{\rm Mp}_6(2) \to \sM_3$. 
\end{cor}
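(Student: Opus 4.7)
The strategy is to identify the Brauer class in question as the pullback of the degree-two invariant $\widetilde{w}_2 \in \Inv^2(\sB\Sp_6(2), \H_{\mu_2})$ and then invoke the injectivity provided by \Cref{thm:inj}. Recall that for a smooth stack $\sX$ with affine stabilizers, a $\mu_2$-gerbe over $\sX$ is classified by its class in $\H^2_{\text{\'{e}t}}(\sX,\mu_2)$, and that the 2-torsion of the Brauer group sits inside $\H^2_{\text{\'{e}t}}(\sX,\mu_2)$; moreover, via the map \eqref{eq:neg} such a class induces a degree-two cohomological invariant, and this assignment is compatible with pullback. So to exhibit a non-trivial $\ZZ/2$-factor in $\operatorname{Br}(\sM_{3,n})$ it suffices to show that the invariant associated to our gerbe is non-zero.

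First I would identify $\widetilde{w}_2\in\Inv^2(\sB\Sp_6(2),\H_{\mu_2})$ with the Brauer class of the metaplectic gerbe $\sB{\rm Mp}_6(2) \to \sB\Sp_6(2)$. The unique non-trivial central extension $1\to\mu_2\to{\rm Mp}_6(2)\to\Sp_6(2)\to 1$ defines a class in $\H^2(\sB\Sp_6(2),\mu_2)$; by \Cref{cor:inv Sp6} there is a unique non-zero element in $\Inv^2(\sB\Sp_6(2),\H_{\mu_2})$, namely $\widetilde{w}_2$, and the authors' earlier observation over $\sM_3$ (that the Brauer generator coincides with the pullback of the second Stiefel-Whitney class of $\sB S_{28}$, which via \Cref{prop:no new invs} and \Cref{cor:inv Sp6} factors through $\widetilde{w}_2$) confirms this identification. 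By the functoriality of the gerbe construction, pulling back $\sB{\rm Mp}_6(2)\to\sB\Sp_6(2)$ along the composition $\sM_{3,n}\to\sM_3\to\sB\Sp_6(2)$ yields exactly the gerbe $\sM_3(2)/{\rm Mp}_6(2)\times_{\sM_3}\sM_{3,n}$ appearing in the statement.

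Finally, \Cref{thm:inj} guarantees that the pullback $\Inv^2(\sB\Sp_6(2),\H_{\mu_2}) \hookrightarrow \Inv^2(\sM_{3,n},\H_{\mu_2})$ is injective, so the image of $\widetilde{w}_2$ is non-zero; hence the corresponding Brauer class on $\sM_{3,n}$ is non-trivial, and since $\widetilde{w}_2$ has order two it generates a $\ZZ/2$-subgroup. The main technical point is the first identification: matching the abstractly defined $\widetilde{w}_2$ (pulled back via $W(\sfE_7)\subset\O_8$) with the obstruction class of the metaplectic central extension. This is however a computation intrinsic to $\sB\Sp_6(2)$ and is already handled by the earlier work cited in the text; once it is in hand, the remainder of the argument is formal, relying only on the injectivity established in the main theorem and the functoriality of pullback.
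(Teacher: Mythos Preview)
Your proposal is correct and follows the same reasoning as the paper: the corollary is stated there without a formal proof, and the surrounding discussion is precisely what you wrote --- identify the class of the metaplectic gerbe with the unique non-zero degree-two invariant $\widetilde{w}_2$ of $\sB\Sp_6(2)$, then invoke \Cref{thm:inj} for non-triviality on $\sM_{3,n}$. Your observation that the identification of $\widetilde{w}_2$ with the metaplectic obstruction is the only substantive point, and that it is handled by the cited earlier work together with the uniqueness in \Cref{cor:inv Sp6}, matches the paper's treatment exactly.
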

\subsection{Mod $2$ cohomology of $\sM_3$ in degree three}
We aim at proving the following.
\begin{prop}\label{thm:degree three}
    Over an algebraically closed field $\bfk$ of characteristic $\neq 2$, the mod $2$ cohomological invariants of $\sM_3$ of degree three come from cohomology. Moreover, we have $\dim \H^3_{\text{\'{e}t}}(\sM_3,\ZZ/2) \geq 2$. 
\end{prop}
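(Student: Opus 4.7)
The proof strategy splits along the two assertions. For the first --- that mod $2$ invariants of $\sM_3$ in degree three come from \'{e}tale cohomology --- \Cref{cor:hom triv} reduces the claim to injectivity of the cycle class map
\[ \operatorname{CH}^2(\sM_3)\otimes \ZZ/2 \longrightarrow \H^4_{\text{\'{e}t}}(\sM_3,\ZZ/2). \]
I would establish this via the open presentation $\sM_3\smallsetminus\sH_3 \simeq [(W\smallsetminus D_W)/\GL_3]$ of the non-hyperelliptic locus as a quartics quotient, combined with the excision long exact sequence along $\sH_3\hookrightarrow\sM_3$. The $\GL_3$-equivariant Chow ring of $[(W\smallsetminus D_W)/\GL_3]$ is computable, and together with the boundary contribution from the hyperelliptic divisor one can check by inspection that every mod $2$ generator of $\operatorname{CH}^2(\sM_3)$ has non-zero image in $\H^4_{\text{\'{e}t}}$.

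Combining the first half with \Cref{prop:exact seq 3} yields a surjection $\H^3_{\text{\'{e}t}}(\sM_3, \ZZ/2)\twoheadrightarrow \operatorname{Inv}^3(\sM_3, \ZZ/2)$. Over algebraically closed $\bfk$ one has $\M^i(\bfk)=0$ for $i\geq 1$; applying \Cref{thm:inj} and \Cref{cor:inv Sp6} yields a non-zero invariant $f_3 \in \operatorname{Inv}^3(\sM_3,\ZZ/2)$, hence $\dim \H^3_{\text{\'{e}t}}(\sM_3, \ZZ/2)\geq 1$. To push this to $\geq 2$, I would exhibit a non-trivial class in the kernel of the surjection --- that is, a non-zero negligible class in degree three. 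My candidate is the Bockstein $\beta(\widetilde{w}_2)$ of the Hasse--Witt class of \Cref{rmk:origin}, whose non-vanishing and negligibility as an invariant can be tested on the versal torus chart $\sDP_{2,\rm ord}^{\rm aux}\simeq T\smallsetminus D_T$ from \Cref{prop:torus parametrization}, where both $\widetilde{w}_2$ and $f_3$ admit explicit descriptions in terms of the character lattice of $T$.

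The main technical hurdles are twofold. First, actually carrying out the explicit mod $2$ Chow-to-cohomology comparison underlying the first paragraph, which is more delicate than its rational counterpart. Second, identifying a non-trivial negligible class in $\H^3$: since any class pulled back from $\sB\Sp_6(2)$ restricts to $\ZZ/2\cdot f_3$ in $\operatorname{Inv}^3(\sB\Sp_6(2), \ZZ/2)$, one must either show that $\beta(\widetilde{w}_2)$ is already negligible at the classifying-stack level (in which case its non-triviality on $\sM_3$ becomes the central question), or argue that two a priori distinct lifts of $f_3$ coming from different constructions differ by a non-zero negligible class. The torus chart of \Cref{prop:torus parametrization} provides a natural testing ground for either scenario.
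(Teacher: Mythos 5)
Your overall skeleton (reduce the first claim to injectivity of the cycle class map via \Cref{cor:hom triv}, use $f_3$ to see $\operatorname{Inv}^3(\sM_3,\H_{\ZZ/2})\neq 0$, then look for a non-zero class in the kernel of $\H^3_{\text{\'{e}t}}(\sM_3,\ZZ/2)\to\operatorname{Inv}^3(\sM_3,\H_{\ZZ/2})$ to push the dimension to two) matches the paper, but both steps where the real work happens are left as unproved ``hurdles'', and the substitutes you sketch do not close them. For the first claim, you propose to verify injectivity of $\operatorname{CH}^2(\sM_3)\otimes\ZZ/2\to\H^4_{\text{\'{e}t}}(\sM_3,\ZZ/2)$ ``by inspection'' from the quartics presentation of $\sM_3\smallsetminus\sH_3$ and excision; but you give no mechanism for detecting non-vanishing in $\H^4_{\text{\'{e}t}}(\sM_3,\ZZ/2)$, which is not a group one can read off from the equivariant Chow ring (and injectivity requires handling all non-zero combinations of $\lambda_1^2,\lambda_2$, not just generators). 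The paper avoids computing any cohomology of $\sM_3$ at all: it proves $\operatorname{CH}^2(\sM_3)\otimes\ZZ/2\hookrightarrow\operatorname{CH}^2(\sH_3)\otimes\ZZ/2$ explicitly ($\lambda_1^2\mapsto\tau^2$, $\lambda_2\mapsto\tau^2+c_2$), and then uses that $\operatorname{Inv}^3(\sH_3,\H_{\ZZ/2})\simeq\operatorname{Inv}^3(\sB S_8,\H_{\ZZ/2})$ comes from cohomology, so that \Cref{cor:hom triv} gives injectivity of the cycle class map on $\sH_3$; compatibility of $\operatorname{cl}$ with pullback then gives injectivity on $\sM_3$. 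Some such detour through a locus where the degree-three invariants are already known is exactly the missing ingredient in your first paragraph.

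For the bound $\dim\H^3_{\text{\'{e}t}}(\sM_3,\ZZ/2)\geq 2$, your candidate negligible class $\beta(\widetilde w_2)$ is speculative: you neither show it is non-zero in $\H^3_{\text{\'{e}t}}(\sM_3,\ZZ/2)$ (which again requires a lower-bound technique you have not supplied) nor that it maps to zero in $\operatorname{Inv}^3$, and testing on the torus chart $T\smallsetminus D_T$ only controls the pullback to $\sDP_{2,\rm ord}^{\rm aux}$, not the class on $\sM_3$ itself. The paper produces the kernel class geometrically: writing $\sU_3=\sM_3\smallsetminus\sH_3$, the isomorphism $\operatorname{Br}(\sM_3)_2\simeq\operatorname{Br}(\sU_3)_2$ from \cite{DLP2} makes the Gysin sequence exact in the form
\[ 0 \longrightarrow \H^1_{\text{\'{e}t}}(\sH_3,\ZZ/2) \overset{\iota_*}{\longrightarrow} \H^3_{\text{\'{e}t}}(\sM_3,\ZZ/2) \overset{j^*}{\longrightarrow} \H^3_{\text{\'{e}t}}(\sU_3,\ZZ/2), \]
so the pushforward of the $2$-torsion class $\gamma\in\H^1_{\text{\'{e}t}}(\sH_3,\ZZ/2)$ is non-zero; since $j^*\iota_*\gamma=0$ and $\operatorname{Inv}^3(\sM_3,\H_{\ZZ/2})\to\operatorname{Inv}^3(\sU_3,\H_{\ZZ/2})$ is an isomorphism, $\iota_*\gamma$ is negligible, giving the second dimension on top of the one coming from $f_3$. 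Without an argument of this kind (or a completed verification for your Bockstein candidate), your proposal establishes only $\dim\H^3_{\text{\'{e}t}}(\sM_3,\ZZ/2)\geq 1$.
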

We need some preliminary results on Chow groups.
\begin{lm}\label{lm:pullback inj}
    The pullback homomorphism $\operatorname{CH}^2(\sM_3)\otimes \ZZ/2 \longrightarrow \operatorname{CH}^2(\sH_3)\otimes\ZZ/2$ is injective.
\end{lm}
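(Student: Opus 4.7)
The plan is to make the pullback
$i^*\colon \operatorname{CH}^2(\sM_3)\otimes \ZZ/2 \to \operatorname{CH}^2(\sH_3)\otimes \ZZ/2$
explicit by presenting both sides as quotients of small polynomial rings, and then to verify injectivity directly on generators.

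First, I would combine the excision exact sequence
\[
\operatorname{CH}^1(\sH_3) \xrightarrow{i_*} \operatorname{CH}^2(\sM_3) \xrightarrow{j^*} \operatorname{CH}^2(\sM_3 \smallsetminus \sH_3) \longrightarrow 0
\]
with the presentation $\sM_3\smallsetminus\sH_3 \simeq [(W\smallsetminus D_W)/\GLt]$ of \Cref{sec:deg two dp}. The right-hand term is a quotient of $\ZZ[c_1,c_2,c_3]$ by the class of the quartic discriminant, a scalar multiple of $c_1$ determined by the $\GLt$-weight of $D_W$; the left-hand term is accessible via the analogous presentation of $\sH_3$ as a quotient stack of an open subset of binary forms of degree eight modulo $\GL_2$, whose Chow ring has been computed in the literature. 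Together these yield an explicit presentation of $\operatorname{CH}^2(\sM_3)\otimes \ZZ/2$.

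Next, I would make $i^*$ concrete on each piece of this presentation. On classes pulled back from $\sB\GLt$ — i.e. Chern classes of the Hodge bundle — $i^*$ is encoded by the description of the Hodge bundle on $\sH_3$ in terms of the tautological rank-two $\GL_2$-bundle. On the image of $i_*$, the self-intersection formula gives $i^*i_*\beta = c_1(N_{\sH_3/\sM_3})\cdot\beta$, so I would identify $c_1(N_{\sH_3/\sM_3})$ in $\operatorname{CH}^1(\sH_3)$ by computing the divisor class of $\sH_3$ inside $\sM_3$ and restricting.

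The main obstacle is the identification of $c_1(N_{\sH_3/\sM_3})$ together with the verification that it is not a zero-divisor modulo $2$ in $\operatorname{CH}^*(\sH_3)\otimes \ZZ/2$. Once these are in hand, injectivity reduces to a linear-algebra check: the restrictions $i^*c_1^2$ and $i^*c_2$ must be linearly independent in $\operatorname{CH}^2(\sH_3)\otimes \ZZ/2$, and multiplication by $c_1(N_{\sH_3/\sM_3})$ must have trivial kernel on $\operatorname{CH}^1(\sH_3)\otimes \ZZ/2$; both are finite-dimensional $\ZZ/2$-vector spaces of small dimension, so these verifications are routine once the explicit presentations above are in place.
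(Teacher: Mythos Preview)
Your proposal is correct and follows essentially the same route as the paper: compute $\operatorname{CH}^2(\sM_3)\otimes\ZZ/2$ via the localization sequence for $\sH_3\hookrightarrow\sM_3$, identify the restriction of the Hodge bundle to $\sH_3$ in terms of the tautological bundles coming from the $\GL_2$-presentation, and then verify injectivity of $i^*$ by a direct linear-algebra check on generators. The paper organizes this as three short lemmas (\Cref{lm:chow hyp}, \Cref{lm:hodge restriction}, \Cref{lm:chow m3}) and then observes that $i^*\lambda_1^2=\tau^2$ and $i^*\lambda_2=\tau^2+c_2$ are linearly independent; your two ``checks'' amount to the same computations, with your second condition (multiplication by $c_1(N_{\sH_3/\sM_3})$ injective on $\operatorname{CH}^1(\sH_3)\otimes\ZZ/2$) being exactly what the paper uses to show $i_*$ is injective in the proof of \Cref{lm:chow m3}. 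One small redundancy: once you know $\operatorname{CH}^2(\sM_3)\otimes\ZZ/2=\ZZ/2\langle\lambda_1^2,\lambda_2\rangle$, the image of $i_*$ is already contained in the span of the Hodge Chern classes, so the first check alone suffices for injectivity of $i^*$.
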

Before proving this lemma, we recall some basic facts about moduli of hyperelliptic curves of genus three. These are mostly taken from \cite{DL-Hyp}.

Let $(\pi\colon C \to S,\iota)$ be a hyperelliptic curve of genus three, with involution $\iota$: then we have a canonical cyclic ramified covering $f\colon C\to P$ of degree two, with $\rho\colon P \to S$ a Severi-Brauer variety of relative dimension one. In particular $f_*\Ocal_C\simeq \Ocal_P\oplus L$, where $f^*{L^{-1}}$ is the ramification divisor. We can define two vector bundles on $\sH_3$, whose Chern classes generate the Chow ring:
\[\cL (\pi) = \rho_*(\omega_\rho^{\otimes 2} \otimes L^{-1}),\quad \cE(\pi)=\rho_*\omega_\rho^{-1}.\]
Set $\tau=c_1(\cL)$ and $c_2=c_2(\cE)$. The following follows from the main theorem of \cite{DL-Hyp}.
\begin{lm}\label{lm:chow hyp}
    We have $\operatorname{CH}^2(\sH_3)\otimes \ZZ/2\simeq \ZZ/2\langle \tau^2,c_2\rangle$
\end{lm}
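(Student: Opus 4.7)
The plan is to derive this identification directly from the integral computation of the Chow ring $\operatorname{CH}^{\bullet}(\sH_3)$ carried out in \cite{DL-Hyp}, reducing its degree-two component modulo $2$.

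First I would recall the presentation of $\sH_3$ used in \cite{DL-Hyp}: hyperelliptic curves of odd genus $g=3$ are parametrized by a quotient stack $[U/G]$, where $U$ is an open subset of the representation of $G$ (a group built from $\Gm$ and $\GL_2$, or $\PGL_2\times\Gm$) given by binary forms of degree $2g+2=8$, and the complement is the discriminant. Equivariant intersection theory on this quotient yields an explicit presentation of $\operatorname{CH}^{\bullet}(\sH_3)$ as a graded ring, generated by $\tau=c_1(\cL)$ and by the Chern classes $c_i(\cE)$ ($i=1,2,3$) of the rank-three bundle $\cE=\rho_*\omega_\rho^{-1}$, modulo explicit relations (coming from the discriminant locus, and from the identification of the determinant of $\cE$ as a multiple of $\tau$ via Grothendieck--Riemann--Roch on the relative Severi--Brauer $\rho\colon P\to S$).

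Next, I would extract the degree-two part. The monomials in the generators of total degree $2$ are $\tau^{2}$, $\tau\, c_1(\cE)$, $c_1(\cE)^{2}$, and $c_2$. The crucial input is the degree-one relation expressing $c_1(\cE)$ as an explicit integer multiple of $\tau$: indeed, $c_1(\rho_*\omega_\rho^{-1})$ is computed by Riemann--Roch on $P\to S$ in terms of $c_1(L)$ and hence of $\tau$, yielding a formula of the shape $c_1(\cE)=a\tau$ for an explicit integer $a$ whose parity can be read off directly from the computation. Once the parity of $a$ is pinned down, the monomials $\tau c_1(\cE)$ and $c_1(\cE)^2$ collapse mod $2$ into $\ZZ/2$-multiples of $\tau^2$, so the only surviving generators of $\operatorname{CH}^2(\sH_3)\otimes\ZZ/2$ are $\tau^{2}$ and $c_2$.

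Finally, I would verify that no further relation in degree $2$ holds mod $2$, i.e.\ that $\tau^2$ and $c_2$ are $\ZZ/2$-linearly independent. This follows from the explicit presentation in \cite{DL-Hyp}: there is no relation of the form $\alpha\tau^2+\beta c_2=0$ with $(\alpha,\beta)\neq(0,0)$ mod $2$, which one checks either by inspection of the ideal of relations (degree $2$ relations involve $c_1(\cE)^2$ and mixed terms that vanish mod $2$, not a pure combination of $\tau^2$ and $c_2$), or by testing on explicit hyperelliptic families on which the two classes evaluate non-trivially and independently. The main obstacle is bookkeeping: matching the notation of \cite{DL-Hyp} with $\tau$ and $c_2$ here, pinning down the parity of the coefficient $a$ in $c_1(\cE)=a\tau$, and making sure the degree-two relations do not impose an unexpected identification between $\tau^2$ and $c_2$ modulo $2$.
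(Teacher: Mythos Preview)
Your proposal is correct and is essentially the same approach as the paper's: the paper simply states that the lemma follows from the main theorem of \cite{DL-Hyp}, and your plan spells out how one extracts the degree-two piece mod $2$ from that presentation. The only difference is granularity; you give the bookkeeping details that the paper omits.
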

We compute the restriction of the Hodge bundle of $\sM_3$ over $\sH_3$.
\begin{lm}\label{lm:hodge restriction}
    Let $\mathbb{E}$ be the Hodge bundle over $\sM_3$. Then $\mathbb{E}|_{\sH_3}=\cE\otimes \cL$.
\end{lm}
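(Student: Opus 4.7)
The plan is to compute $\pi_{\ast}\omega_{C/S}$ by factoring $\pi = \rho \circ f$ and using the projection formula for the degree-two cover $f$, then match the result with $\cE \otimes \cL$ by exploiting that $\omega_\rho^{\otimes 2}\otimes L^{-1}$ has fiberwise degree zero and is therefore pulled back from $S$.

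First I would apply Riemann--Hurwitz to the double cover $f\colon C \to P$ with ramification divisor $R = f^{\ast} L^{-1}$, obtaining $\omega_{C/S} = f^{\ast}(\omega_\rho \otimes L^{-1})$. Pushing forward along $f$ and using $f_{\ast}\Ocal_C = \Ocal_P \oplus L$ together with the projection formula gives
\[ f_{\ast}\omega_{C/S} \;=\; (\omega_\rho \otimes L^{-1}) \oplus \omega_\rho. \]
Pushing forward along $\rho$ and using that $\omega_\rho$ has fiberwise degree $-2$ (so $\rho_{\ast}\omega_\rho = 0$) leaves
\[ \mathbb{E}|_{\sH_3} \;=\; \pi_{\ast}\omega_{C/S} \;=\; \rho_{\ast}(\omega_\rho \otimes L^{-1}). \]

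Next I would identify this with $\cE \otimes \cL$. Since the branch divisor has fiberwise degree $2g+2 = 8$ and its class is $L^{-2}$, the line bundle $L^{-1}$ has fiberwise degree $4$, so $\omega_\rho^{\otimes 2}\otimes L^{-1}$ has fiberwise degree $0$. On each geometric fiber of $\rho$ (a conic, hence $\PP^1$ after étale base change) it is therefore trivial, so by cohomology and base change $\omega_\rho^{\otimes 2}\otimes L^{-1} = \rho^{\ast}\cL$ where $\cL = \rho_{\ast}(\omega_\rho^{\otimes 2}\otimes L^{-1})$. Rewriting $L^{-1} = \omega_\rho^{-2}\otimes \rho^{\ast}\cL$ yields
\[ \omega_\rho \otimes L^{-1} \;=\; \omega_\rho^{-1} \otimes \rho^{\ast}\cL, \]
and applying $\rho_{\ast}$ together with the projection formula gives
\[ \rho_{\ast}(\omega_\rho \otimes L^{-1}) \;=\; \rho_{\ast}(\omega_\rho^{-1}) \otimes \cL \;=\; \cE \otimes \cL, \]
as desired.

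The main technical point to justify carefully is that $\omega_\rho^{\otimes 2}\otimes L^{-1}$ is genuinely the pullback of $\cL$ (and not merely fiberwise trivial); this is where I would invoke cohomology and base change applied to $\rho$, using that the formation of $\rho_{\ast}$ commutes with base change for a line bundle whose fiberwise $h^0$ is constantly equal to $1$. Everything else is bookkeeping with Riemann--Hurwitz and the projection formula, and the Severi--Brauer nature of $P \to S$ causes no issue because all the statements are étale-local on $S$.
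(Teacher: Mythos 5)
Your proposal is correct and follows essentially the same route as the paper: push $\omega_{C/S}$ forward along $f$ using $f_*\Ocal_C=\Ocal_P\oplus L$ to get $\rho_*(\omega_\rho\otimes L^{-1})$, then rewrite $\omega_\rho\otimes L^{-1}=\omega_\rho^{-1}\otimes\rho^*\cL$ via $\rho^*\cL\simeq\omega_\rho^{\otimes 2}\otimes L^{-1}$ and apply the projection formula. Your extra care about $\rho_*\omega_\rho=0$ and the cohomology-and-base-change justification of $\rho^*\cL\simeq\omega_\rho^{\otimes 2}\otimes L^{-1}$ only makes explicit what the paper leaves implicit.
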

\begin{proof}
    Let $\pi\colon C\to S$ be a hyperelliptic curve of genus three, and let $f\colon C\to P$ be the associated ramified covering of degree two. Then we have
    \[f_*\omega_{\pi} \simeq \omega_\rho\otimes f_*\Ocal(R)\simeq \omega_\rho \otimes L^{-1} \otimes (\Ocal\oplus L) \simeq \omega_\rho\oplus (\omega_\rho\otimes L^{-1}).\]
    Recall that the Hodge bundle $\mathbb{E}\to \sM_3$ is defined functorially as
    \[ \mathbb{E}(\pi\colon C\to S)= \pi_*\omega_{\pi}.\]
    We deduce that $\pi_*\omega_\pi \simeq \rho_*(\omega_\rho\otimes L^{-1})$.

    By definition of $\cL$, we have $\rho^*(\cL(\pi)) \simeq \omega_\rho^{\otimes 2}\otimes L^{-1}$, hence
    \[ \pi_*\omega_{\pi}=\rho_*(\omega_\rho\otimes L^{-1})=\rho_*(\omega_{\rho}^{-1}\otimes \rho^*\cL)=\rho_*(\omega_\rho^{-1})\otimes \cL. \]
    This proves the formula.
\end{proof}
Next, we compute $\operatorname{CH}^2(\sM_3)\otimes \ZZ/2$. Set $\lambda_i=c_i(\mathbb{E})$.
\begin{lm}\label{lm:chow m3}
    We have $\operatorname{CH}^2(\sM_3)\otimes\ZZ/2\simeq \ZZ/2\langle \lambda_1^2,\lambda_2\rangle$
\end{lm}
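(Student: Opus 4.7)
The plan is to use the injectivity statement of \Cref{lm:pullback inj}, combined with \Cref{lm:chow hyp} and the restriction formula for the Hodge bundle in \Cref{lm:hodge restriction}, to reduce the computation of $\operatorname{CH}^2(\sM_3)\otimes \ZZ/2$ to a concrete Chern class calculation on $\sH_3$.

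Specifically, I would first apply the Whitney sum formula to $\mathbb{E}|_{\sH_3} \simeq \cE \otimes \cL$, where $\mathrm{rk}\,\cE = 3$ and $\mathrm{rk}\,\cL = 1$, to obtain
\[ \lambda_1|_{\sH_3} = c_1(\cE) + 3\tau, \qquad \lambda_2|_{\sH_3} = c_2 + 2\,c_1(\cE)\,\tau + 3\tau^2. \]
Reducing modulo $2$, this gives $\lambda_1|_{\sH_3} \equiv c_1(\cE) + \tau$ and $\lambda_2|_{\sH_3} \equiv c_2 + \tau^2$ in $\operatorname{CH}^\bullet(\sH_3)\otimes\ZZ/2$.

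The next step is to identify $c_1(\cE)$ modulo $2$ in $\operatorname{Pic}(\sH_3)\otimes \ZZ/2$ using the explicit presentation of $\operatorname{CH}^\bullet(\sH_3)$ from \cite{DL-Hyp}. The outcome one needs is that $c_1(\cE)$ reduces to either $0$ or some class $\neq \tau$ modulo $2$, so that $\lambda_1|_{\sH_3} \not\equiv 0$ and hence $\lambda_1^2|_{\sH_3}$ is a nonzero multiple of $\tau^2$. Combined with $\lambda_2|_{\sH_3} \equiv c_2 + \tau^2$, whose $c_2$-coefficient is nonzero, the pair $\{\lambda_1^2|_{\sH_3}, \lambda_2|_{\sH_3}\}$ is linearly independent, and hence spans the rank-two $\ZZ/2$-module $\operatorname{CH}^2(\sH_3)\otimes \ZZ/2 = \ZZ/2\langle \tau^2, c_2\rangle$ provided by \Cref{lm:chow hyp}.

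Finally, the injectivity of the restriction $\operatorname{CH}^2(\sM_3)\otimes\ZZ/2 \hookrightarrow \operatorname{CH}^2(\sH_3)\otimes\ZZ/2$ from \Cref{lm:pullback inj}, together with the surjectivity onto a rank-two target just established, forces $\operatorname{CH}^2(\sM_3)\otimes\ZZ/2$ to be the free $\ZZ/2$-module of rank two generated by $\lambda_1^2$ and $\lambda_2$. The main obstacle in carrying out this plan is therefore pinning down the class of $c_1(\cE)$ modulo $2$ in $\operatorname{Pic}(\sH_3)\otimes \ZZ/2$; once this is settled via the results of \cite{DL-Hyp}, the rest of the argument is purely formal manipulation of Chern classes and the two lemmas already available.
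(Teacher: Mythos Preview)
Your argument is circular. You invoke \Cref{lm:pullback inj} to bound $\dim_{\ZZ/2}\operatorname{CH}^2(\sM_3)\otimes\ZZ/2$ from above, but in the paper \Cref{lm:pullback inj} is \emph{deduced from} \Cref{lm:chow m3}: its proof begins ``From \Cref{lm:chow m3}, \Cref{lm:chow hyp} and \Cref{lm:hodge restriction} we deduce\ldots''. So you cannot use it here.

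What your computation does legitimately establish is the lower bound: the restrictions $\lambda_1^2|_{\sH_3}$ and $\lambda_2|_{\sH_3}$ are linearly independent in $\operatorname{CH}^2(\sH_3)\otimes\ZZ/2$, hence $\lambda_1^2$ and $\lambda_2$ are independent in $\operatorname{CH}^2(\sM_3)\otimes\ZZ/2$. But you have no independent argument that $\operatorname{CH}^2(\sM_3)\otimes\ZZ/2$ has dimension at most two. The paper obtains this upper bound by a genuinely different route: it quotes the computation of $\operatorname{CH}^2(\sM_3\smallsetminus\sH_3)\otimes\ZZ/2\simeq\ZZ/2\langle\lambda_2\rangle$ from \cite{DLFV}, then runs the localization sequence
\[
\operatorname{CH}^1(\sH_3)\otimes\ZZ/2 \overset{\iota_*}{\longrightarrow} \operatorname{CH}^2(\sM_3)\otimes\ZZ/2 \longrightarrow \operatorname{CH}^2(\sM_3\smallsetminus\sH_3)\otimes\ZZ/2 \longrightarrow 0,
\]
identifies the pushforward $\iota_*\tau = [\sH_3]\cdot\lambda_1 = 9\lambda_1^2 \equiv \lambda_1^2$, and checks it is nonzero via $\iota^*\lambda_1^2=\tau^2\neq 0$. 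The external input from \cite{DLFV} is what your plan is missing; restriction to $\sH_3$ alone cannot see whether $\operatorname{CH}^2(\sM_3)\otimes\ZZ/2$ contains classes that die on the hyperelliptic locus.
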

\begin{proof}
    The main theorem of \cite{DLFV} tells us that $$\operatorname{CH}^2(\sM_3\smallsetminus\sH_3)\otimes \ZZ/2\simeq \ZZ/2\langle \lambda_2 \rangle.$$
    The localization exact sequence reads 
    \[ \operatorname{CH}^1(\sH_3)\otimes\ZZ/2\overset{\iota}{\longrightarrow }\operatorname{CH}^2(\sM_3)\otimes\ZZ/2 \longrightarrow \operatorname{CH}^2(\sU_3)\otimes\ZZ/2 \longrightarrow 0 \]
    From \cite{DL-Hyp} it follows that $\operatorname{CH}^1(\sH_3)\otimes\ZZ/2\simeq \ZZ/2\cdot\tau$.
     We claim that $\iota$ is injective, which would conclude the proof. 
     
     First, observe that $\iota^*\lambda_1=c_1(\mathbb{E})=\tau$, hence $\iota_*\tau=\iota_*\iota^*\lambda_1=[\sH_3]\cdot \lambda_1$. As $[\sH_3]=9\lambda_1=\lambda_1$, we deduce that $\iota$ is injective if and only if $\lambda_1^2$ is not zero in $\operatorname{CH}^2(\sM_3)\otimes\ZZ/2$. But $\iota^*\lambda_1^2=\tau_1^2\neq 0$, hence the claim is proved.
\end{proof}
\begin{proof}[Proof of \Cref{lm:pullback inj}]
    From \Cref{lm:chow m3}, \Cref{lm:chow hyp} and \Cref{lm:hodge restriction} we deduce that the pullback homomorphism $\iota$, regarded as a morphism of $\ZZ/2$-vector spaces, is
    \[ \iota:\ZZ/2\langle \lambda_1^2,\lambda_2\rangle \longrightarrow \ZZ/2\langle \tau^2,c_2\rangle, \quad \iota^*\lambda_1^2=\tau^2,\quad \iota^*\lambda_2=\tau^2+c_2. \]
    Injectivity of this map is clear.
\end{proof}
\begin{proof}[Proof of \Cref{thm:degree three}]
Let $S_n$ be the symmetric group of order $n$: the cohomological invariants of $\sB S_n$ come from cohomology. The Weierstrass divisor of the universal hyperelliptic curve over $\sH_3$ induces a classifying map $\sH_3 \to \sB S_8$, and the induced pullback homomorphism $\operatorname{Inv}^3(\sB S_8, \ZZ/2) \to \operatorname{Inv}^3(\sH_3, \ZZ/2)$ is an isomorphism \cite{DLP}*{Main Theorem B}. By \Cref{cor:hom triv} we deduce that $\operatorname{CH}^2(\sH_3)\otimes\ZZ/2 \to \H_{\text{\'{e}t}}^4(\sH_3,\ZZ/2) $ is injective, hence by \Cref{lm:pullback inj} the composition
\[ \operatorname{CH}^2(\sM_3)\otimes \ZZ/2 \overset{\iota^*}{\longrightarrow }\operatorname{CH}^2(\sH_3)\otimes\ZZ/2 \overset{\operatorname{cl}}{\longrightarrow}\H_{\text{\'{e}t}}^4(\sH_3,\ZZ/2) \]
is injective as well.
As $\operatorname{cl}_{\sH_3}\circ \iota^* = \iota^*\circ \operatorname{cl}_{\sM_3}$, we deduce that $\operatorname{cl}_{\sM_3}$ is injective for codimension two cycles. By \Cref{cor:hom triv}, we deduce that the degree three invariants of $\sM_3$ come from cohomology.

Next, we prove that $\dim \H^3_{\text{\'{e}t}}(\sM_3,\ZZ/2)\geq 2$.
    The pullback homomorphism $$\H^2_{\text{\'{e}t}}(\sM_3,\ZZ/2) \simeq \operatorname{Br}(\sM_3)_2 \longrightarrow \operatorname{Br}(\sU_3)_2\simeq \H^2_{\text{\'{e}t}}(\sU_3,\ZZ/2)$$ is an isomorphism \cite{DLP2}*{Proposition 3.8}, hence from the long exact sequence in \'{e}tale cohomology we deduce the exactness of 
    \[ 0 \longrightarrow \H_{\text{\'{e}t}}^1(\sH_3,\ZZ/2) \overset{\iota_*}{\longrightarrow} \H_{\text{\'{e}t}}^3(\sM_3,\ZZ/2) \overset{j^*}{\longrightarrow} \H_{\text{\'{e}t}}^3(\sU_3,\ZZ/2). \]
    The first term coincides with the $2$-torsion in the Picard group of $\sH_3$, hence it is isomorphic to $\ZZ/2\cdot \gamma$. On the other hand, we have a commutative diagram
    \[ 
    \begin{tikzcd}
        \H_{\text{\'{e}t}}^3(\sM_3,\ZZ/2) \ar[r,"j^*"] \ar[d] & \H_{\text{\'{e}t}}^3(\sU_3,\ZZ/2) \ar[d] \\
        \operatorname{Inv}^3(\sM_3,\H_{\ZZ/2}) \ar[r, "\simeq"] & \operatorname{Inv}^3(\sU_3,\H_{\ZZ/2}).
    \end{tikzcd}
    \]
    As $j^*\iota_*(\gamma)=0$, we deduce that $\iota_*\alpha$ is sent to zero in $\operatorname{Inv}^3(\sM_3,\H_{\ZZ/2})$. As the latter has dimension $\geq 1$, we deduce that $\dim \H_{\text{\'{e}t}}^3(\sM_3,\ZZ/2) \geq 2$.
\end{proof}

\bibliographystyle{halpha-abbrv}
\bibliography{bibliography.bib}

\end{document}